\DeclareSymbolFont{AMSb}{U}{msb}{m}{n} 			
\documentclass[12pt,oneside,noamsfonts]{amsart}



\usepackage[utf8]{inputenx}  
\usepackage[T1]{fontenc}     

\usepackage[USenglish]{babel}


\usepackage{multirow}

\usepackage{stmaryrd}

\usepackage[charter,expert,sfscaled,cal=cmcal]{mathdesign}  
\renewcommand{\in}{\smallin}
\renewcommand{\notin}{\notsmallin}
\renewcommand{\setminus}{\smallsetminus}

\usepackage[scaled=.96,sups]{XCharter}            
\linespread{1.04}


\usepackage[kerning]{microtype}

\usepackage{mathtools}       
\usepackage{enumitem}       
\usepackage{url}             

\usepackage{mathscinet}			

\usepackage{todonotes}

{\newlength\figurewidth}
\newcommand\fdash{\settowidth\figurewidth{9}\raisebox{0.6ex}{\makebox[\figurewidth]{\hrulefill}}}


\usepackage{tikz}
\usetikzlibrary{arrows,positioning,fit,shapes,intersections,calc,patterns}

\usepackage{nicefrac}

\newtheorem{theorem}{Theorem}
\newtheorem{proposition}[theorem]{Proposition}
\newtheorem{lemma}[theorem]{Lemma}
\newtheorem{case}{Case}

\newtheorem*{claim*}{Claim}
\newcommand{\claimdone}{\hfill$\blacksquare$\par}

\newtheorem{corollary}[theorem]{Corollary}
\newtheorem{observation}[theorem]{Observation}
\theoremstyle{definition}
\newtheorem{definition}[theorem]{Definition}

\newtheorem{question}[theorem]{Question}


\newcommand{\pw}[1]{\mathcal{P}\left(#1\right)}  
\newcommand{\fin}{\textbf{\textup{fin}}}          
\newcommand{\iter}{\mathbin{*}}          
\newcommand{\os}{\mleft\{ \,}             
\newcommand{\cs}{\, \mright\}}             
\usepackage{mleftright}                       
\newcommand{\card}[1]{\mleft| #1 \mright|}    
\DeclareMathOperator{\non}{non}

\DeclareMathOperator{\cov}{cov}
\DeclareMathOperator{\cof}{cof}
\DeclareMathOperator{\tr}{tr}


\newcommand{\conv}{\textbf{\textup{conv}}}          
\newcommand{\nwd}{\textbf{\textup{nwd}}}          


\newcommand{\conc}{^\smallfrown}           
\renewcommand{\restriction}{\mathbin{\!\upharpoonright}}   

\renewcommand{\mid}{\shortmid}             

\title[Indestructible ultrafilters]{HL ideals and Sacks indestructible ultrafilters}

\author{David Chodounský}
\address{
Institute of Discrete Mathematics and Geometry, Technische Universität Wien (TU Wien) \and
Institute of Mathematics of the Czech Academy of Sciences,
Žitná~25, \linebreak[2] Praha~1, Czech Republic}
\email{chodounsky@math.cas.cz}

\author{Osvaldo Guzmán} 
\address{Centro de Ciencias Matem\'{a}ticas, Universidad Nacional Aut\'onoma de M\'exico, Apartado Postal 61\fdash3, Xangari, 58089, Morelia, Michoac\'{a}n, M\'{e}xico}
\email{oguzman@matmor.unam.mx}

\author{Michael Hrušák}
\address{Centro de Ciencias Matem\'{a}ticas, Universidad Nacional Aut\'onoma de M\'exico, Apartado Postal 61\fdash3, Xangari, 58089, Morelia, Michoac\'{a}n, M\'{e}xico}
\email{michael@matmor.unam.mx}

\subjclass[2010]{Primary: 03E05, 03E17, 03E15, 03E35}
\keywords{Halpern-Läuchli, ideal, ultrafilter, Sacks forcing}

 \thanks{
 First author was partially supported by the Austrian Science Fund (FWF) P33420. 
 Research of the second and third authors was partially supported  by a PAPIIT grant IN104220 and CONACyT grant A1-S-16164.}


\begin{document}

\begin{abstract}
	We study ultrafilters on countable sets and reaping families 
	which are indestructible by Sacks forcing.
	We deal with the combinatorial characterization of such families and 
	we prove that every reaping family of size smaller than the continuum is Sacks indestructible.
	We prove that complements of many definable ideals are Sacks reaping indestructible,
	with one notable exception, the complement of the ideal $\mathcal Z$ of sets of asymptotic density zero. 
	We investigate the existence of Sacks indestructible ultrafilters and prove 
	that every Sacks indestructible ultrafilter is a $\mathcal Z$-ultrafilter.
\end{abstract}

\maketitle


\noindent
Results concerning preservation of ultrafilters in generic extensions 
are a well developed area of set theory of the reals. 
It turns out the most general property of this kind is to be preserved 
(as an ultrafilter base) by Sacks forcing. 
The main question guiding our interest in this topic is existence of such ultrafilters.

\begin{question}[Miller~\cite{Miller-forcing}]
	Does $\mathsf{ZFC}$ prove the existence of a Sacks-in\-de\-struc\-tible ultrafilter?
\end{question}

A negative answer to this question would imply that there is a model $V$ such that 
there is no extension $W \supset V$ with new reals and which 
would contain an ultrafilter on $\omega$ generated by an ultrafilter from~$V$.

\section{Introduction and Notation}

Our notation and terminology is fairly standard, 
we start by giving a brief overview of notions used in this paper. 
We will also give an introduction to the topic and recall 
the most relevant results in the area.

An \emph{ideal} on $\omega$ is a set $\mathcal I \subset \pw{\omega}$ closed under subsets 
and finite unions. We almost always assume that each ideal is proper and contains all finite sets; 
$\fin \subseteq \mathcal I \neq \pw{\omega}$. 
For $\mathcal X \subset \pw{\omega}$ we will denote the dual family by 
$\mathcal X^* = \os \omega \setminus X \mid X \in \mathcal X \cs$. 
A set $\mathcal F \subset \pw{\omega}$ is a \emph{filter} if $\mathcal F^*$ is an ideal. 
We say that $\mathcal B \subseteq \mathcal F$ is a \emph{base of a filter} $\mathcal F$ 
if for each $F \in \mathcal F$ exists $B \in \mathcal B$ such that $B \subseteq F$. 
A filter is an \emph{ultrafilter} if it is a filter maximal with respect to inclusion. 
Given an ideal $\mathcal I$, we define the \emph{coideal} $\mathcal I^+ = \pw{\omega} \setminus \mathcal I$. 
When $\mathcal F = \mathcal I^*$, we also use $\mathcal F^+$ to denote the coideal $\mathcal I^+$. 
We will often identify subsets of $\omega$ with their characteristic function in~$2^\omega$. 
Consequently we will treat subsets of $\pw{\omega}$ also as subspaces of the Cantor space.

If $A, R \in {[\omega]}^\omega$, we say that $R$ \emph{reaps} $A$ if either $R \subset A$ or $R \cap A = \emptyset$. 
We say that $R$ \emph{splits} $A$ if both sets $A \cap R$ and $A \setminus R$ are infinite. 
Given $\mathcal R \subset \pw{\omega}$ we say that $\mathcal R$ is a \emph{reaping family} if 
for every $A \in {[\omega]}^\omega $ exists $R \in \mathcal R$ which reaps $A$. 
The minimal size of a reaping is the cardinal invariant
\[\mathfrak r = \min \os \card{\mathcal R} \mid \mathcal R \text{ is a reaping family} \cs.\]
Notice that every coideal is a reaping family and a filter $\mathcal U$ is an ultrafilter 
iff it has a base which is a reaping family iff every base of $\mathcal U$ is a reaping family. 
The minimal cardinality of an ultrafilter base is the cardinal 
\[\mathfrak u = \min \os \card{\mathcal B} \mid \mathcal B \text{ is a reaping filter base} \cs.\]
We have $\mathfrak r \leq \mathfrak u$. 
For an ultrafilter $\mathcal U$ the \emph{character} $\chi(\mathcal U)$ is defined as the minimal cardinality 
of its base;
\[\chi(\mathcal U) = \min \os \card{\mathcal B} \mid \mathcal B \text{ is a base of } \mathcal U \cs.\]
For an ideal $\mathcal I$ (not necessarily on $\omega$) we denote by $\cof(\mathcal I)$ 
the cofinality of the poset $(\mathcal I, \subseteq)$. 
For an ultrafilter we have $\chi(\mathcal U) = \cof(\mathcal U^*)$.

We will also need some of the numerous other cardinal characteristics of the continuum~\cite{Blass-cardinals}. 
We say that a family of functions $\mathcal D \subseteq \omega^\omega$ is \emph{dominating} 
if for every $f \in \omega^\omega$ there is $g \in \mathcal D$ such that $f < g$. 
The dominating number is defined as
\[\mathfrak d = \min \os \card{\mathcal D} \mid \mathcal D \text{ is a dominating family} \cs.\]
We say that $P \in {[\omega]}^\omega$ is a \emph{pseudo-intersection} of a family $\mathcal X \subset {[\omega]}^\omega$ 
if $P \setminus X$ is finite for each $X \in \mathcal X$. 
The pseudo-intersection number is defined as
\[\mathfrak p = \min \os \card{\mathcal B} \mid \mathcal B \text{ is filter base without any pseudo-intersection} \cs.\]

We say that an ideal $\mathcal I$ is a \emph{P$^+$-ideal} (or a \emph{P$^+$-filter}) if for every system
$\os X_n \in \mathcal I^+ \mid n \in \omega \cs$ there exists 
$Y = \os y_n \in {[X_n]}^{{<}\omega} \mid n \in \omega\cs$ such that $\bigcup Y \in \mathcal I^+$. 
All F$_\sigmaup$ ideals on $\omega$ are P$^+$~\cite{Mazur}. 
If $\mathcal U$ is a P$^+$-ultrafilter, then we say that $\mathcal U$ is a \emph{P-ultrafilter}. 
A filter $\mathcal F$ is a \emph{rare-filter} if for every interval partition $E$ of $\omega$ 
there exist $F \in \mathcal F$ which is a selector for $E$. 
Rare ultrafilters are also called \emph{Q-ultrafilters}. 
An ultrafilter $\mathcal U$ which is simultaneously both a P-ultrafilter and a Q-ultrafilter 
is called \emph{selective} or \emph{Ramsey}. 
Such ultrafilter has the property that for every $c \colon {[\omega]}^2 \to 2$ there is 
$U \in \mathcal U$ such that $c \restriction {[U]}^2$ is constant.
The existence of none of these ultrafilters is provable in $\mathsf{ZFC}$,
this was first proved by Kunen~\cite{Kunen-points} for selective ultrafilters, 
Miller~\cite{Miller-Q-points} for Q-ultrafilters and Shelah~\cite{Wimmers} for P-ultrafilters,
see also~\cite{silver-model}.

The \emph{Katětov order} introduced in~\cite{Katetov} is a powerful tool for classifying relations 
among ideals and filters.
Let $\mathcal I$ be a family of subsets of a countable set $X$ and
$\mathcal J$  be a family of subsets of a countable set $Y$.
We say that a function $f \colon Y \to X$ is a \emph{Katětov morphism}
if $f^{-1}[A] \in \mathcal J$ for every $A \in \mathcal I$.
If there exists such Katětov morphism, we write $\mathcal I \leq_{\mathrm K} \mathcal J$
($\mathcal I$ is \emph{Katětov below} $\mathcal J$).
It is easy to see that the Katětov order $\leq_{\mathrm K}$
is indeed a reflexive and transitive relation. 

Several definable ideals on countable sets
will play an important role in our considerations. 
Let us give here the definitions of the ideals we
will need. 
\begin{itemize}[labelsep=2ex]
	\item[$\fin$] The ideal $\fin$ consists of finite subsets of $\omega$. 
	\item[$\fin \times \fin$]
	The ideal $\fin \times \fin \subset \pw{\omega^2}$ consist of all subsets of
	$\omega \times \omega$ which have only finite intersection with all but finitely many
	columns.
	\item[$\nwd$] The ideal $\nwd$ consists of nowhere dense subsets of the rationals~$\mathbb Q$. 
	\item[$\conv$] The ideal $\conv \subset \pw{\mathbb Q}$ is generated by all converging sequences of rational numbers. 
	\item[$\mathcal Z$] The \emph{density zero} ideal $\mathcal Z$ consist of all $A \subset \omega$
	such $\lim_{n \to \infty} {\frac{\card{A\cap n}}{n}} = 0$.
	Equivalently, $A \in \mathcal Z$ iff 
	\[\lim_{n \to \infty} {\frac{\card{A\cap [2^n,2^{n+1})}}{2^n}} = 0.\]
	\item[$\mathcal{ED}$] The ideal $\mathcal{ED}$ on $\omega^2$ is a sub-ideal of the ideal $\fin \times \fin$;
	it is generated by the columns and by graphs of functions.
	I.e.\ a set is in $\mathcal{ED}$ if the size of its intersection
	with all but finitely columns is bounded by some number $n \in \omega$.
	\item[$\mathcal I_{{\nicefrac{1}{n}}}$] The \emph{summable ideal} $\mathcal I_{{\nicefrac{1}{n}}}$ consists of all sets $A \subset \omega$
	such that \[\sum \os \frac{1}{n+1} \mid n \in A\cs < \infty.\]
	\item[$\mathcal G_{\mathrm c}$] The ideal $\mathcal G_{\mathrm c} \subset \pw{{[\omega]}^2}$ is defined 
	by $E \in \mathcal G_{\mathrm c}$ iff the graph $(\omega, E)$ does not contain 
	an infinite complete subgraph. 
	\item[$\mathcal{SC}$] The ideal $\mathcal{SC} \subset \pw{\omega}$ is generated by SC-sets. 
	A set $A \subset \omega$ is a \emph{SC-set} if for each $n \in \omega$ 
	the set $\os \os a, b\cs \in {[A]}^2 \mid \card{a-b} < n \cs$ is finite.
	Equivalently, $A \in \mathcal{SC}$ if there exists $k \in \omega$ 
	such that for any given $\ell \in \omega$ there are only finitely many 
	intervals $I$ of length $\ell$ such that $k < \card{I \cap A}$. 
\end{itemize}
The ideals $\fin$, $\mathcal{ED}$ and $\mathcal I_{{\nicefrac{1}{n}}}$ are F$_\sigmaup$, $\nwd$, $\mathcal Z$ and  $\mathcal{SC}$ are F$_{\sigmaup\deltaup}$, while $\fin \times \fin$ and $\conv$ are F$_{\sigmaup\deltaup\sigmaup}$ and the ideal $\mathcal G_{\mathrm c}$ is co-analytic.

Let us briefly mention also some ideals on the Cantor space. 
The ideal of meager subsets of $2^\omega$ is denoted~$\mathscr M$, 
the ideal of Lebesgue null subsets is denoted~$\mathscr N$. 
We denote the Lebesgue measure on $2^\omega$ by~$\muup$.
Every ideal on the Cantor space has a naturally associated 
trace ideal.
For $a \subseteq 2^{<\omega}$ define
$\piup(a) = \os x \in 2^\omega \mid \left(\exists^\infty n\in \omega \right) \, x\restriction n \in a \cs$.
For every $a$ the set $\piup(a)$ is G$_\deltaup$ and every G$_\deltaup$ set is of this form.
Trace ideals derived from ideals on the Cantor space or the Baire space were 
defined independently by several authors, notably 
Brendle and Yatabe~\cite{Brendle-Yatabe}, Thümmel~\cite{Thuemmel-thesis}, and
Hrušák and Zapletal~\cite{Hrusak-Zapletal}.
	For an ideal $\mathscr I$ on $2^\omega$ the \emph{trace ideal}
	$\tr(\mathscr I)$ of $\mathscr I$ is defined by $a \in \tr(\mathscr I)$
	iff $\piup(a) \in \mathscr I$.
Only the trace ideal $\tr(\mathscr N)$ will feature in this paper. 
The following notions from~\cite{Piotr-etal} will be useful.
For $a \subseteq 2^{{<}\omega}$ let $M(a)$ be the set of minimal elements of $a$. 
Define $\varphi \colon 2^{{<}\omega} \to \mathbb R$ by 
\[\varphi (a) = \sum \os 2^{-\card{s}} \mid s \in M(a)\cs = 
\sup\os \sum_{s \in b} 2^{-\card{s}} \mid b \text{ is an antichain in } a \cs.\]
Let $\overline\varphi(a) = \lim \os a \setminus 2^{{<}n} \mid n \in \omega\cs$. 
Then  $\muup(\piup(a)) = \overline\varphi(a)$ 
and $\tr(\mathscr N) = \os a \subseteq 2^{{<}\omega} \mid \overline\varphi(a) = 0\cs$.

The Katětov order on the ideals we defined is fairly well understood, 
see~\cite{Hrusak-order_on_ideals,Hrusak-Hernandez*2}. 
In the following diagram the arrows indicate the directions of the 
existing Katětov morphisms.
Moreover, all the provable Katětov relations are indicated.
\begin{figure}[h]
\begin{tikzpicture}[node distance=2.5em,line cap=round,rounded corners,thick,>=stealth']
\tikzstyle{bg_arrow}=[-,shorten <=.5em,shorten >=.5em,line width=10pt,draw=white];

  \node (conv)  {$\conv$};
  \node (ED) [right= of conv] {$\mathcal{ED}$};
  \node (finxfin) [above = of conv] {$\fin \times \fin$};
  \node (gc) [above= of finxfin] {$\mathcal G_{\mathrm c}$};
  \node (SC) [above = of ED] {$\mathcal{SC}$};  
  \node (nwd) [left= of finxfin] {$\nwd$};
  \node (sum) [above right=0.4em and 2.2em of ED] {$\mathcal{I}_{\nicefrac{1}{n}}$};
  \node (trN) [above= of sum] {$\tr(\mathscr N)$};
  \node (dens) [above= of SC] {$\mathcal{Z}$};
  
\begin{scope}[<-]
  \path
  (conv) edge (nwd)
  (conv) edge (finxfin)
  (conv) edge (SC)
  (ED) edge (sum)
  (ED) edge (SC)
  (finxfin) edge (gc)
  (SC) edge (dens)
  (sum) edge (trN)
  (trN) edge (dens);
  \path[bg_arrow]
  ;
  \path
  (ED) edge (finxfin);
\end{scope}
\end{tikzpicture}
\end{figure}


Given an ideal $\mathcal I$ on a set $X$, Baumgartner~\cite{Baumgartner-ultrafilters} defined the notion 
of an $\mathcal I$-ultrafilter. This notion can be easily formulated using the Katětov order. 
An ultrafilter on $\omega$ is an \emph{$\mathcal I$-ultrafilter} iff 
$\mathcal I \nleq_{\mathrm K} \mathcal U^*$. 
Many standard properties of ultrafilters can be expressed in this way, 
e.g.\ selective ultrafilters are exactly $\mathcal{ED}$-ultrafilters, 
P-ultrafilters are exactly $\fin\times\fin$-ultrafilters, and equivalently $\conv$-ultrafilters and so on, 
the paper~\cite{Bendle-Flaskova} contains a nice overview. 
The existence these classes of ultrafilters is typically not provable in~$\mathsf{ZFC}$,
some of the strongest results is this direction are the consistency that there 
might be no $\nwd$-ultrafilters~\cite{Shelah-no-nwd-U}, 
and that there is a model with no $\mathcal I$-ultrafilters for any F$_\sigmaup$ ideal~\cite{Cancino}. 
On the other hand, there are ideals $\mathcal I$ for which the existence of 
$\mathcal I$-ultrafilters is provable in~$\mathsf{ZFC}$, see~\cite{Guzman-Hrusak-pospisil}. 

The phenomenon of central interest of this paper is destructibility and indestructibility of ultrafilters. 
Let $V$ be model of set theory, $\mathcal U \in V$ be an ultrafilter on $\omega$, and let 
$W$ be some extension of $V$. If $2^\omega \cap V = 2^\omega \cap W$, then nothing interesting is going on 
(from this point of view) 
and $\mathcal U$ is an ultrafilter in $W$. On the other hand, 
if $W$ contains new reals, $\mathcal U$ is no longer closed with respect to supersets in $W$ 
and is not even a filter. 
Therefore we are rather interested in the filter generated by $\mathcal U$ in $W$.
If $\mathcal U$ generates an ultrafilter in $W$, we say that $\mathcal U$ is preserved in the extension, 
otherwise we say that it is destroyed. 
The ultrafilter $\mathcal U$ is preserved in $W$ if and only if $\mathcal U$ is a reaping family 
in $W$.

Given a forcing $\mathbf P$, we say that an ultrafilter $\mathcal U \in V$ is 
\emph{$\mathbf P$-indestructible} if $\mathcal U$ is preserved in every generic extension of $V$ via $\mathbf P$. 
Otherwise, if $\mathcal U$ is always destroyed, we say that $\mathbf P$ \emph{destroys} $\mathcal U$. 
If the forcing notion $\mathbf P$ does not add reals, then every ultrafilter is $\mathbf P$-indestructible.
If $\mathbf P$ adds an independent real, that is when $V \cap \pw{\omega}$ is not a reaping
family in the generic extension, then there are no $\mathbf P$-indestructible ultrafilters. 
Miller~\cite{Miller(s)} noticed that if $\mathcal U \leq_{\mathrm K} \mathcal V$ are ultrafilters 
and $\mathcal V$ is $\mathbf P$-indestructible, then so $\mathcal U$.
An ideal $\mathcal I$ was constructed in~\cite{Bartoszyski_etal} 
such that whenever an ultrafilter $\mathcal U$ is disjoint with $\mathcal I$, 
then $\mathcal U$ is destroyed in every extension which contains new reals. 
We will improve this result by proving Theorem~\ref{thm:Z-isHL} which states that the ideal of 
density zero sets $\mathcal Z$ is also such an ideal.

P-ultrafilters enjoy a special position among ultrafilters when the question of indestructibility arises. 
Baumgartner and Laver~\cite{Baumgartner-Laver} proved that selective ultrafilters are Sacks forcing-indestructible. 
Later Miller~\cite{Miller-forcing} proved that P-ultrafilters are precisely Miller forcing-in\-de\-structible ultrafilters, 
and Blass noticed that this implies that P-ultrafilters are also Sacks-indestructible. 
In fact, it turns out that Sacks-in\-des\-tructibi\-lity is provably the weakest among these properties~\cite{Miller(s)}, 
see Theorem~\ref{thm:HL-char} of this paper. 
If a forcing $\mathbf P$ adds an unbounded real (i.e.\ $V \cap \omega^\omega$ is not a dominating family in the generic extension),  
then every $\mathbf P$-indestructible ultrafilter has to be a P-ultrafilter. 
If the forcing $\mathbf P$ is $\omega^\omega$-bounding (i.e.\ does not add an unbounded real) 
and $\mathcal U$ is a $\mathbf P$-indestructible ultrafilter, then $\mathcal U \times \mathcal U$ 
is also a $\mathbf P$-indestructible ultrafilter and not a P-ultrafilter, see~\cite{Miller-forcing}.
Let us also remark that if $\mathbf P$ is a proper forcing and 
$\mathcal U$ is a $\mathbf P$-indestructible ultrafilter, 
then $\mathcal U$ generates a P-ultrafilter in the generic extension. 
Moreover, preserving P-ultrafilters is a property which behaves well with 
countable support iteration of proper posets.  

Preservation of ultrafilters and especially P-ultrafilters is extensively studied in the literature. 
Zapletal proved~\cite{Zapletal-preservingP} that for proper definable forcing notions preserving 
P-ultrafilters is equivalent to the weak Laver property and not adding independent reals. 
For general posets just the forward implication needs to hold, 
see the paper of Zapletal for the precise formulation of the result. 
Similarly, preserving selective ultrafilters is for definable posets equivalent to 
the $\omega^\omega$-bounding property and not adding independent reals, see~\cite{Zapletal-idealized}. 
When it comes to destroying ultrafilters, the harm may be of varying kind. 
A forcing $\mathbf P$ diagonalizes an ultrafilter $\mathcal U$ if it adds a pseudo-intersection 
of $\mathcal U$. 
Mildenberger~\cite{Mildenberger-diagonalizing} showed that consistently
there is a forcing notion $\mathbf P$ which diagonalizes certain ultrafilter 
while it preserves an P-ultrafilter at the same time.

This paper will deal extensively with subtrees and subsets of the binary tree $2^{{<}\omega}$. 
We adopt a fairly standard terminology. 
A \emph{tree} $p$ will typically be an initial subset of $(2^{{<}\omega}, \subset)$ without maximal elements. 
For $n \in \omega$ the \emph{level} $n$ of $p$ is the set $p \cap 2^n$. 
For $s \in p$ we denote $p_s = \{ t \in p \mid t \text{ is compatible with } s \}$. 
For $A \subset \omega$ we let $p \restriction A = \os s \in p \cap 2^n \mid n \in A \cs$. 
The set of all branches of $p$ is denoted 
$[p] = \os x \in 2^\omega \mid x \restriction n \in p \text{ for all } n \in \omega \cs$. 
For $s \in 2^{{<}\omega}$ we let $[s] = \os x \in 2^\omega \mid s \subset x\cs$. 

A tree $p \subseteq 2^{{<}\omega}$ is a \emph{perfect tree} if for every 
$s \in p$ there exist $s_0, s_1 \in p_s$ such that $s_0$ and $s_1$ are incompatible. 
A node $s \in p$ is a \emph{stem} of $p$ if $s$ is the maximal node such that $p = p_s$. 
The set of all perfect subtrees of the binary tree is denoted by $\mathbf S$. 
The set $\mathbf S$ equipped with inclusion order is called the \emph{Sacks forcing}. 
This forcing notion was introduced by Sacks~\cite{Sacks}. 
The forcing adds a generic real defined as $g = \bigcap \os [p] \mid p \in G \cs$ 
where $G$ is the generic filter on $\mathbf S$. 
Every generic extension $V[G]$ via the Sacks forcing 
has a minimal degree of constructibility; whenever $W$ is a model of $\mathsf{ZFC}$ 
such that $V \subseteq W \subseteq V[G]$, then either $W = V$ or $W = V[G]$. 
Another prominent property of the Sacks forcing is the so called Sacks property. 
As we will actually not use the definition of this property, 
let us just state that the Sacks property of a given forcing implies that 
the forcing is $\omega^\omega$-bounding, we refer the reader to~\cite{Sacks} for details. 

Our terminology will prominently reference the Halpern--Läuchli theorem for trees. 
However, only one special consequence of the full Halpern--Läuchli theorem 
is relevant for our work.
\begin{proposition}\label{prop:HLthm}
	Given a partition $c \colon 2^{{<}\omega} \to 2$, there exists a perfect tree 
	$p \subseteq 2^{{<}\omega}$ and an infinite set $A \in {[\omega]}^\omega$ 
	such that $c$ is constant on $p \restriction A$.
\end{proposition}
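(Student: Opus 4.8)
The plan is to read this off from the one-tree case of the Halpern--Läuchli theorem. That theorem says that for every finite colouring of $2^{{<}\omega}$ there is an infinite set $A = \{a_0 < a_1 < \cdots\} \subseteq \omega$ and a \emph{strong subtree} $S \subseteq 2^{{<}\omega}$ with level set $A$ --- meaning that $S \cap 2^{a_k}$ has $2^k$ elements and each of them has exactly two incomparable successors in $S \cap 2^{a_{k+1}}$ --- on which the colouring is constant. Given $c$, I would apply this and let $p$ be the downward closure $\{ t \in 2^{{<}\omega} \mid (\exists s \in S)\, t \subseteq s \}$ of the resulting monochromatic $S$.

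Then two routine verifications finish the proof. First, $p$ is a perfect tree, because any $t \in p$ is an initial segment of some $s \in S$, and $s$ has two incomparable extensions in $S \subseteq p$; it also has no maximal elements, since $S$ has arbitrarily long nodes. Second, since $S$ is closed under taking initial segments of lengths lying in $A$, one has $p \cap 2^{a_k} = S \cap 2^{a_k}$ for every $k$, so $p \restriction A = S$ and $c$ is constant on $p \restriction A$, as required. (One could instead quote Milliken's tree theorem, or the Carlson--Simpson infinitary Hales--Jewett theorem for the two-letter alphabet, and extract $p$ and $A$ in the same way.)

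If one prefers a self-contained argument, the statement is proved by a fusion: one builds a $\subseteq$-increasing sequence of finite approximations, the $k$-th of which fixes levels $a_0 < \cdots < a_k$ and a finite tree in which every node at level $a_j$ ($j<k$) has two extensions at level $a_{j+1}$, and in which $c$ is constant on each single level $a_0, \dots, a_k$ (possibly with different values on different levels); one then takes the union $S$, lets $p$ be its downward closure, and thins $\{a_k \mid k \in \omega\}$ to an infinite $A$ along which the at most two constant values agree. The construction hinges on a single extension step: given the top antichain $s_1, \dots, s_m$ of the current approximation, produce a common level $\ell$ above it and, over each $s_j$, at least two extensions lying at level $\ell$, with all of these new nodes (there are at least $2m$ of them) receiving one and the same colour. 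I expect this matched-levels pigeonhole to be the main obstacle, and indeed it is false as stated --- $c$ might take one value on every node extending $s_1$ and the other on every node extending $s_2$ --- so the step must be preceded by passing to suitable perfect subtrees of the cones above the $s_j$ while keeping track of which colour persists (or, in the degenerate case, one collapses onto a single cone and is already done). This is precisely the Halpern--Läuchli combinatorial lemma; it is classical but genuinely nontrivial, proved by an induction on the number of cones together with a careful accounting of which levels remain available, and reproving it is the only real work in a from-scratch treatment.
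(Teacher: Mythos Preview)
Your proposal is correct and matches the paper's treatment: the paper does not give a proof of this proposition at all, but simply states it as a special consequence of the Halpern--L\"auchli theorem and refers the reader to the original paper and to Laver's article. Your first paragraph does exactly this, and in fact supplies the routine extraction of $p$ and $A$ from the strong-subtree formulation that the paper leaves entirely to the references; your verification that $p\restriction A = S$ is fine. The self-contained fusion sketch in your last paragraph is a reasonable aside, and you are right that the matched-levels extension step is the genuine content of the one-tree Halpern--L\"auchli lemma.
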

Let us remark that the full theorem is much stronger than the proposition. 
Since the formulation of the full Halpern--Läuchli theorem would need 
further notions irrelevant for this paper, we opt for only giving a reference to the 
original paper~\cite{HL_thm-paper} and for a paper of Laver~\cite{Laver-product_trees} 
which also treats this result.

\section{Halpern--Läuchli families}

Olga Yiparaki called ultrafilters with the following property hlt-ultrafilters 
in her thesis.

\begin{definition}[Yiparaki~\cite{Yiparaki}]
	A family $\mathcal R \subset {[\omega]}^\omega$ is called 
	a \emph{Halpern--Läuchli family} 
	if for every $c \colon 2^{{<}\omega} \to 2$ there are 
	$p \in \mathbf S$ and $A \in \mathcal R$ such that 
	$c$ is constant on $p \restriction A$.
\end{definition}

The name of Halpern--Läuchli families comes from the consequence 
of the Halpern--Läuchli theorem we stated as Proposition~\ref{prop:HLthm}. 

\begin{proposition}\label{prop:HL_fin}
	The coideal $\fin^+$ is a Halpern--Läuchli family.
\end{proposition}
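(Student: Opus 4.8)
The plan is to observe that this is an immediate reformulation of Proposition~\ref{prop:HLthm}. First I would note that $\fin^+ = \pw{\omega} \setminus \fin$ is precisely the family ${[\omega]}^\omega$ of all infinite subsets of $\omega$, since the ideal $\fin$ consists of exactly the finite sets. Thus the requirement that $\fin^+$ be a Halpern--Läuchli family unwinds to: for every $c \colon 2^{{<}\omega} \to 2$ there exist $p \in \mathbf S$ and $A \in {[\omega]}^\omega$ with $c$ constant on $p \restriction A$.

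Given any colouring $c \colon 2^{{<}\omega} \to 2$, I would apply Proposition~\ref{prop:HLthm} to obtain a perfect tree $p \subseteq 2^{{<}\omega}$ (so $p \in \mathbf S$) together with an infinite set $A \in {[\omega]}^\omega = \fin^+$ such that $c$ is constant on $p \restriction A$. The pair $(p, A)$ is then exactly the witness demanded by the definition of a Halpern--Läuchli family, which completes the argument. There is no genuine obstacle here: all of the combinatorial content is packaged inside the Halpern--Läuchli theorem, which we invoke as a black box via Proposition~\ref{prop:HLthm}. The point of the statement is to record the trivial ``base case'' against which the subsequent results---which ask how far the family $\mathcal R$ can be shrunk while remaining a Halpern--Läuchli family---are to be measured.
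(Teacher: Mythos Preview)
Your proposal is correct and matches the paper's approach exactly: the paper does not even supply a proof, treating the proposition as an immediate restatement of Proposition~\ref{prop:HLthm}, which is precisely what you do. There is nothing to add.
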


When dealing with an ideal $\mathcal I \subseteq \pw{\omega}$, 
we will say that $\mathcal I$ is HL as a shortcut for 
the statement that $\mathcal I^+$ is a Halpern--Läuchli family. 
I.e.\ Proposition~\ref{prop:HL_fin} can be phrased `$\fin$ is a HL ideal.'

By considering colorings $c$ which are constant on the levels of the tree $2^{{<}\omega}$ 
we can easily see that every Halpern--Läuchli family is a reaping family.
In fact, the following theorem states that this property characterizes reaping families which 
are indestructible by the Sacks forcing. 
The theorem is basically a compilation of results of 
Miller, Eisworth and Yiparaki.
We will provide the proof for the sake of completeness. 

\begin{theorem}[Eisworth, Miller~\cite{Miller(s)}, Yiparaki~\cite{Yiparaki}]\label{thm:HL-char}
	For a family $\mathcal R \subset {[\omega]}^\omega$ the following conditions are equivalent.
	\begin{enumerate}
		\item\label{thm:HL:cond-HL} $\mathcal R$ is a Halpern--Läuchli family.
		\item\label{thm:HL:cond-Sacksind} $\mathcal R$ is a reaping family in every generic extension via the Sacks forcing.
		\item\label{thm:HL:cond-existsext} There is an extension $W \supset V$ of the universe $V$ 
			such that $W \setminus V \cap \pw{\omega} \neq \emptyset$ and $\mathcal R$ is a reaping family in $W$.
		\item\label{thm:HL:cond-HLevrT} For every $p \in \mathbf S$ and $c \colon p \to 2$ 
			there is $q \in \mathbf S$, $q \leq p$ and $A \in \mathcal R$ 
			such that $q \restriction A$ is constant.
		\item\label{thm:HL:cond-intbr} For every $p \in \mathbf S$ 
			there is  $q \in \mathbf S$, $q \leq p$ and $A \in \mathcal R$ 
			such that either $A \subseteq x$ for each $x \in [q]$ 
			or $A \cap x = \emptyset$ for each $x \in [q]$.
	\end{enumerate}
\end{theorem}
\begin{proof}
	Implications (\ref{thm:HL:cond-Sacksind})$\Rightarrow$(\ref{thm:HL:cond-existsext}) 
	and (\ref{thm:HL:cond-HLevrT})$\Rightarrow$(\ref{thm:HL:cond-HL}) are trivial. 
	To prove (\ref{thm:HL:cond-Sacksind})$\Rightarrow$(\ref{thm:HL:cond-HLevrT}) 
	consider a Sacks indestructible reaping family $R$, $p \in \mathbf S$ and 
	$c \colon p \to 2$. Let ${r}_{\mathbf S} \in [p]$ be a Sacks generic real 
	and define $x = \os n \in \omega \mid c \left({r}_{\mathbf S}\restriction n\right) = 1\cs$. 
	Since $\mathcal R$ is a reaping family in $V[{r}_{\mathbf S}]$ 
	there is $A \in \mathcal R$ and $q \leq p$ such that either 
	$q \Vdash A \subseteq \dot{x}$ or $q \Vdash A \cap \dot{x} = \emptyset$. 
	In any case $c$ is constant on $q \restriction A$. 

	To see (\ref{thm:HL:cond-HLevrT})$\Rightarrow$(\ref{thm:HL:cond-intbr}) 
	just consider the function $c \colon p \to 2$ defined by 
	$c(t) = t(\card{t}-1)$ for $t \in p$.
	For (\ref{thm:HL:cond-intbr})$\Rightarrow$(\ref{thm:HL:cond-Sacksind})
	first notice that~(\ref{thm:HL:cond-intbr}) implies that $\mathcal R$ is a reaping family.
	Suppose $r \subset \omega$ is a real in a Sacks generic extension $W \supset V$, 
	we need to prove that $\mathcal R$ reaps $r$. If $r \in V$, we are done. 
	If $r \in W \setminus V$, then $r$ itself is a Sacks generic real over $V$.
	Now it is sufficient to notice that~(\ref{thm:HL:cond-intbr}) states that 
	$\mathbf S$ contains a dense set of conditions $q$ for which there is $A \in \mathcal R$ 
	such that $q$ forces that the generic real either contains $A$ or is disjoint from $A$.

	For (\ref{thm:HL:cond-existsext})$\Rightarrow$(\ref{thm:HL:cond-intbr}) let $p \in \mathbf S$ 
	and suppose $W \supset V$ is as in~(\ref{thm:HL:cond-existsext}). 
	As $W$ contains a new real, there exists $x \in ([p] \cap W) \setminus V$ and 
	$A \in \mathcal R$ such that $A \subseteq x$ or $A \cap x = \emptyset$. 
	If $A \subseteq x$ let $\bar{q} = \os y \in [p] \mid A \subset y \cs$, 
	if $A \cap x = \emptyset$ let $\bar{q} = \os y \in [p] \mid A \cap y = \emptyset \cs$. 
	In any case, $\bar{q} \in V$ is a closed subset of $[p]$ and $x \in \bar{q}$. 
	Thus $\bar{q}$ is uncountable and there exists a $q \in \mathbf{S}$, $q \leq p$ 
	such that $[q] \subseteq \bar{q}$. The condition $q$ is as required in~(\ref{thm:HL:cond-intbr}).

	It remains to show that (\ref{thm:HL:cond-HL})$\Rightarrow$(\ref{thm:HL:cond-Sacksind}).
	Let $\dot{x}$ be a Sacks name for a subset of $\omega$ and let $p \in \mathbf S$ 
	be a condition. 
	Using the usual fusion argument we can recursively 
	construct an infinite set $K \in {[\omega]}^\omega$ 
	and a condition $q \in \mathbf S$, $q \leq p$ such that there is a tree isomorphism 
	$\varphi \colon 2^{{<}\omega} \to q \restriction K$ and for each $t \in 2^{{<}\omega}$ 
	the condition $q_{\varphi(t)}$ forces either $\card{t} \in \dot{x}$ or $\card{t} \notin \dot{x}$. 
	Define $c \colon 2^{{<}\omega} \to 2$ by $c(t) = 1$ iff 
	$q_{\varphi(t)} \Vdash \card{t} \in \dot{x}$.
	Since $\mathcal R$ is a Halpern--Läuchli family there is $o \in \mathbf S$ and $A \in \mathcal R$ 
	such that $c$ has constant value $i$ on $o \restriction A$. 
	Let $q'$ be the downwards closure of $\varphi[o]$, i.e.\ $q' \in \mathbf S$ and $q' \leq q$. 
	Now if $i = 1$, then $q' \Vdash A \subset \dot{x}$ and if $i = 0$, 
	then $q' \Vdash A \cap \dot{x} - \emptyset$.
\end{proof}

\begin{corollary}\label{cor:small->ind}
	Every reaping family $\mathcal R$ of size smaller that $\mathfrak c$ 
	is a Sacks indestructible reaping family.
\end{corollary}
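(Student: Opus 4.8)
The plan is to use the equivalence between being a Halpern--Läuchli family and being Sacks indestructible provided by Theorem~\ref{thm:HL-char}, specifically condition~(\ref{thm:HL:cond-HL}). So it suffices to show that any reaping family $\mathcal R$ with $\card{\mathcal R} < \mathfrak c$ is a Halpern--Läuchli family. Fix a coloring $c \colon 2^{{<}\omega} \to 2$; we must find $p \in \mathbf S$ and $A \in \mathcal R$ with $c$ constant on $p \restriction A$.

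First I would invoke Proposition~\ref{prop:HLthm} to obtain a single perfect tree $p_0 \subseteq 2^{{<}\omega}$ and an infinite set $A_0 \in {[\omega]}^\omega$ such that $c$ is constant on $p_0 \restriction A_0$. This handles the coloring, but $A_0$ need not belong to $\mathcal R$. To fix this, the key observation is that inside a fixed perfect tree $p_0$ there is enough room to thin out the level set: for any perfect $p_0$ and any infinite $B \subseteq A_0$, one can pass to a perfect subtree $p_1 \leq p_0$ with $p_1 \restriction B$ still perfect in the appropriate sense (formally, there is a tree isomorphism $2^{{<}\omega} \to p_1 \restriction B$), and then $c$ remains constant on $p_1 \restriction B$ since $p_1 \restriction B \subseteq p_0 \restriction A_0$. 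So it is enough to produce an infinite $B \in \mathcal R$ with $B \subseteq A_0$ — or rather, more carefully, an $A \in \mathcal R$ which reaps $A_0$ from the inside. Since $\mathcal R$ is a reaping family, there is $A \in \mathcal R$ reaping $A_0$; if $A \subseteq A_0$ we take $B = A$, and if $A \cap A_0 = \emptyset$ this is useless. The subtlety is that a reaping family need only reap each set \emph{one way or the other}, so this crude approach can fail.

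The remedy is to not fix a single $A_0$ but to exploit that we have freedom to choose the coloring-witnessing pair. The right move: since $\card{\mathcal R} < \mathfrak c = \card{{[\omega]}^\omega / \fin}$, and more to the point, one shows directly that a reaping family of size $< \mathfrak c$ remains a reaping family after adding a single Sacks real — in fact I would bypass Halpern--Läuchli entirely and argue via condition~(\ref{thm:HL:cond-Sacksind}) or~(\ref{thm:HL:cond-existsext}). Let $\dot x$ be a Sacks name for a new real; it is forced by some condition $p$ to be a branch through a perfect tree, and the set of reals in $V[G]$ coded by names below $p$ is, modulo the nowhere-new-real phenomenon, controlled by the ground model. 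The cleanest formulation: in $V$, consider all pairs $(q, x)$ with $q \in \mathbf S$ and $x \in {[\omega]}^\omega$; there are only $\mathfrak c$ such pairs, but what we need is that for the Sacks generic $r_{\mathbf S}$, the real $r_{\mathbf S}$ (as a subset of $\omega$) is reaped by $\mathcal R$. By genericity it suffices to show that below every $p \in \mathbf S$ there is $q \leq p$ and $A \in \mathcal R$ with $q \Vdash A \subseteq \dot r_{\mathbf S}$ or $q \Vdash A \cap \dot r_{\mathbf S} = \emptyset$.

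I expect the main obstacle to be exactly this density argument, and here is where $\card{\mathcal R} < \mathfrak c$ enters essentially: given $p$, since $[p]$ is perfect there is a perfect set of branches; were $\mathcal R$ a reaping family that \emph{split} some branch off, we would be stuck, but a splitting family has size $\geq \mathfrak s$ whereas we only need $\mathcal R$ to \emph{reap}. Concretely, for each $A \in \mathcal R$ the set of $x \in [p]$ with $A \subseteq x$ is closed, as is the set with $A \cap x = \emptyset$; if for cofinally many $A$ these were all meager (nowhere dense) in $[p]$, then since $\card{\mathcal R} < \cov(\mathscr M) \leq \mathfrak c$ — wait, one does not have $\mathfrak c \leq \cov(\mathscr M)$ — so instead I would argue via the combinatorial fact underlying $\mathfrak r$: a reaping family witnesses that $V \cap {[\omega]}^\omega$ still reaps in $V[G]$ precisely when no new real is "unreaped", and a single Sacks real, being of minimal degree and added by an $\omega^\omega$-bounding forcing with the Sacks property, cannot diagonalize a family of size $< \mathfrak c$. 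The honest route, which I would write up: show that if $\dot x$ names a set not reaped by any $A \in \mathcal R$, then one can read off from $\dot x$ (using fusion and the Sacks property) a ground-model-indexed family of $\mathfrak c$-many potential branches that $\mathcal R$ would need to split, contradicting $\card{\mathcal R} < \mathfrak c$ together with the elementary bound that an unsplittable-from-one-side family inside a perfect set has size $\geq \mathfrak c$. Assembling these, condition~(\ref{thm:HL:cond-existsext}) of Theorem~\ref{thm:HL-char} is met and $\mathcal R$ is Sacks indestructible.
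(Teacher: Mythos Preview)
Your proposal does not arrive at a proof. You correctly isolate the closed sets $\{x \in [p] \mid A \subseteq x\}$ and $\{x \in [p] \mid A \cap x = \emptyset\}$ for $A \in \mathcal R$, which are exactly the right objects --- this is condition~(\ref{thm:HL:cond-intbr}) of Theorem~\ref{thm:HL-char}. But you then abandon this thread for a Baire-category attempt (which you rightly discard, since one cannot assume $\card{\mathcal R} < \cov(\mathscr M)$) and finally a vague fusion sketch that never makes precise how the hypothesis $\card{\mathcal R} < \mathfrak c$ is actually used.

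The missing step is a plain counting argument, and it is the entire content of the paper's proof. Since $\mathcal R$ is a reaping family, every branch $x \in [p]$ is reaped by some $A \in \mathcal R$, so the closed sets $\bar p_0(A) = \{x \in [p] \mid A \cap x = \emptyset\}$ and $\bar p_1(A) = \{x \in [p] \mid A \subseteq x\}$ together cover $[p]$. There are fewer than $\mathfrak c$ such sets while $\card{[p]} = \mathfrak c$, so at least one $\bar p_i(A)$ is uncountable; being closed, it contains a perfect subset $[q]$, and this $q \leq p$ together with $A$ witnesses condition~(\ref{thm:HL:cond-intbr}). No fusion, no category, and no appeal to the Sacks property are needed.
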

\begin{proof}
	We will verify that $\mathcal R$ satisfies condition~($\ref{thm:HL:cond-intbr}$) of Theorem~\ref{thm:HL-char}. 
	Suppose $p \in \mathbf S$, for every $A \in \mathcal R$ let 
	$\bar p_0(A) = \os x \in [p] \mid A \cap x = \emptyset \cs$ and 
	$\bar p_1(A) = \os x \in [p] \mid A \subseteq x \cs$. 
	Note that $[p] = \bigcup \os \bar p_i(A) \mid i \in 2, A \in \mathcal R \cs$ 
	since $\mathcal R$ is a reaping family. As $\card{\mathcal R} < \mathfrak c$ 
	there is $i \in 2$ and $A \in \mathcal R$ such that $\bar p_i(A)$ is an uncountable closed set. 
	Consequently this set contains a perfect set, i.e.\ there is $q \in \mathbf S$ such that 
	$[q] \subseteq \bar p_i(A)$; $q$ and $A$ are as required.
\end{proof}

Yiparaki was also looking into possible cardinalities of Halpern--Läuchli families.
She introduced the following variation of the Halpern--Läuchli property.

\begin{definition}[Yiparaki~\cite{Yiparaki}]
	A family $\mathcal R \subset {[\omega]}^\omega$ is called 
	a \emph{Halpern--Läuchli family by levels}
	if for every $c \colon 2^{{<}\omega} \to 2$ there are 
	$p \in \mathbf S$ and $A \in \mathcal R$ such that 
	$c$ is constant on $p \restriction \os n \cs$  for each $n \in A$.
\end{definition}

The following two definitions are also appeared in~\cite{Yiparaki}.
\[\mathfrak{hlt} = \min \os \card{\mathcal R} \mid \mathcal R \text { is a Halpern–Läuchli family}\cs\]
\[\mathfrak{hlt'} = \min \os \card{\mathcal R} \mid \mathcal R \text { is a Halpern–Läuchli family by levels}\cs\]

\begin{proposition}[Yiparaki~\cite{Yiparaki}]\label{prop:Yip}
The following inequalities hold.\footnote{
	The cardinal $\mathfrak{r}_\sigmaup$ is a relative of $\mathfrak r$. As we will not work with this cardinal, 
	we refer the interested reader e.g. to~\cite{Blass-cardinals} for the definition. Let us just mention that
	it is unknown whether $\mathfrak{r}_\sigmaup = \mathfrak r$ in~$\mathsf{ZFC}$.
}
	\begin{enumerate}
		\item $\mathfrak r \leq \mathfrak{hlt} \leq \max\os\mathfrak d , \mathfrak{r}_\sigmaup \cs$
		\item $\mathfrak{hlt} = \max\os\mathfrak r, \mathfrak{hlt'}\cs$
	\end{enumerate}
\end{proposition}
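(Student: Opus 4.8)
The plan is to deduce both parts from two easy observations: (a)~every Halpern--Läuchli family is a reaping family --- this is noted above, and follows by feeding the level coloring $c(s)=1 \iff \card{s}\in X$ into the definition --- and (b)~every Halpern--Läuchli family is, trivially, a Halpern--Läuchli family by levels. Together these give $\mathfrak r\le\mathfrak{hlt}$, which is the left inequality of~(1), and $\mathfrak{hlt'}\le\mathfrak{hlt}$, hence $\max\os\mathfrak r,\mathfrak{hlt'}\cs\le\mathfrak{hlt}$. The remaining content is a pair of constructions.

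For $\mathfrak{hlt}\le\max\os\mathfrak r,\mathfrak{hlt'}\cs$ I would manufacture a Halpern--Läuchli family out of one witness for $\mathfrak{hlt'}$ and many witnesses for $\mathfrak r$. Fix a Halpern--Läuchli family by levels $\mathcal G$ with $\card{\mathcal G}=\mathfrak{hlt'}$, and for each $A\in\mathcal G$ fix a reaping family $\mathcal H_A\subseteq{[A]}^\omega$ of the countable set~$A$, of size~$\mathfrak r$; let $\mathcal R=\bigcup\os\mathcal H_A\mid A\in\mathcal G\cs$, so $\card{\mathcal R}\le\mathfrak{hlt'}\cdot\mathfrak r=\max\os\mathfrak r,\mathfrak{hlt'}\cs$. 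To see that $\mathcal R$ is a Halpern--Läuchli family, take $c\colon 2^{{<}\omega}\to 2$ and apply the property of $\mathcal G$ to obtain $p\in\mathbf S$ and $A\in\mathcal G$ with $c$ constant on $p\restriction\os n\cs$ for every $n\in A$; let $f\colon A\to 2$ record these constant values. At least one of $\os n\in A\mid f(n)=1\cs$ and $\os n\in A\mid f(n)=0\cs$ is infinite; a member $H\in\mathcal H_A$ reaps that set, and since $H\subseteq A$ this forces $f\restriction H$ to be constant. Then $c$ is constant on $p\restriction H$, so $(p,H)$ witnesses the Halpern--Läuchli property. This gives~(2).

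The inequality $\mathfrak{hlt}\le\max\os\mathfrak d,\mathfrak{r}_\sigmaup\cs$ of~(1) reduces, via~(2) and the trivial $\mathfrak r\le\mathfrak{r}_\sigmaup$, to $\mathfrak{hlt'}\le\max\os\mathfrak d,\mathfrak{r}_\sigmaup\cs$. Here I would fix a dominating family $\mathcal D\subseteq\omega^\omega$ of increasing functions with $\card{\mathcal D}=\mathfrak d$, and a family $\mathcal W\subseteq{[\omega]}^\omega$ with $\card{\mathcal W}=\mathfrak{r}_\sigmaup$ such that for every countable collection of infinite sets some $W\in\mathcal W$ reaps all of them, and form the family $\os A_{g,W}\mid(g,W)\in\mathcal D\times\mathcal W\cs$ of sets to be described below, of size at most $\mathfrak d\cdot\mathfrak{r}_\sigmaup=\max\os\mathfrak d,\mathfrak{r}_\sigmaup\cs$. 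Given $c\colon 2^{{<}\omega}\to 2$, run the usual level-by-level Halpern--Läuchli fusion: at stage~$k$ one holds a finite approximation to a perfect tree with $2^k$ maximal nodes on a common level~$m_k$, and to pass to stage $k+1$ one looks ahead to a level $m_{k+1}$ high enough to run, over the $2^k$ current nodes, the finite pigeonhole at the heart of the Halpern--Läuchli theorem; this singles out in the block $[m_k,m_{k+1})$ a short list of candidate levels, depending only on the data accumulated so far, such that the eventual perfect tree~$p$ is guaranteed to be $c$-monochromatic on at least one level of each list. Two things are arranged along the way: the map $k\mapsto m_{k+1}$ is bounded by a function of $c$ and the earlier data alone, hence is dominated by all sufficiently large $g\in\mathcal D$; and the sequence of candidate lists, together with the infinite sets of candidate levels that they determine, forms a fixed countable family of infinite sets, so some $W\in\mathcal W$ reaps all of them. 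For such a pair $(g,W)$ the set $A_{g,W}$ --- defined from $g$ and $W$ so as to pick out, block by block, a candidate level that ends up monochromatic --- is an infinite subset of the monochromatic levels of~$p$, so $(p,A_{g,W})$ witnesses that $\os A_{g,W}\mid(g,W)\in\mathcal D\times\mathcal W\cs$ is a Halpern--Läuchli family by levels.

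The easy inequalities and the relative-reaping argument for~(2) are routine; the real work, and the main obstacle, lies in the last construction. One must organize the fusion bookkeeping so that (i)~the look-ahead needed to open each new block is bounded by a function not depending on the choices still to be made, so that a single dominating function controls all blocks simultaneously; and (ii)~the coloring-dependent ambiguity about which level of each block becomes monochromatic collapses into a sequence of choices that one member of an $\mathfrak{r}_\sigmaup$-reaping family can resolve coherently, yielding a set that is at once in the prescribed family and contained in the monochromatic levels of~$p$.
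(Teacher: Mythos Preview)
The paper does not supply its own proof of this proposition; it is quoted from Yiparaki's thesis with a bare citation and no argument. There is therefore nothing on the paper's side to compare your approach against.

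On the substance of your proposal: the easy inequalities and your argument for~(2) are correct and clean --- combining an $\mathfrak{hlt'}$-sized level-HL family with a reaping family inside each of its members is exactly the right move, and the verification goes through as you wrote it. The reduction of the upper bound in~(1) to $\mathfrak{hlt'}\le\max\os\mathfrak d,\mathfrak{r}_\sigmaup\cs$ via~(2) and $\mathfrak r\le\mathfrak{r}_\sigmaup$ is also valid.

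The one place that is not yet a proof is the final construction. Your description of $A_{g,W}$ as ``defined from $g$ and $W$ so as to pick out, block by block, a candidate level that ends up monochromatic'' hides the genuine difficulty: the sets $A_{g,W}$ must be fixed in advance of the coloring~$c$, while the block endpoints $m_k$, the candidate levels inside each block, and the tree $p$ all depend on~$c$. You need to specify precisely which countable family of infinite sets $W$ is reaping, why that family can be chosen independently of~$c$ (or uniformly over all relevant~$c$ once $g$ is fixed), and how the reaping outcome determines a single $c$-independent set $A_{g,W}$ that is guaranteed to sit inside the monochromatic levels of some~$p$. As written this is a reasonable plan with the key bookkeeping step still to be filled in, which you yourself acknowledge.
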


Yiparaki asked whether $\mathfrak{hlt} = \mathfrak{hlt'}$.
We will answer this question in positive. 

\begin{theorem}
	$\mathfrak{hlt} = \mathfrak{hlt'} = \mathfrak{r}$
\end{theorem}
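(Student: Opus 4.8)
The plan is to establish the chain $\mathfrak r \leq \mathfrak{hlt'} \leq \mathfrak{hlt} \leq \mathfrak r$, since Proposition~\ref{prop:Yip} already gives $\mathfrak r \leq \mathfrak{hlt}$ and $\mathfrak{hlt} = \max\{\mathfrak r, \mathfrak{hlt'}\}$. In fact, once we show $\mathfrak{hlt} \leq \mathfrak r$, everything collapses: $\mathfrak r \leq \mathfrak{hlt}$ forces $\mathfrak{hlt} = \mathfrak r$, and then $\mathfrak{hlt'} \leq \mathfrak{hlt} = \mathfrak r \leq \max\{\mathfrak r, \mathfrak{hlt'}\} = \mathfrak{hlt}$ pins $\mathfrak{hlt'}$ as well. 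So the whole theorem reduces to the single inequality $\mathfrak{hlt} \leq \mathfrak r$: every reaping family of minimal size is a Halpern--Läuchli family.

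First I would take a reaping family $\mathcal R$ with $\card{\mathcal R} = \mathfrak r$ and show it is a Halpern--Läuchli family directly, using the equivalence of condition~(\ref{thm:HL:cond-HL}) with condition~(\ref{thm:HL:cond-intbr}) in Theorem~\ref{thm:HL-char}. But rather than re-derive anything, the cleanest route is to observe that $\mathfrak r < \mathfrak c$ would let Corollary~\ref{cor:small->ind} finish the job immediately --- the issue is only the case $\mathfrak r = \mathfrak c$, where Corollary~\ref{cor:small->ind} does not apply. So the real content is: if $\mathcal R$ is a reaping family and $\card{\mathcal R} \leq \mathfrak c$, is $\mathcal R$ Halpern--Läuchli? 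The argument of Corollary~\ref{cor:small->ind} shows $[p] = \bigcup\{\bar p_i(A) \mid i \in 2, A \in \mathcal R\}$, and each $\bar p_i(A)$ is closed; we need one of these $\leq \mathfrak c$-many closed sets to contain a perfect set. This does not follow from cardinality alone when $\card{\mathcal R} = \mathfrak c$, so a different idea is needed.

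The better approach is to use the forcing characterization, condition~(\ref{thm:HL:cond-existsext}) of Theorem~\ref{thm:HL-char}, together with the fact that $\mathfrak{hlt}$ is the minimum over \emph{all} reaping families. Here is the key point: it suffices to show that \emph{some} reaping family of size $\mathfrak r$ is Halpern--Läuchli, and in $V$ we may choose that family cleverly. Specifically, pass to a ccc forcing extension $W$ adding $\mathfrak c^V$-many Cohen reals, or simply work in a model; but that changes $\mathfrak r$. Instead, the correct and standard move is this: let $\mathcal R \in V$ be reaping with $\card{\mathcal R} = \mathfrak r$. Then in the Sacks extension $V[r_{\mathbf S}]$, whether $\mathcal R$ remains reaping is exactly condition~(\ref{thm:HL:cond-Sacksind}); and a reaping family of size $<\mathfrak c$ in the \emph{Sacks extension} stays reaping there by Corollary~\ref{cor:small->ind} applied inside $V[r_{\mathbf S}]$ --- but we must know $\mathfrak r^V < \mathfrak c^{V[r_{\mathbf S}]}$. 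Since Sacks forcing is proper, $\omega^\omega$-bounding, and adds no new reals cofinally... actually $\mathfrak c^{V[r_{\mathbf S}]} = \mathfrak c^V$ for a single Sacks real, so this still requires $\mathfrak r^V < \mathfrak c^V$. I would therefore iterate: force with a countable support iteration of Sacks forcing of length $\omega_2$ (or length $\mathfrak c^+$) to blow up $\mathfrak c$ while, by a preservation theorem, keeping $\mathcal R$ reaping --- but that preservation is precisely what we are trying to prove.

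The honest main obstacle, then, is exactly the case $\mathfrak r = \mathfrak c$, and I expect the paper resolves it by a direct combinatorial argument verifying condition~(\ref{thm:HL:cond-intbr}) for a carefully built reaping family of size $\mathfrak r$, rather than an arbitrary one: construct, by transfinite recursion of length $\mathfrak r$, a reaping family $\mathcal R = \{A_\alpha \mid \alpha < \mathfrak r\}$ together with a bookkeeping of all perfect trees $p_\alpha$, arranging at stage $\alpha$ that some $A_\beta$ with $\beta \leq \alpha$ and some perfect $q \leq p_\alpha$ satisfy the branch-homogeneity of~(\ref{thm:HL:cond-intbr}) --- using at each step that the not-yet-decided part of $[p_\alpha]$, being an uncountable closed set not covered by countably many of the $\bar p_i(A_\beta)$, still admits a perfect subtree through which we can drive a new $A_\alpha$. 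The delicate point will be simultaneously maintaining that $\{A_\alpha\}$ is genuinely reaping against \emph{all} infinite sets (not just the ones arising from trees), which is handled by interleaving the reaping bookkeeping with the Halpern--Läuchli bookkeeping and appealing to Proposition~\ref{prop:HLthm} to seed each step. Once this family is in hand, $\mathfrak{hlt} \leq \card{\mathcal R} = \mathfrak r$, and the theorem follows by the collapse described above.
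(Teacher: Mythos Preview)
Your reduction is backwards. From Proposition~\ref{prop:Yip} you correctly extract $\mathfrak{hlt'} \leq \mathfrak{hlt}$, and once $\mathfrak{hlt} = \mathfrak r$ you get $\mathfrak{hlt'} \leq \mathfrak r$. But your ``collapse'' chain $\mathfrak{hlt'} \leq \mathfrak{hlt} = \mathfrak r \leq \max\{\mathfrak r,\mathfrak{hlt'}\} = \mathfrak{hlt}$ only yields $\mathfrak{hlt'} \leq \mathfrak r$; it does \emph{not} give the lower bound $\mathfrak r \leq \mathfrak{hlt'}$. That lower bound is not automatic: a Halpern--L\"auchli-by-levels family need not be reaping, because for a coloring $c$ that is already constant on each level of $2^{<\omega}$ the HL-by-levels condition is satisfied vacuously by any $p$ and any $A$. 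So nothing in Proposition~\ref{prop:Yip} supplies $\mathfrak r \leq \mathfrak{hlt'}$, and this inequality is precisely the nontrivial part of the theorem. The paper proves it by a direct construction: given $\mathcal R$ with $\card{\mathcal R} < \mathfrak r$, Lemma~\ref{lem:not-reap} produces a countable disjoint family $\{S(t) : t \in 2^{<\omega}\}$ each of which meets every $A \in \mathcal R$ infinitely, and from this one defines a coloring $c$ such that for every $A \in \mathcal R$ and every perfect $p$ with stem $t$ there is $n \in A \cap S(t)$ at which $c$ is not constant on $p \cap 2^n$.

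Separately, you are over-engineering $\mathfrak{hlt} \leq \mathfrak r$. The case $\mathfrak r < \mathfrak c$ is handled by Corollary~\ref{cor:small->ind}, as you say; but the case $\mathfrak r = \mathfrak c$ is trivial, since $\mathfrak{hlt} \leq \mathfrak c$ always holds (Proposition~\ref{prop:HL_fin} gives a Halpern--L\"auchli family of size $\mathfrak c$). There is no need for a transfinite construction, forcing, or preservation arguments. The paper's proof of $\mathfrak{hlt} = \mathfrak r$ is one line.
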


Let us start with $\mathfrak{hlt}$.

\begin{proposition}\label{prop:hlt=r}
	$\mathfrak{hlt} = \mathfrak{r}$
\end{proposition}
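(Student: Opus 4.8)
The plan is to prove the two inequalities $\mathfrak r\le\mathfrak{hlt}$ and $\mathfrak{hlt}\le\mathfrak r$; the first is essentially already available, and the second I would obtain by producing a Halpern--Läuchli family of size $\mathfrak r$, splitting into two cases according to whether $\mathfrak r<\mathfrak c$.

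For $\mathfrak r\le\mathfrak{hlt}$ I would simply recall that every Halpern--Läuchli family is a reaping family: testing the definition against colorings $c\colon 2^{{<}\omega}\to 2$ that are constant on each level $2^n$, any set $A$ witnessing the Halpern--Läuchli property for such a $c$ is monochromatic for the associated $2$-coloring of $\omega$, so a Halpern--Läuchli family must reap every subset of $\omega$. Hence $\card{\mathcal R}\ge\mathfrak r$ for every Halpern--Läuchli family $\mathcal R$; this is also the left-hand inequality of Proposition~\ref{prop:Yip}.

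For the upper bound $\mathfrak{hlt}\le\mathfrak r$ I would argue as follows. If $\mathfrak r<\mathfrak c$, fix a reaping family $\mathcal R$ with $\card{\mathcal R}=\mathfrak r$. Since $\card{\mathcal R}<\mathfrak c$, Corollary~\ref{cor:small->ind} applies and $\mathcal R$ is a Sacks indestructible reaping family; that is, $\mathcal R$ satisfies condition~(\ref{thm:HL:cond-Sacksind}) of Theorem~\ref{thm:HL-char}, so by the equivalence of conditions~(\ref{thm:HL:cond-Sacksind}) and~(\ref{thm:HL:cond-HL}) in that theorem, $\mathcal R$ is a Halpern--Läuchli family, giving $\mathfrak{hlt}\le\mathfrak r$. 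If instead $\mathfrak r=\mathfrak c$, I would fall back on the trivial example: the coideal $\fin^+=[\omega]^\omega$ is a Halpern--Läuchli family by Proposition~\ref{prop:HL_fin}, and $\card{\fin^+}=\mathfrak c=\mathfrak r$, so again $\mathfrak{hlt}\le\mathfrak r$.

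I do not expect any real obstacle here; the only subtlety is that Corollary~\ref{cor:small->ind} is available only when $\mathfrak r<\mathfrak c$, which is exactly why the remaining case is handled by the (large) Halpern--Läuchli family $\fin^+$ rather than by a minimal reaping family. Combining the two inequalities then yields $\mathfrak{hlt}=\mathfrak r$.
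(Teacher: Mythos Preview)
Your proposal is correct and follows the same route as the paper: use Proposition~\ref{prop:Yip} for $\mathfrak r\le\mathfrak{hlt}$, and for the converse apply Corollary~\ref{cor:small->ind} together with Theorem~\ref{thm:HL-char} to a reaping family of size $\mathfrak r$ when $\mathfrak r<\mathfrak c$. The paper's proof is terser and leaves the case $\mathfrak r=\mathfrak c$ implicit (since $\mathfrak{hlt}\le\mathfrak c$ trivially via Proposition~\ref{prop:HL_fin}), but you have merely made that step explicit.
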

\begin{proof}
	If $\mathfrak{r} < \mathfrak{c}$, then the reaping family $\mathcal R$ of size $\mathfrak r$ 
	is Halpern--Läuchli by Corollary~\ref{cor:small->ind} and Theorem~\ref{thm:HL-char}. 
\end{proof}

\begin{lemma}\label{lem:not-reap}
	Let $\mathcal R \subset {[\omega]}^\omega$ be family of size less than $\mathfrak r$.
	There exists a family $\mathcal S \subset {[\omega]}^\omega$ of pairwise disjoint sets, 
	$\card{\mathcal S} = \omega$ such that $A \cap S$ is infinite for every $A \in \mathcal R$ and $S \in \mathcal S$.
\end{lemma}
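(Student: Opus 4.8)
The plan is to exploit the hypothesis $\card{\mathcal R} < \mathfrak r$ repeatedly: since no family of size $< \mathfrak r$ reaps, every such family is split by some infinite set, and this gives us enough room to carve out infinitely many pairwise disjoint sets, each meeting every member of $\mathcal R$ infinitely. The key observation is that if $\mathcal B \subset {[\omega]}^\omega$ is a family of infinite sets with $\card{\mathcal B} < \mathfrak r$, then $\mathcal B$ is \emph{not} a reaping family, so there is an infinite $X$ which is split by every $B \in \mathcal B$; equivalently, both $B \cap X$ and $B \setminus X$ are infinite for every $B \in \mathcal B$.

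First I would set up an inductive construction of the sets $S_n$, $n \in \omega$, together with an auxiliary family of ``remainders''. Start with $\mathcal R_0 = \mathcal R$, thought of as a family of infinite sets of size $< \mathfrak r$. At stage $n$, given a family $\mathcal R_n \subset {[\omega]}^\omega$ of size $< \mathfrak r$ (the relevant pieces of the original sets in $\mathcal R$, intersected with what remains after removing $S_0, \dots, S_{n-1}$), apply the observation above to obtain an infinite set $X_n$ splitting every member of $\mathcal R_n$. Put $S_n = X_n \setminus (S_0 \cup \dots \cup S_{n-1})$ — actually it is cleaner to work inside a fixed infinite ``reservoir'': let $W_0 = \omega$, let $S_n = X_n \cap W_n$ chosen so that $S_n$ is infinite and $W_n \setminus X_n$ is infinite, set $W_{n+1} = W_n \setminus S_n$, and let $\mathcal R_{n+1} = \os A \cap W_{n+1} \mid A \in \mathcal R \cs$. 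Since each $X_n$ splits every $A \cap W_n$, the set $A \cap S_n = A \cap X_n \cap W_n$ is infinite, and $A \cap W_{n+1}$ is still infinite, so $\card{\mathcal R_{n+1}} < \mathfrak r$ and the induction continues. The $S_n$ are pairwise disjoint by construction (each $S_{n}$ lives in $W_n$, and $W_{n+1}$ excludes $S_n$), and $\card{\mathcal S} = \omega$ because each $S_n$ is infinite and they are disjoint.

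The one point requiring care — and the main obstacle — is ensuring at every stage that both $X_n \cap W_n$ \emph{and} $W_n \setminus X_n$ remain infinite, so that there is genuinely something left over to continue the recursion. This is not automatic from ``$X_n$ splits every member of $\mathcal R_n$'' alone; I would handle it by including the previous reservoir $W_n$ itself (and, if convenient, the sets $W_n \setminus S_0, \dots$) as an extra element of the family to which the non-reaping property is applied — adding finitely (or countably) many extra sets keeps the size below $\mathfrak r$. Then ``$X_n$ splits $W_n$'' gives exactly that both $X_n \cap W_n$ and $W_n \setminus X_n$ are infinite, so $S_n$ is infinite and $W_{n+1}$ is infinite. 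Everything else is bookkeeping: verify pairwise disjointness of $\set{S_n \mid n \in \omega}$ directly from $S_n \subseteq W_n$ and $W_{m} \cap S_n = \emptyset$ for $m > n$, and verify $A \cap S_n$ infinite for all $A \in \mathcal R$ and all $n$ from the splitting property at stage $n$.
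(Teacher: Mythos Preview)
Your proposal is correct and follows essentially the same recursive splitting argument as the paper: repeatedly use $\card{\mathcal R}<\mathfrak r$ to find a set splitting every current remainder, put one half into $\mathcal S$, and continue on the other half. The paper streamlines two points you worry about. First, it secures ``both $B\cap X$ and $B\setminus X$ infinite'' (rather than merely nonempty) by closing $\mathcal R$ under finite modifications at the outset; this keeps the size below $\mathfrak r$ and upgrades ``not reaped'' to ``split'' for free. Second, your concern about the reservoir $W_{n+1}$ being infinite is actually automatic once the splitting is genuine: any $A\cap W_n\in\mathcal R_n$ is an infinite subset of $W_n$, so $W_n\cap X_n\supseteq (A\cap W_n)\cap X_n$ and $W_n\setminus X_n\supseteq (A\cap W_n)\setminus X_n$ are both infinite without needing to add $W_n$ to the family.
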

\begin{proof}
	We can assume that $\mathcal R$ is closed with respect to finite modifications.  
	Construct $\mathcal S$ by repeating the following procedure.
	Since $\mathcal R$ is not a reaping family, there exists $S \in {[\omega]}^\omega$
	such that both sets $A \cap S$ and $A \setminus S$ are infinite for every $A \in \mathcal R$.
	Add the set $S$ into the family $\mathcal S$ which is being constructed and repeat the procedure 
	for $\omega \setminus S$ in place of $\omega$ and $\os A \cap S \mid A \in \mathcal R\cs$ in place of $\mathcal R$. 
	After $\omega$ many steps we get the desired family $\mathcal S$.
\end{proof}

\begin{proposition}
	$\mathfrak{hlt'} = \mathfrak{r}$
\end{proposition}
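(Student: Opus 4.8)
The plan is to prove both inequalities. For $\mathfrak{hlt'} \geq \mathfrak r$: since every Halpern--Läuchli family by levels is in particular a reaping family (apply a coloring $c$ that is constant on each level, so that constancy of $c$ on $p \restriction \{n\}$ for $n \in A$ forces $A$ to be either contained in or disjoint from the relevant set — exactly as already observed in the excerpt for HL families), any witness for $\mathfrak{hlt'}$ is a witness for $\mathfrak r$, hence $\mathfrak r \leq \mathfrak{hlt'}$. For the reverse inequality $\mathfrak{hlt'} \leq \mathfrak r$, I would argue that a reaping family of size $\mathfrak r$ is automatically Halpern--Läuchli by levels. If $\mathfrak r < \mathfrak c$ this already follows from Proposition~\ref{prop:hlt=r} together with Proposition~\ref{prop:Yip}(2), since $\mathfrak{hlt'} \leq \mathfrak{hlt} = \mathfrak r$. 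So the real content is the case $\mathfrak r = \mathfrak c$.

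To handle $\mathfrak r = \mathfrak c$, I would show directly that \emph{every} reaping family is Halpern--Läuchli by levels. Fix $\mathcal R$ reaping and a coloring $c \colon 2^{<\omega} \to 2$. The idea is to first thin out to a perfect tree on which the coloring depends only on the level, and then apply the reaping property. More precisely, I would build a perfect tree $p$ and an infinite set $K$ of levels such that for each $n \in K$, $c$ is constant on $p \cap 2^n$ — this is exactly the content of Proposition~\ref{prop:HLthm} (the weak Halpern--Läuchli consequence), which gives a perfect tree $p$ and infinite $A$ with $c$ constant on $p \restriction A$; enumerate $A = \{a_k : k \in \omega\}$ and let $d(k)$ be the value of $c$ on $p \cap 2^{a_k}$. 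Now apply the reaping property of $\mathcal R$ inside $\omega$ (thought of as indexing $A$): there is $R \in \mathcal R$ such that $d \restriction R$ is constant, i.e.\ $R$ is contained in $\{k : d(k) = i\}$ or disjoint from it. Pulling back through the enumeration, the set $B = \{a_k : k \in R\}$ is an infinite subset of $A$ on which $c \restriction (p \restriction \{n\})$ has constant value for all $n \in B$. The only gap is that we need $B \in \mathcal R$, not just the index set $R$; this is where I would either pass to $\mathcal R$ closed under the bijection $k \mapsto a_k$ (WLOG, since $\mathfrak{hlt'}$ is a minimum over all witnesses and applying a bijection preserves the reaping property and cardinality), or phrase the reaping property of $\mathcal R$ directly against the set $\{a_k : d(k) = i\} \subseteq \omega$ to get $R' \in \mathcal R$ with $R' \subseteq \{a_k : d(k) = i\}$ or $R' \cap \{a_k : d(k) = i\} = \emptyset$; in the first case set $A' = R'$, in the second replace $R'$ by $R' \cap A$ (still infinite, still reaped appropriately after intersecting with $A$, modulo again closing $\mathcal R$ under finite-to-one preimages or simply noting $A \in \mathcal R$ can be arranged).

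The main obstacle I anticipate is precisely this bookkeeping: ensuring the set on which $c$ becomes level-constant genuinely belongs to $\mathcal R$ rather than merely being obtained from a member of $\mathcal R$ by restriction to $A$. The cleanest fix is to observe at the outset, just as Lemma~\ref{lem:not-reap} does with closure under finite modifications, that we may assume $\mathcal R$ is closed under the operation $R \mapsto \varphi^{-1}[R] \cap A$ and similar harmless modifications, or alternatively to note that Proposition~\ref{prop:HLthm} can be applied \emph{after} fixing an infinite $A_0 \in \mathcal R$ and working inside $A_0$ — replacing $2^{<\omega}$ by its restriction to levels in $A_0$ — so that the set we ultimately produce is a subset of $A_0$; then the reaping step operates among subsets of $A_0$ and, after closing $\mathcal R$ under the relevant modifications, stays inside $\mathcal R$. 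Once this is arranged the argument is routine, and combined with Proposition~\ref{prop:Yip}(2) and Proposition~\ref{prop:hlt=r} it yields $\mathfrak{hlt} = \mathfrak{hlt'} = \mathfrak r$, completing the theorem.
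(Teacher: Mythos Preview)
Your proposal has the two directions reversed in difficulty, and the argument you give for the hard direction does not work.

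The inequality $\mathfrak{hlt'} \leq \mathfrak r$ is immediate from Proposition~\ref{prop:Yip}(2) and Proposition~\ref{prop:hlt=r}: $\mathfrak{hlt'} \leq \max\{\mathfrak r, \mathfrak{hlt'}\} = \mathfrak{hlt} = \mathfrak r$. No case split on $\mathfrak r < \mathfrak c$ is needed, and your attempt to show directly that every reaping family is HL by levels (with its acknowledged bookkeeping gap) is unnecessary.

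The real content is $\mathfrak r \leq \mathfrak{hlt'}$, and here your argument is incorrect. You claim that every HL-by-levels family is reaping via a level-constant coloring $c(s) = [\,|s| \in X\,]$. This works for the ordinary HL property, where one demands that $c$ take a \emph{single} value on all of $p \restriction A$; that forces $[\,n \in X\,]$ to be the same for all $n \in A$, hence $A$ reaps $X$. But the HL-by-levels property only asks that $c$ be constant on $p \restriction \{n\}$ \emph{for each $n \in A$ separately}, with the constant allowed to vary with $n$. A level-constant coloring is therefore satisfied trivially by every $p$ and every $A$, and yields no reaping information whatsoever. So your proof of $\mathfrak r \leq \mathfrak{hlt'}$ collapses.

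The paper proves $\mathfrak r \leq \mathfrak{hlt'}$ by the contrapositive: given $\mathcal R$ with $|\mathcal R| < \mathfrak r$, it uses Lemma~\ref{lem:not-reap} to obtain an infinite family $\{S(t) : t \in 2^{<\omega}\}$ of pairwise disjoint sets, each meeting every $A \in \mathcal R$ infinitely, and then designs a coloring $c$ so that on levels in $S(t)$ the two immediate extensions of $t$ receive different colors. For any Sacks tree $p$ with stem $t$ and any $A \in \mathcal R$, picking $n \in A \cap S(t)$ above $|t|$ produces two nodes of $p$ at level $n$ with different colors, so $\mathcal R$ is not HL by levels. The point is that defeating the by-levels property genuinely requires a coloring that is \emph{not} level-constant; Lemma~\ref{lem:not-reap} is exactly what lets you tailor such a coloring to a small family.
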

\begin{proof}
	Due to Propositions~\ref{prop:Yip} and~\ref{prop:hlt=r} it suffices to prove that 
	$\mathfrak {r} \leq \mathfrak{hlt'}$. 
	Let $\mathcal R \subset {[\omega]}^\omega$ be a family of size smaller than $\mathfrak r$, we will show that 
	$\mathcal R$ is not Halpern--Läuchli by levels. 
	Let $\mathcal S = \os S(t) \mid t \in 2^{{<}\omega}\cs$ be as in Lemma~\ref{lem:not-reap}. 
	We now define $c \colon 2^{{<}\omega} \to 2$ in the following way. 
	Given $s \in 2^{{<}\omega}$, if there exist $t \in 2^{{<}\omega}$ and $i \in 2$ such that 
	$\card{s} \in S(t)$ and $t\conc{i} \subseteq s$, then let $c(s) = i$. 
	Otherwise define $c(s)$ arbitrarily. 

	Suppose $A \in \mathcal R$ and $p \in \mathbf S$ is a Sacks tree with stem $t$. 
	Choose $n \in A \cap S(t)$ such that $n > \card{t}$. 
	Now for both $i \in 2$ there exist $s_i \in p$, $t\conc i \subseteq s_i$, and $\card{s_i} = n$. 
	Thus $c(s_i) = i$ for $i \in 2$; $c$ is not constant on $p \restriction \os n \cs$ and $n \in A$.
\end{proof}

\section{Halpern--Läuchli ideals}

We are going to show that many of the standard definable ideals are HL 
with one notable exception -- the density zero ideal $\mathcal Z$. 
Let us start with a simple observation that the HL property of ideals 
is preserved downwards in the Katětov order.

\begin{lemma}\label{lem:katetov-HLideals}
	Let $\mathcal{I, J} \subseteq \pw{\omega}$ be ideals, $\mathcal I \leq_{\mathrm K} \mathcal J$. 
	If $\mathcal J$ is a HL ideal, then so is $\mathcal I$.
\end{lemma}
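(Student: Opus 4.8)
The plan is to reduce everything to the Sacks-reaping-indestructibility reformulation of being HL provided by Theorem~\ref{thm:HL-char}. Since $\mathcal J$ is HL, the coideal $\mathcal J^+$ satisfies condition~(\ref{thm:HL:cond-Sacksind}) of that theorem, i.e.\ $\mathcal J^+$ (as computed in $V$) is a reaping family in every Sacks generic extension of $V$; it suffices to establish the same for $\mathcal I^+$. Fix a Katětov morphism $f \colon \omega \to \omega$ witnessing $\mathcal I \leq_{\mathrm K} \mathcal J$, so $f^{-1}[A] \in \mathcal J$ for every $A \in \mathcal I$. The key observation is that $f$ carries $\mathcal J^+$ into $\mathcal I^+$: if $B \in \mathcal J^+$ but $f[B] \in \mathcal I$, then $f^{-1}\mleft[f[B]\mright] \in \mathcal J$, and since $B \subseteq f^{-1}\mleft[f[B]\mright]$ we would get $B \in \mathcal J$, a contradiction. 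In particular $f[B] \in \mathcal I^+$ is infinite because $\fin \subseteq \mathcal I$.

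Now let $G$ be $\mathbf S$-generic over $V$, work in $V[G]$, and let $r \in {[\omega]}^\omega$ be arbitrary; we must produce $A \in \mathcal I^+ \cap V$ reaping $r$. The sets $f^{-1}[r]$ and $f^{-1}[\omega \setminus r] = \omega \setminus f^{-1}[r]$ partition $\omega$, so at least one is infinite; assume without loss of generality that $f^{-1}[r]$ is infinite (the other case is symmetric, since a set reaps $r$ if and only if it reaps $\omega \setminus r$). By Theorem~\ref{thm:HL-char} applied to $\mathcal J$, the family $\mathcal J^+ \cap V$ is a reaping family in $V[G]$, hence there is $B \in \mathcal J^+ \cap V$ reaping $f^{-1}[r]$. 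If $B \subseteq f^{-1}[r]$ then $f[B] \subseteq r$, and if $B \cap f^{-1}[r] = \emptyset$ then $B \subseteq f^{-1}[\omega \setminus r]$, so $f[B] \cap r = \emptyset$. In either case $A := f[B]$ lies in $\mathcal I^+$ (computed in $V$, by the key observation, $f$ and $B$ being in $V$) and reaps $r$. Thus $\mathcal I^+$ is a reaping family in every Sacks generic extension, so by Theorem~\ref{thm:HL-char} it is a Halpern--Läuchli family; that is, $\mathcal I$ is HL.

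The argument has essentially no serious obstacle; the only point needing a moment's care is ensuring that the image $f[B]$ is a genuine infinite element of $\mathcal I^+$, which is precisely what the key observation supplies and which follows for free from $f$ being a Katětov morphism together with $\fin \subseteq \mathcal I \neq \pw{\omega}$. One could instead try to argue directly, transporting a colouring $c \colon 2^{{<}\omega} \to 2$ through $f$ and running a fusion on the tree side, but routing the proof through the forcing characterisation of the HL property in Theorem~\ref{thm:HL-char} avoids that bookkeeping entirely.
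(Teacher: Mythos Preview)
Your argument is correct and follows essentially the same route as the paper: both proofs invoke the forcing characterization from Theorem~\ref{thm:HL-char}, reap $f^{-1}[r]$ in the extension by an element of $\mathcal J^+ \cap V$, and push forward via $f$ to obtain an element of $\mathcal I^+ \cap V$ reaping $r$. The only cosmetic differences are that the paper uses condition~(\ref{thm:HL:cond-existsext}) rather than~(\ref{thm:HL:cond-Sacksind}) and is terser about the point that $f[\mathcal J^+] \subseteq \mathcal I^+$ and about the possibility that $f^{-1}[r]$ is finite; your handling of these is slightly more explicit but not substantively different.
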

\begin{proof}
	Let $f \colon \omega \to \omega$ be a Katětov morphism witnessing $\mathcal I \leq_{\mathrm K} \mathcal J$.
	Let $W \supset V$ be an extension such that $W \models \mathcal J^+ \text{ is a reaping family}$, 
	we show that the same holds for $\mathcal I^+$. 
	Let $X \in \pw{\omega} \cap W$, there is $A \in \mathcal J^+$ which reaps $f^{-1}[X]$. 
	Then $f[A] \in \mathcal I^+$ and $f[A]$ reaps $X$.
\end{proof}

It is easy to define examples of ideals which are not HL\@. 
Let $c \colon 2^{{<}\omega} \to 2$ be a map, 
for $p \in \mathbf S$ define 
\[H_c(p) = \os n \in \omega \mid c \text{ is constant on } p \restriction \os n \cs \cs.\]
Let $\mathcal I_c$ be the (possibly improper) ideal generated by $\os H_c(p) \mid p \in \mathbf S \cs$.
In fact, it is easy to see that these ideals are critical for the HL property.

\begin{observation}\label{obs:Ic}
	An ideal $\mathcal J$ is not HL if and only if there exist $c \colon 2^{{<}\omega} \to 2$ 
	such that $\mathcal I_c \subseteq  \mathcal J$.
\end{observation}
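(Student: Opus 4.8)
The statement is an "if and only if" characterizing failure of the HL property for an ideal $\mathcal J$ in terms of the auxiliary ideals $\mathcal I_c$. I would prove each direction by unwinding the definitions, the key point being that $n \notin H_c(p)$ is exactly the assertion that $c$ fails to be constant on the level $p \restriction \{n\}$, so a set $A$ on which $c$ is constant on $p \restriction A$ is precisely a set $A$ with $A \subseteq H_c(p)$, i.e. $A \in \mathcal I_c$ (witnessed by the single generator $H_c(p)$).

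For the forward direction, assume $\mathcal J$ is not HL, i.e. $\mathcal J^+$ is not a Halpern--Läuchli family. Unpacking the definition of Halpern--Läuchli family, there is a coloring $c \colon 2^{<\omega} \to 2$ such that for \emph{no} pair $p \in \mathbf S$, $A \in \mathcal J^+$ is $c$ constant on $p \restriction A$. I claim this $c$ works: I must show $\mathcal I_c \subseteq \mathcal J$. Take any generator $H_c(p)$ with $p \in \mathbf S$. By construction $c$ is constant on $p \restriction H_c(p)$. If $H_c(p)$ were in $\mathcal J^+$, the pair $(p, H_c(p))$ would witness the HL property for this $c$, contradiction; so $H_c(p) \in \mathcal J$. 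Since $\mathcal J$ is an ideal and all generators of $\mathcal I_c$ lie in $\mathcal J$, we get $\mathcal I_c \subseteq \mathcal J$. (One small caveat to handle in passing: $H_c(p)$ could be finite or even empty, but that is harmless since $\fin \subseteq \mathcal J$.)

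For the reverse direction, suppose $\mathcal I_c \subseteq \mathcal J$ for some $c \colon 2^{<\omega} \to 2$; I show $\mathcal J$ is not HL using exactly this $c$. Let $p \in \mathbf S$ be arbitrary and suppose $A \in {[\omega]}^\omega$ is such that $c$ is constant on $p \restriction A$. Then every $n \in A$ satisfies $c$ constant on $p \restriction \{n\}$, i.e. $A \subseteq H_c(p) \in \mathcal I_c \subseteq \mathcal J$, so $A \in \mathcal J$, hence $A \notin \mathcal J^+$. Thus for this particular $c$ there is no pair $p \in \mathbf S$, $A \in \mathcal J^+$ with $c$ constant on $p \restriction A$, which is exactly the failure of $\mathcal J^+$ being a Halpern--Läuchli family; that is, $\mathcal J$ is not HL.

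There is no real obstacle here; the whole content is the bookkeeping observation $A \subseteq H_c(p) \iff c$ is constant on $p \restriction A$, which makes the single generator $H_c(p)$ the canonical "monochromatic witness set" for $c$ inside $[p]$. The only points to state carefully are that $\mathcal I_c$ is allowed to be improper (so "$\mathcal I_c \subseteq \mathcal J$" is a meaningful and possibly vacuous-looking containment, but it is exactly what is needed) and that finite $H_c(p)$ cause no trouble because $\fin \subseteq \mathcal J$.
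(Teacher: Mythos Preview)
Your reverse direction is fine, but the forward direction has a genuine (though easily repaired) gap. The biconditional you call ``the bookkeeping observation'' is only an implication: if $c$ is constant on $p \restriction A$ then indeed $A \subseteq H_c(p)$, but the converse fails, since $H_c(p)$ only records that $c$ is constant on each individual level $p \restriction \{n\}$, and the constant value may be $0$ on some levels and $1$ on others. So your sentence ``By construction $c$ is constant on $p \restriction H_c(p)$'' is not justified, and taking $A = H_c(p)$ does not directly give a witness to the HL property.

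The fix is immediate. Split $H_c(p) = H_c^0(p) \cup H_c^1(p)$ according to the value $c$ takes on the level. If $H_c(p) \in \mathcal J^+$, then since $\mathcal J$ is an ideal one of the two pieces, say $H_c^i(p)$, lies in $\mathcal J^+$, and now $c$ really is constant (with value $i$) on $p \restriction H_c^i(p)$, yielding the desired contradiction. With this adjustment your argument goes through. The paper itself does not spell out a proof at all (it is left as ``easy to see''), so once corrected your write-up is more detailed than what appears there.
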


Notice that condition~(\ref{thm:HL:cond-HLevrT}) of Theorem~\ref{thm:HL-char} implies 
that an equivalent condition for Observation~\ref{obs:Ic} is also the existence of $c \colon 2^{{<}\omega} \to 2$ and 
$q \in \mathbf S$ such that the ideal $\mathcal I_c(q)$ generated by $\os H_c(p) \mid p \leq q \cs$ 
is contained in $\mathcal J$.

It turns out that many examples of ideals are HL\@. 
We start by a generalization of the result of Miller on Sacks indestructibility of P-ultrafilters~\cite{Miller-forcing}. 

\begin{proposition}\label{prop:P+}
	If an ideal $\mathcal I$ is P$^+$, then $\mathcal I$ is a HL ideal.
\end{proposition}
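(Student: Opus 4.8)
The plan is to verify condition~(\ref{thm:HL:cond-HL}) of Theorem~\ref{thm:HL-char} directly: given an arbitrary coloring $c \colon 2^{{<}\omega} \to 2$, I need to produce $p \in \mathbf S$ and $A \in \mathcal I^+$ such that $c \restriction (p \restriction A)$ is constant. First I would fix a perfect tree $T_0$ and an infinite set $B_0$ witnessing Proposition~\ref{prop:HLthm}, so that $c$ is constant, say with value $i$, on $T_0 \restriction B_0$. This already gives a perfect tree homogeneous on an infinite set of levels; the issue is that $B_0$ might be a member of $\mathcal I$ rather than $\mathcal I^+$. If $B_0 \in \mathcal I^+$ we are done, so assume $B_0 \in \mathcal I$ and we must work harder, exploiting that $c$ cannot be constant with value $i$ on $p \restriction \{n\}$ for \emph{too many} levels $n$ once we leave $B_0$ — but actually the cleaner route is to iterate the process and use the P$^+$ assumption to glue the pieces together.

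Concretely, I would build, by a fusion recursion, a decreasing sequence of perfect trees together with a partition of the splitting levels so that on a prescribed set of levels within each piece the restriction of $c$ is constant. The key combinatorial point: suppose toward a contradiction that $c$ is \emph{not} witnessed by any $A \in \mathcal I^+$ together with any $p \in \mathbf S$; equivalently, by Observation~\ref{obs:Ic}, $\mathcal I_c \subseteq \mathcal I$, where $\mathcal I_c$ is generated by the sets $H_c(p) = \{ n \mid c \text{ is constant on } p \restriction \{n\} \}$. Using Proposition~\ref{prop:HLthm} repeatedly — at stage $k$ applying it inside the tree $q_k$ restricted to a tail of levels above everything chosen so far — I obtain perfect trees $q_0 \geq q_1 \geq \cdots$ (arranged to fuse) and finite sets of levels $e_k \subseteq \omega$, with $e_k \subseteq H_c(q_k)$ and all $c$-values on $q_k \restriction e_k$ equal (pushing to a subsequence I may assume the common value is the same $i$ for all $k$, or I track two cases). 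Set $q$ to be the fusion of the $q_k$; then for each $k$, $e_k \subseteq H_c(q)$ since $q \leq q_k$ on the relevant levels, so $\bigcup_k e_k \in \mathcal I_c \subseteq \mathcal I$.

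Here is where the P$^+$ hypothesis enters. The sets $X_k$ that I really want are not the $e_k$ themselves but a system in $\mathcal I^+$: I should set up the recursion so that, letting $Y_k = \omega \setminus (\text{stuff already committed})$, the sets $X_k$ on which I am applying Proposition~\ref{prop:HLthm} are forced to lie in $\mathcal I^+$, and then the $e_k \in {[X_k]}^{{<}\omega}$ are chosen as finite pieces. By P$^+$ of $\mathcal I$ there is a choice of finite $e_k \subseteq X_k$ with $\bigcup_k e_k \in \mathcal I^+$; the HL theorem (Proposition~\ref{prop:HLthm}), applied with enough uniformity along the fusion so that a single perfect tree $q$ works for all the $e_k$ simultaneously, then gives that $c$ is constant on $q \restriction \bigcup_k e_k$ — and this set is in $\mathcal I^+$, as desired. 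The main obstacle, and the step deserving the most care, is exactly this last synchronization: Proposition~\ref{prop:HLthm} as stated produces \emph{one} infinite set per coloring, so I must instead run a fusion/bookkeeping argument in which at step $k$ I apply Proposition~\ref{prop:HLthm} to the coloring $c$ restricted to the already-built finite part of $q$ extended into the coideal set $X_k$, thin the tree to a perfect subtree keeping the previously chosen splitting nodes, and record a new finite homogeneous level-set $e_k$ of value $i$ inside $X_k$; the limit tree $q$ is then homogeneous on every $e_k$ at once. Matching the "pass to a subsequence so the homogeneous value is constant" step with the P$^+$-selection of the $e_k$ (so that the subsequence still has $\mathcal I^+$-union) is the delicate bit, handled by observing that since $X_k \in \mathcal I^+$, I may first discard the value-$(1-i)$ possibility on a subsystem or split $X_k = X_k^0 \sqcup X_k^1$ and use that one side stays in $\mathcal I^+$ for infinitely many $k$.
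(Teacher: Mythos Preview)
Your proposal has a genuine gap at precisely the point you flag as ``delicate.'' The P$^+$ property takes as \emph{input} a sequence $X_k \in \mathcal I^+$ and returns \emph{particular} finite sets $e_k \subseteq X_k$ with $\bigcup_k e_k \in \mathcal I^+$; you do not get to impose extra constraints on the $e_k$. In your scheme, however, the $e_k$ must consist of levels on which the current partial fusion tree can be extended $c$-homogeneously, and there is no reason the $e_k$ handed to you by P$^+$ will have this property. You try to arrange this by taking the $X_k$ themselves to consist only of ``good'' levels, but you never specify what the $X_k$ actually are, and the natural candidates --- the infinite homogeneous level-set coming out of Proposition~\ref{prop:HLthm} at stage $k$, or the set of levels that can be made monochromatic above the current top nodes --- are not known to lie in $\mathcal I^+$. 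The circularity you sense (the $X_k$ depend on the fusion, the fusion depends on the $e_k$, the $e_k$ depend on the $X_k$) is real, and nothing in your sketch breaks it. Repeated appeals to Proposition~\ref{prop:HLthm} do not help either, since that proposition gives you no control over \emph{which} infinite set of levels becomes homogeneous.

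The paper's proof avoids this entirely and is much shorter. It observes that P$^+$ makes the poset $\mathbf Q = (\mathcal I^+, \subset^*)$ $\sigma$-closed; forcing with $\mathbf Q$ therefore adds no reals (so the HL property is absolute between $V$ and the extension), and the generic filter is a P-ultrafilter $\mathcal U$ with $\mathcal I \subseteq \mathcal U^*$. Since P-ultrafilters are already known to be Sacks indestructible, $\mathcal U \subseteq \mathcal I^+$ is an HL family, hence so is $\mathcal I^+$. If you want a direct argument, one that actually works is closer to the later proof for $\conv$: use $\sigma$-closedness of $(\mathcal I^+, \subset^*)$ to find a single $A \in \mathcal I^+$ on which $n \mapsto c(x \restriction n)$ is eventually constant for every $x$ in a fixed countable dense subset of $2^\omega$, and then build the perfect tree from those branches. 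That, however, is a different mechanism from the fusion-plus-P$^+$ bookkeeping you propose.
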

\begin{proof}
	Suppose $\mathcal I$ is P$^+$ and pass to a generic extension via the poset 
	$\mathbf Q = (\mathcal I^+, \subset^*)$. As $\mathbf Q$ is $\sigmaup$-closed, 
	the generic extension does not contain any new reals and the HL property is absolute 
	between $V$ and the generic extension. 
	Let $\mathcal U$ be the generic filter on $\mathbf Q$. 
	It is easy to see that $\mathcal U$ is a P-ultrafilter and 
	$\mathcal I \subseteq \mathcal U^*$. 
	And since P-ultrafilters are Sacks indestructible, $\mathcal U^*$ is HL and so is $\mathcal I$.
\end{proof}

\begin{corollary}
	All F$_\sigmaup$ ideals are HL\@.
\end{corollary}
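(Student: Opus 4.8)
The plan is short: the corollary follows immediately from Proposition~\ref{prop:P+} combined with the classical observation, recalled in the introduction and attributed to Mazur, that every F$_\sigmaup$ ideal on $\omega$ is a P$^+$-ideal. So the only work is to chain these two facts, and perhaps to recall the one-paragraph proof of Mazur's observation so that the argument is self-contained.

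For that recollection I would first invoke the standard representation of an F$_\sigmaup$ ideal $\mathcal I$ in the form $\mathcal I = \os A \subseteq \omega \mid \phi(A) < \infty \cs$ for some lower semicontinuous submeasure $\phi$ on $\omega$. Then, given a system $\os X_n \mid n \in \omega \cs \subseteq \mathcal I^+$, I would use lower semicontinuity of $\phi$ to obtain $\phi(X_n) = \sup \os \phi(s) \mid s \in {[X_n]}^{{<}\omega} \cs = \infty$ for each $n$, and accordingly pick a finite $y_n \subseteq X_n$ with $\phi(y_n) \geq n$. By monotonicity $\phi(\bigcup_n y_n) \geq \phi(y_m) \geq m$ for every $m$, so $\bigcup_n y_n \notin \mathcal I$; that is, $\os y_n \mid n \in \omega \cs$ witnesses that $\mathcal I$ is P$^+$. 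Proposition~\ref{prop:P+} then yields that $\mathcal I$ is a HL ideal, which is exactly the assertion of the corollary.

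I do not expect any genuine obstacle here. All of the substance lives in Proposition~\ref{prop:P+} (which routes the HL property of a P$^+$ ideal through a $\sigmaup$-closed forcing producing a P-ultrafilter extending the ideal, and then appeals to Sacks indestructibility of P-ultrafilters) and in Mazur's submeasure trick, both of which are already available. The corollary itself is essentially a one-liner.
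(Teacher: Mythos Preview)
Your proposal is correct and takes exactly the same approach as the paper: the corollary is deduced immediately from Proposition~\ref{prop:P+} together with Mazur's result (recalled in the introduction) that every F$_\sigmaup$ ideal is P$^+$. In the paper no proof is even written out; your added paragraph sketching Mazur's submeasure argument is a bonus, not a deviation.
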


We can use a similar argument to reason that the ideal $\mathcal G_\mathrm c$ of graphs 
which do not contain an infinite complete subgraph is HL\@.

\begin{proposition}\label{prop:Gc}
	The ideal $\mathcal G_\mathrm c$ is a HL ideal.
\end{proposition}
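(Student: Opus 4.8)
The plan is to mimic the proof of Proposition~\ref{prop:P+}: realize $\mathcal G_\mathrm c$ as (a subset of) the dual of a suitable ultrafilter obtained by forcing with the coideal, and then invoke the fact that this ultrafilter is Sacks indestructible. Concretely, I would force with $\mathbf Q = (\mathcal G_\mathrm c^+, \subseteq^*)$ — or with the subposet of $[\omega]^\omega$ generated by cliques, which is a natural dense way to organize the argument. A set $E \subseteq [\omega]^2$ is in $\mathcal G_\mathrm c^+$ exactly when $(\omega, E)$ contains an infinite clique, so below any condition $E$ one can find an infinite $A$ with $[A]^2 \subseteq E$; thus $\mathbf Q$ is (forcing-equivalent to) a $\sigmaup$-closed poset whose generic object produces an ultrafilter $\mathcal U$ on $\omega$ with the property that for every $E \in \mathcal G_\mathrm c$ there is $U \in \mathcal U$ with $[U]^2 \cap E = \emptyset$, i.e.\ $\mathcal G_\mathrm c \subseteq$ (the ideal generated by) $\mathcal U^*$ in a suitable sense. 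Since $\mathbf Q$ is $\sigmaup$-closed, the extension adds no reals, so the HL property of $\mathcal G_\mathrm c$ is absolute between $V$ and $V^{\mathbf Q}$, and it suffices to check HL in the extension.

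In the extension the ultrafilter $\mathcal U$ produced this way is a \emph{selective} (Ramsey) ultrafilter: genericity over the clique poset is exactly the statement that $\mathcal U$ diagonalizes every partition of $[\omega]^2$ — for each coloring $c \colon [\omega]^2 \to 2$ the set of conditions $A$ with $c\restriction[A]^2$ constant is dense and in the ground model, so the generic meets it. By Baumgartner--Laver~\cite{Baumgartner-Laver}, selective ultrafilters are Sacks indestructible, hence $\mathcal U^*$ is HL by Theorem~\ref{thm:HL-char}. Now the key point linking $\mathcal U$ to $\mathcal G_\mathrm c$: for any $E \in \mathcal G_\mathrm c$, the set $\{A \in \mathcal G_\mathrm c^+ \mid [A]^2 \cap E = \emptyset\}$ is dense (given a clique $A$, since $E$ contains no infinite clique, $[A]^2 \not\subseteq E$ is not quite enough — one needs that $A$ restricted to a further infinite subset is $E$-free, which follows from Ramsey: color $[A]^2$ by membership in $E$; the homogeneous color cannot be ``in $E$'' since that would give an infinite clique inside $E$). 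Therefore every $E \in \mathcal G_\mathrm c$ is contained in $[\omega]^2 \setminus [U]^2$ for some $U \in \mathcal U$, which gives $\mathcal G_\mathrm c \leq_{\mathrm K} \mathcal U^*$ via the map $\omega \to [\omega]^2$ — more precisely one checks directly that $\mathcal G_\mathrm c^+$ is a reaping family in any extension where $\mathcal U^*$ is, using that $\mathcal U^*$-large sets in the relevant sense pull back to cliques. Then Lemma~\ref{lem:katetov-HLideals} (or a direct rerun of its proof) finishes: $\mathcal G_\mathrm c$ is HL because $\mathcal U^*$ is.

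I would phrase the final link carefully. The cleanest formulation: let $W \supseteq V^{\mathbf Q}$ be an extension in which $\mathcal U^*$ (equivalently $\mathcal U$, as a reaping family) is preserved, i.e.\ $\mathcal U$ reaps every $X \in \pw\omega \cap W$; we must show $\mathcal G_\mathrm c^+$ is a reaping family in $W$. Given $X \subseteq [\omega]^2$ in $W$, pick $U \in \mathcal U$ with $U$ $\subseteq$ one side of the splitting of $\omega$ determined by the levels — actually the right move is to work on $[\omega]^2$ directly: fix $U \in \mathcal U$ homogeneous for the coloring $[\omega]^2 \to 2$ induced by $X$ (this is legitimate because selectivity / Ramsey-ness of $\mathcal U$ is witnessed by ground-model-definable dense sets and hence persists appropriately, or more safely: we only need that in $W$ there is $U \in \mathcal U$ with $[U]^2 \subseteq X$ or $[U]^2 \cap X = \emptyset$, which is what being a reaping family for the induced partition gives after noting the partition is coded by a real in $W$). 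Then $[U]^2 \in \mathcal G_\mathrm c^+$ reaps $X$.

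The main obstacle, and the step I would be most careful about, is the last one: making precise in what sense $\mathcal G_\mathrm c$ sits below $\mathcal U^*$ so that indestructibility transfers. The issue is that $\mathcal U^*$ lives on $\omega$ while $\mathcal G_\mathrm c$ lives on $[\omega]^2$, and the natural witnessing map needs the Ramsey property of $\mathcal U$ in the extension $W$ — but $W$ need not be a Sacks extension of $V^{\mathbf Q}$ in any controlled way, only an extension in which the reaping family $\mathcal U$ survives. The fix is to observe that ``$\mathcal U$ reaps every subset of $\omega$'' together with the existence, for every $2$-coloring of $[\omega]^2$ in $W$, of a ground-model structure forces a homogeneous $U \in \mathcal U$: indeed a $2$-coloring of $[\omega]^2$ is coded by a subset of $\omega$, and one can recover homogeneity from reaping properties by a pigeonhole/fusion argument — but if this is delicate, the safe route is to restrict attention to $W$ being a Sacks extension (which is all Theorem~\ref{thm:HL-char} needs, via condition~(\ref{thm:HL:cond-Sacksind})) and use that selective ultrafilters remain selective after Sacks forcing, which is part of the Baumgartner--Laver analysis. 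I expect the written proof will simply say: $\mathcal U$ is selective, selective ultrafilters are Sacks indestructible, $\mathcal G_\mathrm c \leq_{\mathrm K} \mathcal U^*$, apply Lemma~\ref{lem:katetov-HLideals}.
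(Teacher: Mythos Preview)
Your approach is essentially the paper's: force (if necessary) with a $\sigma$-closed poset to obtain a Ramsey ultrafilter $\mathcal U$, then verify directly that $\mathcal G_\mathrm c^+$ reaps in the Sacks extension by using that $\mathcal U$ remains Ramsey there and picking $U \in \mathcal U$ homogeneous for the given $X \subseteq [\omega]^2$, so that $[U]^2 \in \mathcal G_\mathrm c^+ \cap V$ reaps $X$. The paper forgoes your Kat\v{e}tov detour (which, as you yourself note, does not go through cleanly since $\mathcal U^*$ and $\mathcal G_\mathrm c$ live on different underlying sets) and simply forces with $\pw{\omega}/\fin$ rather than $(\mathcal G_\mathrm c^+, \subseteq^*)$, but the core argument is the same.
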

\begin{proof}
	We can assume that there exists a Ramsey ultrafilter $\mathcal U$. 
	For if not, pass to a generic extension with no new reals and a Ramsey ultrafilter (by $\pw{\omega}/\fin$).
	The HL~property is absolute between the ground model and the extension. 
	We will show that $\mathcal G_\mathrm c^+$ remains a reaping family after adding a Sacks real. 
	Suppose $X \subset {[\omega]}^2$ is a set in a Sack extension. 
	Since $\mathcal U$ is Sacks indestructible (as first proved by Baumgartner and Laver~\cite{Baumgartner-Laver}) 
	and remains Ramsey in the extension, 
	there exists $A \in \mathcal U$ such that ${[A]}^2$ reaps $X$. 
	Notice that ${[A]}^2 \in \mathcal G_\mathrm c^+ \cap V$.
\end{proof}

Lemma~\ref{lem:katetov-HLideals} now gives us:

\begin{corollary}
	The ideals $\fin\times\fin$, $\mathcal{ED}$, $\conv$ are HL\@.
\end{corollary}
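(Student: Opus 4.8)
The plan is to read the three claims off of Proposition~\ref{prop:Gc} together with Lemma~\ref{lem:katetov-HLideals}. Proposition~\ref{prop:Gc} says that $\mathcal{G}_{\mathrm c}$ is HL, and Lemma~\ref{lem:katetov-HLideals} says that the HL property passes downwards in the Katětov order; so it suffices to observe that each of $\fin\times\fin$, $\mathcal{ED}$ and $\conv$ is Katětov below $\mathcal{G}_{\mathrm c}$.

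These relations are precisely the ones recorded in the Katětov diagram in the introduction (and established in~\cite{Hrusak-order_on_ideals,Hrusak-Hernandez*2}): there is a Katětov morphism witnessing $\fin\times\fin \leq_{\mathrm K} \mathcal{G}_{\mathrm c}$, and there are Katětov morphisms witnessing $\mathcal{ED} \leq_{\mathrm K} \fin\times\fin$ and $\conv \leq_{\mathrm K} \fin\times\fin$. Since $\leq_{\mathrm K}$ is transitive, the last two combine with the first to give $\mathcal{ED} \leq_{\mathrm K} \mathcal{G}_{\mathrm c}$ and $\conv \leq_{\mathrm K} \mathcal{G}_{\mathrm c}$ as well. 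Feeding each of the three relations into Lemma~\ref{lem:katetov-HLideals} then yields that $\fin\times\fin$, $\mathcal{ED}$ and $\conv$ are HL; I will simply quote the morphisms from the cited literature rather than reproduce the (standard) verifications.

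There is no real obstacle here: all of the combinatorial content already sits in Proposition~\ref{prop:Gc}, and the only remaining ingredient is the classical Katětov-order picture, so the one thing to be careful about is invoking the right morphisms. For the record, $\mathcal{ED}$ is moreover an F$_\sigmaup$ ideal, hence P$^+$, and is therefore already covered by Proposition~\ref{prop:P+}; only $\fin\times\fin$ and $\conv$ genuinely require the detour through $\mathcal{G}_{\mathrm c}$.
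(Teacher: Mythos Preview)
Your proposal is correct and matches the paper's own reasoning exactly: the paper states this corollary immediately after Proposition~\ref{prop:Gc} with the one-line justification ``Lemma~\ref{lem:katetov-HLideals} now gives us,'' i.e.\ combine the HL property of $\mathcal G_{\mathrm c}$ with the Kat\v{e}tov relations $\conv \leq_{\mathrm K} \fin\times\fin$, $\mathcal{ED} \leq_{\mathrm K} \fin\times\fin$, and $\fin\times\fin \leq_{\mathrm K} \mathcal G_{\mathrm c}$ from the diagram. Your additional remark that $\mathcal{ED}$ is already covered by Proposition~\ref{prop:P+} is a correct side observation not made in the paper.
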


For the next result we will need a game introduced by Laflamme~\cite{Laflamme-games}. 
Suppose $\mathcal I$ is an ideal on $\omega$. The game $G(\mathcal I)$ associated 
to $\mathcal I$ takes $\omega$ many rounds and proceeds as follows: 
At round $n$ player~I chooses $I_n \in \mathcal I$ and player~II 
responds by choosing $k_n \in \omega \setminus I_n$. 
\medskip
\begin{center}
\begin{tabular}{|l || l | l | l | l| l ||l|l|} 
\hline
player I & 
${I_0}\in \mathcal I$ & 
${I_1}\in \mathcal I$ & 
$\ldots$ &
${I_n}\in \mathcal I$ &
$\ldots$ &
$\os {k_n \mid n \in \omega} \cs \in \mathcal I$
\\ 
\hline
player II & 
$k_0 \notin I_0$ & 
$k_1 \notin I_1$ & 
$\ldots$ &
$k_n \notin I_n$ &
$\ldots$ &
$\os {k_n \mid n \in \omega} \cs \notin \mathcal I$
\\ 
\hline
\end{tabular} 
\end{center}
\medskip
Player~I wins if $\os {k_n \mid n \in \omega} \cs \in \mathcal I$, otherwise player~II wins.
We will use a result of Hrušák which is contained in the proof 
of the category dichotomy theorem~\cite{hrusak-combinatorics,Hrusak-order_on_ideals}.

\begin{proposition}[Hrušák~\cite{Hrusak-order_on_ideals}]
	Let $\mathcal I$ be an ideal, if player~I has a winning strategy in the game $G(\mathcal I)$, 
	then there exists $X \in \mathcal I^+$ such that $\mathcal{ED} \leq_{\mathrm K} \mathcal I \restriction X$.
\end{proposition}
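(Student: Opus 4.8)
The plan is to take a fixed winning strategy $\sigmaup$ for player~I in $G(\mathcal I)$ and use it to build, by a tree recursion indexed by $\omega^{{<}\omega}$, a system of partial plays of the game. At a node $s \in \omega^{{<}\omega}$ that has been reached, player~I (following $\sigmaup$) has just played some set $I_s \in \mathcal I$; for each $k \in \omega \setminus I_s$ we let player~II respond with $k$, obtaining the child node $s\conc k$ and the next move $I_{s\conc k} = \sigmaup(\,\text{play so far}\,)$. After each node we also throw the finitely many ``forbidden'' values from the earlier moves along the branch into the picture. Crucially, since $\sigmaup$ is winning, \emph{every} branch $x \in \omega^\omega$ of this tree produces a play in which player~I wins, so the set of values $\{k_n \mid n \in \omega\}$ played by player~II along $x$ lies in $\mathcal I$. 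The idea is to extract from this tree a copy of $\mathcal{ED}$ living on a coideal set.

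Concretely, I would thin the tree to a finitely-branching (indeed, $\omega$-branching but with a tidy level structure) subtree so that the $n$-th level consists of pairwise distinct natural numbers, the map sending a node to its last coordinate is injective on each level, and along any branch the values played strictly increase past all the $I$'s seen so far. Let $X$ be the set of all natural numbers appearing as some node's last coordinate (equivalently, the union over all nodes $s\neq\emptyset$ of $\{$last entry of $s\}$); arrange the recursion so that $X \in \mathcal I^+$ — this is exactly the content of player~I \emph{not} being able to win by a trivial strategy and is guaranteed because at the first move $\omega \setminus I_\emptyset$ is already in $\mathcal I^+$, and we can keep the branching sets large at every stage. Then I would define a Katětov morphism $f \colon \omega^2 \to X$ (or rather from $\mathcal{ED}$'s underlying set into $X$) by sending the $m$-th element of the $n$-th ``column'' to the value played by player~II at the appropriate node of level $n$ reached after $m$ steps of branching, so that: (i) the preimage of each singleton/column is contained in one level of the tree, hence an $\mathcal{ED}$-small set maps to something whose intersection with each branch is finite in a controlled way; (ii) the preimage of a graph of a function corresponds to choosing one value per level, i.e.\ to a branch of the tree together with finitely much slack, whose image therefore lies in $\mathcal I$ because $\sigmaup$ is winning. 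This shows $f^{-1}[Y] \in \mathcal{ED}\restriction(\text{domain})$ translates correctly, i.e.\ $\mathcal{ED} \leq_{\mathrm K} \mathcal I \restriction X$.

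The main obstacle — and the step deserving the most care — is the bookkeeping that simultaneously (a) keeps $X$ in $\mathcal I^+$, (b) makes the column structure of $\mathcal{ED}$ match the level structure of the tree, and (c) ensures that images of \emph{graphs of functions} really do land in $\mathcal I$. For (c) the point is that a graph of a function picks at most one point from each column, hence (under $f$) at most finitely many points from each level of the tree that lie below any fixed branch; but a graph can still meet many different branches, so one must argue that the relevant set is covered by finitely many ``branch value sets'' \emph{up to a set in $\mathcal I$} — this is where one invokes that $\mathcal I$ is closed under finite unions and that each branch's value set is in $\mathcal I$, combined with a compactness/König argument on the finitely-branching thinned tree to reduce an arbitrary graph to finitely many branches. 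For (a), I would maintain during the recursion that the set of last-coordinates appearing at level $n+1$, relative to level $n$, is chosen from $\omega \setminus I_s$ which is in $\mathcal I^+$, and take $X$ to be (a diagonal of) these; the genericity of the choice — picking the value sets to be ``as large as possible'' at each node — guarantees $X \notin \mathcal I$. Once these three are aligned, the verification that $f$ is a Katětov morphism is routine.
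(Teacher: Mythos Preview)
The paper does not prove this proposition; it merely quotes it from Hru\v{s}\'ak's work on the category dichotomy, so there is no ``paper's proof'' to compare against. Your overall strategy---unfold a winning strategy $\sigma$ into a tree of partial plays and exploit that every branch yields an $\mathcal I$-set---is indeed the standard opening move. But two genuine gaps remain.

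First, the direction of the Kat\v{e}tov morphism is reversed: with the paper's convention, $\mathcal{ED} \leq_{\mathrm K} \mathcal I \restriction X$ requires $f \colon X \to \omega^2$, not $f \colon \omega^2 \to X$. More importantly, your argument that $X \in \mathcal I^+$ (``$\omega \setminus I_\emptyset$ is already in $\mathcal I^+$'') is in direct tension with your requirement that preimages of columns lie in $\mathcal I$. If the first level of your tree is all of $\omega \setminus I_\emptyset$, then that level is $\mathcal I$-positive and cannot serve as a column-preimage; if you thin it down into $\mathcal I$, your stated reason for $X \in \mathcal I^+$ evaporates. You need a genuinely different mechanism to secure $X \in \mathcal I^+$.

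Second, and more seriously, the K\"onig/compactness step is wrong. A selector choosing one node per level of a finitely-branching tree is \emph{not} covered by finitely many branches, even modulo any reasonable small set. Concretely, in $2^{<\omega}$ the selector $s_n = 0^{n-1}1$ is an infinite antichain; any branch meets it in at most one point, so no finite collection of branches covers it. Translated back, such a selector does not correspond to a play of the game against $\sigma$, so the hypothesis that $\sigma$ is winning gives you nothing. The actual construction has to arrange matters so that \emph{every} transversal of the column-sets really is (essentially) the sequence of player~II responses in a single run against $\sigma$; this forces the column-sets $A_n$ to be chosen along one evolving play, with $A_{n+1}$ determined only after the relevant information from $A_n$ is fixed---not as parallel levels of a static tree.
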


\begin{proposition}
	If an ideal $\mathcal I$ is not HL, then there exists 
	$X \in \mathcal I^+$ such that $\mathcal{ED} \leq_{\mathrm K} \mathcal I \restriction X$.
\end{proposition}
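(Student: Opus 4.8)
The strategy is to show the contrapositive in a useful form: if $\mathcal I$ is not HL, then player~I has a winning strategy in the game $G(\mathcal I)$, and then invoke the preceding proposition of Hrušák to obtain $X \in \mathcal I^+$ with $\mathcal{ED} \leq_{\mathrm K} \mathcal I \restriction X$. So the whole content is the construction of a winning strategy for player~I in $G(\mathcal I)$ out of a witness to the failure of HL.

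\emph{Producing the witness.} By Observation~\ref{obs:Ic} (and the remark following it), non-HL gives us $c \colon 2^{{<}\omega} \to 2$ and, if we like, a condition $q \in \mathbf S$, such that every set of the form $H_c(p) = \os n \mid c \text{ is constant on } p \restriction \os n\cs \cs$ for $p \leq q$ lies in $\mathcal I$. Equivalently, for \emph{every} perfect subtree $p$ of $q$ the ``color-homogeneous-by-levels'' set $H_c(p)$ is $\mathcal I$-small. The game strategy will use this as follows: during the play player~I will build, round by round, a finite approximation to a perfect subtree of $q$, and at round $n$ the set $I_n \in \mathcal I$ he plays will essentially be $H_c(p)$ for a perfect $p \leq q$ consistent with player~II's moves so far, together with a few bookkeeping finite sets. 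The key local fact to extract first, via Proposition~\ref{prop:HLthm}, is that inside any given perfect tree there is a further perfect subtree and an infinite level set on which $c$ is constant — that is what forces player~II, who must pick $k_n \notin I_n$, to keep steering the construction into a \emph{fixed} perfect subtree while producing only finitely many of its homogeneous levels; I would arrange $I_n$ so that every legal response $k_n$ is a level at which $c$ takes a value \emph{opposite} to the eventual homogeneous color of some branch through the tree being built, and hence the final set $\set{k_n \mid n \in \omega}$, being contained in $H_c(\cdot)$ of a perfect subtree of $q$ (up to a finite set), lies in $\mathcal I$ — i.e.\ player~I wins.

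\emph{The fusion bookkeeping.} More concretely: player~I maintains a fusion sequence of finite trees $q \supseteq q_0 \supseteq \cdots$ with an increasing set of ``split levels,'' so that at the limit the downward closure of $\bigcup q_n$ is a perfect $p \leq q$; he interleaves into the $I_n$'s the set $H_c(p_n)$ computed from the current finite stage (together with the finitely many already-committed levels) and uses player~II's move $k_n$ to decide how to extend the finite tree past level $k_n$, always choosing the extension whose new nodes have $c$-value making $k_n$ \emph{non}-homogeneous for the part of $p$ built later. The point is that whatever player~II plays, $k_n$ never ends up in $H_c(p)$, so $\set{k_n \mid n} \cap H_c(p)$ is finite; since $\mathcal I$ contains finite sets and $H_c(p) \in \mathcal I$, we get $\set{k_n \mid n} \in \mathcal I$. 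Hence player~I wins $G(\mathcal I)$, and Hrušák's proposition finishes the argument.

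\emph{The main obstacle.} The delicate point is the simultaneous bookkeeping: player~I must at each round commit to an $\mathcal I$-set $I_n$ that both (i) is large enough to prevent player~II from diagonalizing out of the tree being built, yet (ii) has the property that \emph{any} admissible response $k_n \notin I_n$ can be absorbed into the fusion so as to remain non-homogeneous for the final tree. Making (i) and (ii) compatible is exactly where the consequence of the Halpern–Läuchli theorem (Proposition~\ref{prop:HLthm}), applied \emph{inside} the current finite stage's cone, is needed — it guarantees that homogeneous levels are unavoidable in any perfect refinement, so that the non-homogeneous levels (which are what $k_n$ must be forced to be) form a ``large'' target that $I_n$ can leave uncovered. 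Getting the quantifier order right in this fusion — essentially, that player~I can always play an $I_n \in \mathcal I$ leaving room only for non-homogeneous choices — is the crux; everything else is routine fusion and an appeal to the cited results.
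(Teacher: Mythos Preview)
Your high-level framework matches the paper's: reduce to Hru\v{s}\'ak's proposition via the game $G(\mathcal I)$. (The paper phrases it as the contrapositive --- assume player~I has no winning strategy and derive HL --- but the content is the same.) However, your strategy description contains a genuine error, and your two bookkeeping paragraphs contradict each other.

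In the fusion paragraph you have player~I arrange that each $k_n$ is \emph{non}-homogeneous for the tree $p$ being built, so that $\{k_n \mid n\} \cap H_c(p)$ is finite, and then conclude ``since $H_c(p) \in \mathcal I$, we get $\{k_n \mid n\} \in \mathcal I$.'' That implication is simply false: a set almost disjoint from an element of $\mathcal I$ need not lie in $\mathcal I$. For player~I to win you need $\{k_n \mid n\}$ contained (mod finite) in some $H_c(p)$, i.e., each $k_n$ must end up \emph{homogeneous} for the final tree --- the opposite of what you wrote. Your first paragraph hints at this correct direction, but neither paragraph identifies an $I_n$ that actually works: playing $H_c(p_n)$ for some perfect $p_n$ extending the current finite stage does not guarantee that a response $k_n \notin H_c(p_n)$ can be absorbed homogeneously \emph{and with a new split} above every current terminal node of the fusion.

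The paper's move is this. For $s \in 2^{<\omega}$ let $I(s)$ be the set of levels $n > \card{s}+1$ at which at most one extension of $s$ in $2^n$ has $c$-value~$0$. A preliminary case split is required: if some $I(s) \in \mathcal I^+$, one directly builds a color-$1$-homogeneous perfect tree above $s$ along a thinned $\mathcal I^+$-subset of $I(s)$ --- but that yields HL for this $c$, so under your non-HL hypothesis every $I(s) \in \mathcal I$. Now player~I, with current terminal nodes $\{s(t) : t \in 2^n\}$, plays $I_n = \bigcup_{t} I(s(t)) \in \mathcal I$. Any response $k_n \notin I_n$ gives at least two color-$0$ extensions of each $s(t)$ at level $k_n$, so player~I can split every terminal node while keeping level $k_n$ monochromatic. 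The fusion limit $p$ is perfect with $\{k_n \mid n\} \subseteq H_c(p) \in \mathcal I$, so player~I wins. Your sketch missed both the correct $I_n$ and the case distinction needed to place it in $\mathcal I$.
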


Note that the ideal $\mathcal{ED}$ is a HL ideal.

\begin{proof}
	Assume than an ideal $\mathcal I$ does not fulfill the conclusion of the proposition 
	and consequently player~I does not have a winning strategy in the game $G(\mathcal I)$. 
	We will show that $\mathcal I$ is a HL ideal.
	Let $c \colon 2^{{<}\omega} \to 2$ be a function, 
	for every $s \in 2^{{<}\omega}$ let $I(s)$ be the set of all $n \in \omega$, $n > \card{s} + 1 $ 
	such that there exists at most one $t \in 2^n$, $t \supset s$, and $c(t) = 0$. 

	We will consider two cases; first assume there is $s \in 2^{{<}\omega}$ such that $I(s) \in \mathcal I^+$. 
	Choose $A = \os a(n) \mid n \in \omega \cs \in \pw{I(s)} \cap \mathcal I^+$ 
	such that  $a(n) + 1 <a({n+1})$ for each $n \in \omega$.
	We will recursively construct a tree $S = \os s(t) \in 2^{{<}\omega} \mid t \in 2^{{<}\omega} \cs$.

	Put $s(\emptyset) = s$ and choose $s(\langle\, 0 \,\rangle)$, $s(\langle\, 1 \,\rangle) \in 2^{a(0)}$ 
	as two different extensions of $s$ such that $c(s(\langle\, 0 \,\rangle)) = c(s(\langle\, 1 \,\rangle)) = 1$. 
	This is possible since $s$ has at least $4$ extensions in $2^{a(0)}$ 
	and $c$ has value $0$ in at most one of these extensions. 
	Similarly, if $s(t) \in 2^{a(n)}$ is constructed for $t \in 2^{n+1}$, 
	choose $s(t\conc i) \in 2^{a(n+1)}$, $s(t\conc i) \supset s(t)$, $c(s(t\conc i)) = 1$ for $i \in 2$; 
	two different extensions of $s(t)$. 
	Finally let $p$ be the downwards closure of $S$. The construction implies that $p$ is a Sacks tree 
	and $c$ has constant value $1$ on $p \restriction A$.

	For the other case assume that $I(s) \in \mathcal I$ for each $s \in 2^{{<}\omega}$. 
	We will define a strategy for player~I in the game $G(\mathcal I)$. 
	While playing, player~I simultaneously constructs a tree 
	$S = \os s(t) \in 2^{{<}\omega} \mid t \in 2^{{<}\omega} \cs$. 
	She starts by declaring $s(\emptyset) = \emptyset$, and in general, 
	before the round $n$ of the game is played, player~I had already constructed $\os s(t) \mid t \in 2^n \cs$. 
	In round $n$ the move of player~I is the set $\bigcup \os I(s(t)) \mid t \in 2^n \cs \in \mathcal I$. 
	Suppose the response of player~II is $k_n$. 
	Since $k_n \notin I(s(t))$ for each $t \in 2^n$, 
	player~I can choose a set of pairwise different sequences 
	$\os s(t) \mid t \in 2^{n+1} \cs \subset 2^{k_n}$ such that 
	$s(t) \subset s(t\conc i)$ and $c(s(t\conc i)) = 0$ for each $t \in 2^n $ and $i\in 2$. 

	As the described strategy of player~I cannot be winning, 
	we can assume that a match of $G(\mathcal I)$ was played, 
	player~I followed the strategy, constructed tree $S$, and lost; 
	$K = \os k_n \mid n \in \omega\cs \in \mathcal I^+$. 
	Let $p$ be the downwards closure of $S$.  
	The construction again implies that $p$ is a Sacks tree 
	and $c$ has constant value $0$ on $p \restriction (K \setminus 1)$.
\end{proof}

\begin{corollary}
	The ideal $\nwd$ is HL\@.
\end{corollary}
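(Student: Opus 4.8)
The plan is to deduce the corollary from the immediately preceding proposition via the Katětov machinery already established. The proposition says that if an ideal is not HL, then it restricts on some positive set to something Katětov above $\mathcal{ED}$. I would argue by contradiction: suppose $\nwd$ is not HL. Then there is $X \in \nwd^+$ with $\mathcal{ED} \leq_{\mathrm K} \nwd \restriction X$. The key point is that the restriction of $\nwd$ to any $\nwd$-positive set is isomorphic (as an ideal) to $\nwd$ itself — an $\nwd$-positive subset of $\mathbb Q$ is somewhere dense, hence contains a set homeomorphic to $\mathbb Q$ on which the induced ideal of nowhere dense sets is just $\nwd$ again. So we would get $\mathcal{ED} \leq_{\mathrm K} \nwd$.

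Next I would rule this out. The standard fact (from the Katětov-order literature cited, e.g.~\cite{Hrusak-order_on_ideals}) is that $\mathcal{ED} \nleq_{\mathrm K} \nwd$; equivalently $\nwd$ is an $\mathcal{ED}$-ideal in the sense that no Katětov morphism from $\mathcal{ED}$ into $\nwd$ exists. This is a known non-relation in the diagram of definable ideals. Combining, we reach a contradiction, so $\nwd$ must be HL.

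Alternatively — and this is probably the cleaner route to write — I would invoke the Katětov diagram directly together with Lemma~\ref{lem:katetov-HLideals}. We have already shown $\fin \times \fin$ is HL (indeed $\conv$, $\mathcal{ED}$, $\fin \times \fin$ are all HL via the P$^+$ argument and Lemma~\ref{lem:katetov-HLideals}), and $\nwd \leq_{\mathrm K} \conv$ by the diagram in the excerpt (the arrow $\conv \to \nwd$ indicates a Katětov morphism with $\conv$ on top, i.e.\ $\nwd \leq_{\mathrm K} \conv$). Since $\conv$ is HL, Lemma~\ref{lem:katetov-HLideals} immediately gives that $\nwd$ is HL. In fact the preceding corollary already records that $\conv$ is HL, so this is a one-line deduction.

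The main subtlety — the step I would be most careful about — is the direction of the Katětov arrows: I must make sure that the arrow between $\conv$ and $\nwd$ in the displayed figure really witnesses $\nwd \leq_{\mathrm K} \conv$ and not the reverse, since Lemma~\ref{lem:katetov-HLideals} only pushes the HL property \emph{downward} in $\leq_{\mathrm K}$. Given the figure's convention that arrows point in the direction of existing Katětov morphisms and that $\conv$ sits at the bottom as a Katětov-minimal-type ideal, the relation is $\nwd \leq_{\mathrm K} \conv$, so the deduction goes through. Thus the proof is simply: $\conv$ is HL by the previous corollary, $\nwd \leq_{\mathrm K} \conv$, and Lemma~\ref{lem:katetov-HLideals} finishes.
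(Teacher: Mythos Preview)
Your first approach is correct and is precisely the paper's proof: use the preceding proposition to get $\mathcal{ED} \leq_{\mathrm K} \nwd \restriction X$ for some $X \in \nwd^+$, observe that $\nwd \restriction X$ is isomorphic to $\nwd$ for every positive $X$, and invoke the known fact that $\mathcal{ED} \nleq_{\mathrm K} \nwd$.

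Your alternative route, however, does not work, and the subtlety you flag is exactly where it fails. The arrow in the diagram between $\conv$ and $\nwd$ encodes $\conv \leq_{\mathrm K} \nwd$, not $\nwd \leq_{\mathrm K} \conv$: since $\conv \subseteq \nwd$ as ideals on $\mathbb Q$, the identity map is already a Katětov morphism witnessing $\conv \leq_{\mathrm K} \nwd$, and the paper explicitly states that all provable Katětov relations among the listed ideals are drawn, so the reverse relation is not available. Lemma~\ref{lem:katetov-HLideals} transfers the HL property \emph{downward} in $\leq_{\mathrm K}$, so from $\conv$ being HL you can conclude nothing about $\nwd$. This is also why the paper needs the game-theoretic proposition to handle $\nwd$ rather than simply quoting the earlier corollary about $\conv$. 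Stick with your first argument.
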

\begin{proof}
	For every $X \in \nwd^+$ the ideal $\nwd \restriction X$ is isomorphic to $\nwd$ 
	and $\mathcal{ED} \not\leq_{\mathrm K} \nwd$, see~\cite{Hrusak-order_on_ideals}.
\end{proof}

We do have a similar result for the ideal $\conv$. 
Let us first recall a theorem proved by Meza Alcántara.

\begin{theorem}[Meza Alcántara.~\cite{Meza-thesis}, see also~\cite{Hrusak-etal_Ramsey}]\label{thm:conv-char}
	For an ideal $\mathcal I \subset \pw{\omega}$ the following are equivalent.
	\begin{enumerate}
		\item $\conv \leq_{\mathrm K} \mathcal I$
		\item There is a countable family $\mathcal X \subset {[\omega]}^\omega$ 
		such that for each $Y \in \mathcal I^+$ there is $X \in \mathcal X$ which splits $Y$.
	\end{enumerate}
\end{theorem}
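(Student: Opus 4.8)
The plan is to prove the two implications separately. The implication $(1)\Rightarrow(2)$ is routine, so I would dispatch it first; the substance sits in $(2)\Rightarrow(1)$.

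For $(1)\Rightarrow(2)$: suppose $f\colon\omega\to\mathbb Q\cap[0,1]$ is a Katětov morphism witnessing $\conv\leq_{\mathrm K}\mathcal I$. I would fix a countable base $\{U_n\mid n\in\omega\}$ of the space $\mathbb Q\cap[0,1]$ consisting of traces of open intervals and set $\mathcal X=\{f^{-1}[U_n]\mid n\in\omega\}\cap{[\omega]}^\omega$, a countable subfamily of ${[\omega]}^\omega$. Given $Y\in\mathcal I^+$, one first observes $f[Y]\notin\conv$, since otherwise $Y\subseteq f^{-1}[f[Y]]\in\mathcal I$; hence $f[Y]$ has at least two distinct accumulation points $a\neq b$ in $[0,1]$. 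Picking $U_n$ with $a\in U_n$ and $b\notin\overline{U_n}$ makes $f[Y]\cap U_n$ infinite (as $a$ is an accumulation point of $f[Y]$) and $f[Y]\setminus U_n$ infinite (a neighbourhood of $b$ missing $\overline{U_n}$ contains infinitely many points of $f[Y]$, none in $U_n$), so $f^{-1}[U_n]$ is an infinite set in $\mathcal X$ that splits $Y$; thus $\mathcal X$ witnesses $(2)$.

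For $(2)\Rightarrow(1)$: I would first pass to a combinatorial presentation of $\conv$. It is standard that $\conv$ is Katětov equivalent to the ideal $\mathcal C$ on $2^{{<}\omega}$ generated by the branches $b_x=\{x\restriction n\mid n\in\omega\}$ ($x\in2^\omega$); the only half I need is that the injection $e(s)=\sum_{i<\card{s}}s(i)\,2^{-i-1}+2^{-\card{s}-1}$ of $2^{{<}\omega}$ into the dyadic rationals pulls every $\conv$-set back into $\mathcal C$ — a branch maps to a convergent sequence, and a set with finitely many accumulation points in $[0,1]$ pulls back to a set whose downward closure in $2^{{<}\omega}$ has only finitely many infinite branches. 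So it suffices to build a Katětov morphism $\mathcal C\leq_{\mathrm K}\mathcal I$ and compose. Enumerating $\mathcal X=\{X_n\mid n\in\omega\}$, I would take $f\colon\omega\to2^{{<}\omega}$ with $f(m)=\sq{\chi_{X_n}(m)\mid n<m}$, the characteristic vector of $m$ with respect to $X_0,\dots,X_{m-1}$. Every member of $\mathcal C$ is contained in $b_{x_1}\cup\dots\cup b_{x_k}\cup F$ for some branches and some finite $F\subseteq2^{{<}\omega}$, and $f^{-1}[F]$ is finite because $f(m)\in2^m$, so it is enough to see $f^{-1}[b_{x_1}\cup\dots\cup b_{x_k}]\in\mathcal I$. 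If not, fix $Y\in\mathcal I^+$ contained in this preimage; then $Y=\bigcup_{i\leq k}Y_i$ with $Y_i=\{m\in Y\mid f(m)=x_i\restriction m\}$, so some $Y_i\in\mathcal I^+$, and by the splitting hypothesis $Y_i$ is split by some $X_{n_0}$. Choosing $m\in Y_i\cap X_{n_0}$ and $m'\in Y_i\setminus X_{n_0}$ with $m,m'>n_0$ then yields $x_i(n_0)=\chi_{X_{n_0}}(m)=1$ and $x_i(n_0)=\chi_{X_{n_0}}(m')=0$, a contradiction.

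I expect the conceptual core to be precisely the construction in the last paragraph: hypothesis $(2)$ says exactly that no $\mathcal I$-positive set can follow a single ``branch'' of memberships in $\mathcal X$, which is what keeps its $f$-image from being absorbed by finitely many branches of $2^{{<}\omega}$. The one genuinely fiddly ingredient is the standard identification of $\conv$ with the branch ideal $\mathcal C$ (matching accumulation points in $[0,1]$ with infinite branches of a tree); this is pure bookkeeping that never touches $\mathcal I$, so I would treat it as a black box, or, if a self-contained account were wanted, carry it out separately from the main line.
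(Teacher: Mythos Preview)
The paper does not prove this theorem; it is quoted from Meza Alc\'antara's thesis and used as a black box. So there is nothing to compare against, and your attempt has to stand on its own.

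Your $(1)\Rightarrow(2)$ is fine. The $(2)\Rightarrow(1)$ direction, however, has a genuine gap: the branch ideal $\mathcal C$ on $2^{<\omega}$ is \emph{not} Kat\v{e}tov equivalent to $\conv$; in fact $\conv\nleq_{\mathrm K}\mathcal C$. To see this directly, note that every infinite antichain $A\subseteq 2^{<\omega}$ lies in $\mathcal C^+$, and for any $g\colon 2^{<\omega}\to\mathbb Q\cap[0,1]$ compactness gives an infinite $A'\subseteq A$ (still an antichain, still $\mathcal C^+$) with $g[A']$ convergent, so $g$ cannot be a Kat\v{e}tov morphism. Concretely, your own map $e$ sends the antichain $\{0^n1:n\in\omega\}\in\mathcal C^+$ to the convergent sequence $\{3\cdot 2^{-n-2}:n\in\omega\}\in\conv$. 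The phrase you use to justify $\conv\leq_{\mathrm K}\mathcal C$ --- ``a set whose downward closure in $2^{<\omega}$ has only finitely many infinite branches'' --- actually describes a strictly larger ideal than $\mathcal C$ (the set $\{0^n1:n\in\omega\}$ satisfies that description but is not covered by finitely many branches), and it is \emph{this} larger ideal that is Kat\v{e}tov equivalent to $\conv$. Your argument for $f$ only handles preimages of finite unions of branches, i.e., only shows $\mathcal C\leq_{\mathrm K}\mathcal I$, which does not yield $\conv\leq_{\mathrm K}\mathcal I$.

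The fix is short and uses your same $f$. Show that for every $Y\in\mathcal I^+$ the set $f[Y]$ has infinitely many accumulation branches (equivalently, $e\circ f[Y]$ has infinitely many accumulation points in $[0,1]$). If not, reduce as you do to a single accumulation branch $a\in 2^\omega$: then for every $n$ all but finitely many $m\in Y$ satisfy $f(m)\supseteq a\restriction n$, hence $\chi_{X_i}(m)=a(i)$ for all $i<n$; in particular each $X_i$ fails to split $Y$, contradicting~(2). This is the missing step; once you prove it, composing with $e$ gives the Kat\v{e}tov morphism you want.
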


\begin{proposition}
	If an ideal $\mathcal I$ is not HL, then $\conv \leq_{\mathrm K} \mathcal I$.
\end{proposition}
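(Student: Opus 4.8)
The plan is to contrapose: assume $\mathcal I$ is not HL and produce the countable splitting family required by condition~(2) of Theorem~\ref{thm:conv-char}. By Observation~\ref{obs:Ic} there is a coloring $c \colon 2^{{<}\omega} \to 2$ with $\mathcal I_c \subseteq \mathcal I$, where $\mathcal I_c$ is generated by the sets $H_c(p) = \os n \mid c \text{ constant on } p \restriction \os n\cs$ for $p \in \mathbf S$. Equivalently, no pair $(p, A)$ with $p \in \mathbf S$ and $A \in \mathcal I^+$ has $c$ constant on $p \restriction A$; in particular for every $A \in \mathcal I^+$ and every $p \in \mathbf S$ there is some level $n \in A$ where $p$ contains nodes of both colors.

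The key step is to extract, from the single coloring $c$, a \emph{countable} family $\mathcal X \subset {[\omega]}^\omega$ that splits every $\mathcal I^+$ set. The natural candidate is indexed by finite binary sequences: for $t \in 2^{{<}\omega}$ and $i \in 2$ let $X(t,i) = \os n > \card t \mid (\exists s \in 2^n)\, t\conc i \subseteq s \text{ and } c(s) = 0\cs$ (or some similar combinatorial reading of where the color $0$ appears above $t$ on the $i$-side), and take $\mathcal X = \os X(t,i) \mid t \in 2^{{<}\omega}, i \in 2\cs$ together with the analogous sets for color $1$. I would then argue: given $Y \in \mathcal I^+$, suppose toward a contradiction that no member of $\mathcal X$ splits $Y$, i.e.\ for each $(t,i)$ either $Y \subseteq^* X(t,i)$ or $Y \cap X(t,i)$ is finite. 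Using these "decided" alternatives one can run a fusion-style recursion exactly as in the proof of the previous proposition (building a tree $S = \os s(t) \mid t \in 2^{{<}\omega}\cs$ with levels chosen inside $Y$, at each step using that the relevant $X(t,i)$ either almost-contains $Y$ or is almost-disjoint from it to force the chosen extensions all to have the same $c$-value), yielding a Sacks tree $p$ and an infinite $A \subseteq Y$, $A \in \mathcal I^+$ (since $Y \in \mathcal I^+$ and $A$ is cofinite in $Y$, or more carefully $A$ is chosen in $\mathcal I^+$ directly), with $c$ constant on $p \restriction A$ — contradicting $\mathcal I_c \subseteq \mathcal I$.

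The main obstacle is making the bookkeeping in that fusion clean: at the $n$-th stage we have $2^{n}$ nodes $s(t)$, $t \in 2^n$, and we must pick a common next level $a(n) \in Y$ above which, \emph{simultaneously} for all $t \in 2^n$ and both $i \in 2$, the $c$-value $0$ (or uniformly $1$) is available on extensions of $s(t)\conc i$. The "no splitting" hypothesis only gives, for each fixed $(t,i)$, that $Y$ is eventually inside or eventually outside $X(t,i)$; finitely many such $(t,i)$ at stage $n$ can be handled by passing to a tail of $Y$, but one must check that the two possible outcomes (inside vs.\ outside) are coherent across the $t$'s so that a single color can be chosen at each level — one may need to first thin $Y$ (inside $\mathcal I^+$, which is fine) so that every $X(t,i)$ is decided, and then note that whichever color is forced can only be one of $0$ or $1$ for each level, but possibly varying with the level; a further application of the fact that $\mathbf S$ is closed under restriction to infinite sets, picking the infinitely many levels of one fixed color, repairs this. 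The remaining verifications — that $p$ as constructed is genuinely a perfect tree, that $A \in \mathcal I^+$, and that the final configuration contradicts $\mathcal I_c \subseteq \mathcal I$ — are routine and parallel the earlier $\mathcal{ED}$ argument.

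Granting the construction, Theorem~\ref{thm:conv-char} immediately gives $\conv \leq_{\mathrm K} \mathcal I$, completing the contrapositive and hence the proof.
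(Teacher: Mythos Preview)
Your overall reduction via Theorem~\ref{thm:conv-char} matches the paper's, but the argument has a genuine gap, and it is not just bookkeeping.

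The fatal problem is the claim that the fusion yields $A \in \mathcal I^+$. A fusion of the kind you describe selects one level $a(n) \in Y$ at each stage, so the resulting $A = \{a(n) : n \in \omega\}$ is merely some infinite subset of $Y$; there is no reason whatsoever for it to lie in $\mathcal I^+$, and it is certainly not cofinite in $Y$. Your parenthetical escape hatch ``or more carefully $A$ is chosen in $\mathcal I^+$ directly'' has no mechanism behind it, and ``thin $Y$ inside $\mathcal I^+$'' is not a legal move: $\mathcal I^+$ is a coideal, not a filter. To contradict $\mathcal I_c \subseteq \mathcal I$ you would need $H_c(p)$ itself (or a finite union of such sets) to be in $\mathcal I^+$, which means your tree must be monochromatic on \emph{almost all} levels of $Y$, not just on the selected $a(n)$'s. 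Your sets $X(t,i)$ only tell you that \emph{some} extension of a given node has color $0$ at a given level; this existence information does not let you steer a perfect tree so that \emph{every} node of $p$ at every $Y$-level has the same color. Indeed, once you observe that your argument always lands in the ``all mixed'' case (if some $t^\frown i$ had all extensions of one color at almost all $Y$-levels, the full cone below it would already put $Y$ into $\mathcal I_c \subseteq \mathcal I$), you see that the information you have is precisely that both colors appear above every node at almost every $Y$-level --- which is useless for building a monochromatic tree.

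The paper's proof repairs both issues simultaneously. First, the countable family is indexed not by nodes but by a countable dense set of \emph{branches} $\mathcal X \subset 2^\omega$, with $K_i(x) = \{n : c(x \restriction n) = i\}$. If $Y$ is unsplit by these, then along each $x \in \mathcal X$ the color is eventually constant on $Y$-levels; density of $\mathcal X$ then lets one guide the construction along such branches. Second, one tree is not enough: the paper builds \emph{two} Sacks trees $p^{\mathrm e}, p^{\mathrm o}$ by an alternating scheme (at each stage one tree branches while the other follows a prepared branch to stay monochromatic on the intervening $Y$-levels, then they swap roles), obtaining $Y \subseteq H_c(p^{\mathrm e}) \cup H_c(p^{\mathrm o}) \in \mathcal I_c$. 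This gives the element of $\mathcal I_c \cap \mathcal I^+$ directly, with no need to locate $A$ inside $\mathcal I^+$.
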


Let us again note that $\conv$ itself is a HL ideal.

\begin{proof}
	Suppose $\conv \nleq_{\mathrm K} \mathcal I$ and let $c \colon 2^{{<}\omega} \to 2$ be any map. 
	We will show that $\mathcal I_c \cap \mathcal I^+ \neq \emptyset$, i.e.\ $\mathcal I$ is HL\@.
	Let $\mathcal X \subset 2^\omega$ be a countable dense set. 
	For $x \in \mathcal X$ and $i \in 2$ let $K_i(x) = \os n \in \omega \mid c(x \restriction n) = i \cs$. 
	Theorem~\ref{thm:conv-char} implies that there is $Y \in \mathcal I^+$ 
	such that for every $x \in \mathcal X$ there is $i(x) \in 2$ such that $Y \subseteq^* K_{i(x)}(x)$.
	For $i \in 2$ let $\mathcal X_i  = \os x \in \mathcal X \mid x(i) = i\cs$, at least one of these sets 
	has to be somewhere dense, assume that $\mathcal X_0$ is dense above $s$. 

	We will recursively construct two Sacks trees $p^{\mathrm e}, p^{\mathrm o}$ with stems extending $s$ 
	and an increasing sequence $\os k(n) \in \omega \mid n \in \omega\cs$.
	Start by choosing $k(0)$ such that there exists $x \in \mathcal X_0$, $s \subset x$, 
	and $Y \subseteq K_{0}(x) \setminus k(0)$, 
	and declare $x \restriction k(0) \in p^{\mathrm e} \cap p^{\mathrm o}$. 

	If $k(n)$, $p^{\mathrm e} \cap 2^{{\leq}k(n)}$ and $p^{\mathrm o} \cap 2^{{\leq}k(n)}$ are already 
	constructed and $n \in \omega$ is even, choose 
	$k(n+1)$, $p^{\mathrm e} \cap 2^{{\leq}k(n+1)}$ and $p^{\mathrm o} \cap 2^{{\leq}k(n+1)}$ such that:
	\begin{enumerate}
		\item\label{cond:branch} Every $t \in p^{\mathrm e} \cap 2^{k(n)}$ has at least two extensions in 
		$p^{\mathrm e} \cap 2^{k(n+1)}$.
		\item\label{cond:prepare} For every $t \in p^{\mathrm e} \cap 2^{k(n+1)}$ there exists $x \in \mathcal X_0$, 
		$t \subset x$ such that $Y \subseteq K_{0}(x) \setminus k(n+1)$.
		\item\label{cond:monochrom} Every $t \in p^{\mathrm o} \cap 2^{k(n)}$ 
		has an extension in $p^{\mathrm o} \cap 2^{k(n+1)}$ and 
		for every $t \in p^{\mathrm o} \cap 2^{k(n+1)}$ and $m \in Y \cap [k(n), k(n+1))$
		is $c(t\restriction m) = 0$.
	\end{enumerate}
	In case $n$ is odd, choose 
	$k(n+1)$, $p^{\mathrm e} \cap 2^{{\leq}k(n+1)}$ and $p^{\mathrm o} \cap 2^{{\leq}k(n+1)}$ in a similar way, 
	just switch the requirements for $p^{\mathrm e}$ and $p^{\mathrm o}$. 

	For even $n$, to choose suitable elements of $p^{\mathrm e} \cap 2^{k(n+1)}$ 
	fulfilling~(\ref{cond:branch}) and~(\ref{cond:prepare})
	it is sufficient to use the density and the definition of $\mathcal X_0$.
	To choose suitable elements of $p^{\mathrm e} \cap 2^{k(n+1)}$ we can use the sets $x$ 
	required in~(\ref{cond:prepare}) of the previous step of the construction.
	If $n$ is odd, the situation is analogous.

	Finally, we have that $Y \subseteq H_c(p^{\mathrm e}) \cup H_c(p^{\mathrm o}) \in \mathcal I_c$ and we are done.
\end{proof}

For some ideals the verification of the HL~property gets more interesting. 

\begin{theorem}
	The ideal $\mathcal{SC}$ is a HL ideal.
\end{theorem}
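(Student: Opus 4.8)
The plan is to verify directly that $\mathcal{SC}$ is HL, using the combinatorial characterization of SC-sets. Recall that $A \in \mathcal{SC}^+$ means: for every $k \in \omega$ there is $\ell \in \omega$ such that infinitely many intervals $I$ of length $\ell$ satisfy $k < \card{I \cap A}$. Equivalently (negating the generating property with a diagonal argument), $A \in \mathcal{SC}^+$ iff for every $k$ there are arbitrarily long intervals containing more than $k$ elements of $A$ — so $A$ has "clusters" of unbounded density at unbounded scales. Given a coloring $c \colon 2^{{<}\omega} \to 2$, I want to produce $p \in \mathbf S$ and $A \in \mathcal{SC}^+$ with $c$ constant on $p \restriction A$. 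The natural strategy, paralleling the $\mathcal{SC}$ clause of the $\conv$ and $\mathcal{ED}$ arguments above, is to try to build the set $A$ as a union of long blocks of consecutive levels, on each of which we can find a perfect subtree that is monochromatic in one fixed color.

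The key steps, in order, are: (1) First observe that for any $s \in 2^{{<}\omega}$ and any $n$, among the $2^{n}$ extensions of $s$ at a sufficiently high level, either many of them get color $1$ or many get color $0$; a pigeonhole argument lets us, at any level, split a node into two extensions of a common color $i$ a bounded number of levels higher. The point is to track, for a growing binary tree of nodes $s(t)$, which single color $i \in 2$ we can guarantee along a whole stretch of consecutive levels. (2) Set up a fusion-style recursion: at stage $n$ we have finitely many nodes $\{s(t) : t \in 2^n\}$ at some level $m_n$; we want to extend to level $m_{n+1}$ so that each $s(t)$ branches into two, and moreover $c$ is constant (with the fixed color $i$) on every newly created node at every level in a long subinterval $[m_n, m_{n+1})$ — or rather on a set of levels in that interval whose size grows with $n$, so that the resulting set $A$ of "used" levels meets arbitrarily long intervals in arbitrarily many points, hence $A \in \mathcal{SC}^+$. (3) The dichotomy: either we can do this for color $1$ infinitely often (an "unbounded-branching with many monochromatic levels" case, handled combinatorially by counting extensions as in the $I(s) \in \mathcal I^+$ case of the previous proposition), or we are forced into a game against $\mathcal{SC}$ — at which point I would invoke the structure of $\mathcal{SC}$ directly rather than Laflamme's game, since $\mathcal{SC}$ is an $F_{\sigma\delta}$ ideal and one can argue about it concretely. (4) Finally, read off the perfect tree $p$ as the downward closure of $\{s(t)\}$ and verify $A = H_c(p) \in \mathcal{SC}^+$ from the density/interval-length bookkeeping set up in step (2).

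The main obstacle I expect is step (2)–(3): ensuring that the set of monochromatic levels $A$ actually lands in $\mathcal{SC}^+$ rather than merely being infinite. Unlike the $\fin$ or P$^+$ cases, where any infinite $A$ suffices, here $A$ must contain clusters of $k$ points in intervals of bounded-but-unbounded-as-$k\to\infty$ length. This means the recursion cannot just add one level per stage; at stage $n$ it must add a block of $\sim n$ consecutive "good" levels, all monochromatic in the same color $i$, while still maintaining the perfect-tree branching. The tension is that guaranteeing monochromaticity across many consecutive levels simultaneously is much more restrictive than across one level — the set of nodes surviving with color $i$ could shrink. I would resolve this by proving a quantitative lemma: for any finite set of nodes at level $m$ and any $r$, there is a level $m' > m$ and extensions of each node that branch into two and whose intermediate levels in $[m, m']$ are, for at least $r$ of them, all colored $i$ for a single $i$ — or else a "bad configuration" certifying that every candidate set hits some short interval, which one pushes into showing $\mathcal{SC} \leq_{\mathrm K} \mathcal I$-type obstruction is impossible because we're working with $\mathcal{SC}$ itself. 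The bookkeeping tying the block lengths $r = r(n)$ to the SC-witnessing function is the delicate part; once it is in place, extracting $p$ and checking $H_c(p) \in \mathcal{SC}^+$ is routine.
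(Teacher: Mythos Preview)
Your plan has a genuine gap at exactly the point you flag as the main obstacle, and the paper's proof takes a route you do not anticipate. Your ``quantitative lemma'' --- given finitely many nodes $\{s(t): t \in 2^n\}$ at level $m$, find extensions that branch and are simultaneously monochromatic in a \emph{single} fixed color on at least $r$ levels of some interval --- is precisely the heart of the matter, and nothing in your outline explains how to secure it. Pigeonhole at a single level gives nothing here: you need one color to persist across many consecutive levels for \emph{all} branches of a perfect subtree, and your fallback (``a game against $\mathcal{SC}$'' or ``a $\mathcal{SC} \leq_{\mathrm K} \mathcal I$-type obstruction'') is too vague to carry weight. Note also that your second reformulation of $\mathcal{SC}^+$ (``arbitrarily long intervals containing more than $k$ elements'') is not equivalent to the first; e.g.\ $\{n^2 : n\in\omega\}$ satisfies it but is an SC-set.

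The paper's argument is structurally different. It first passes (harmlessly, since no reals are added) to a universe with a Ramsey ultrafilter $\mathcal U$. For fixed $\ell$ and any $p \in \mathbf S$, it lets $g \in [p]$ be Sacks-generic, defines $d(m) = (c(g\restriction(m+i)))_{i<\ell} \in 2^\ell$, and uses Sacks-indestructibility of $\mathcal U$ to find $U \in \mathcal U$ and $r \leq p$ forcing $d$ constant on $U$; this yields $[m,m+\ell) \subset H_c(r)$ for all $m \in U$, proving that $\{p : \exists^\infty b \in \mathrm I_\ell,\ \ell \le |b \cap H_c(p)|\}$ is dense. The simultaneity across $k$ incompatible conditions (your real difficulty) is then handled by a separate minimal-counterexample argument: one defines $p$ to be \emph{complete} if for all $k,n$ and all $k$-tuples of incompatible conditions below $p$ there are refinements whose union lies in some $B^\ell_n$, shows by contradiction (using the Ramsey ultrafilter again and a clever accounting of which levels agree or disagree with a reference color pattern) that a complete $p$ exists, and finishes by fusion below it. The Ramsey-ultrafilter device is doing the work that your unspecified quantitative lemma would have to do; without an analogue of it, your plan does not close.
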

\begin{proof}
	Let $c \colon 2^{{<}\omega} \to 2$ be any map, we will find a Sacks tree $q$ 
	such that $H_c(q) \in \mathcal{SC}^+$. 
	For $\ell \in \omega$ let $\mathrm I_\ell$ be the set of all intervals of length $\ell$, 
	$\mathrm I_\ell = \os [m, m+\ell) \mid m \in \omega \cs$.
	For $n, \ell \in \omega$, $n \leq \ell$ let 
	\[B^\ell_n = \os p \in \mathbf S \mid \exists^\infty b\in \mathrm I_\ell, n \leq \card{b \cap H_c(p)} \cs.\]
	Note that these sets are downwards closed in $\mathbf S$. 
	We want to construct $q \in \mathbf S$ such that for each $n \in \omega$ there is 
	$\ell \in \omega$ such that $q \in B^\ell_n$, 
	this will demonstrate that $H_c(q) \in \mathcal{SC}^+$. 

	We first prove that $B^\ell_\ell$ is dense in $\mathbf S$ for each $\ell \in \omega$.
	As in the proof of Proposition~\ref{prop:Gc} we can assume that there exists 
	a Ramsey ultrafilter~$\mathcal{U}$.
	Take any $p \in \mathbf S$ and let $g \in [p]$ be a Sacks generic real over $V$. 
	In $V[g]$ define a map $d\colon \omega \to 2^\ell$ by
	$d(m)(i) = c(g(m+i))$. 
	Since $\mathcal U$ generates an ultrafilter in $V[g]$, 
	there is $U \in \mathcal U$, $z \in 2^\ell$, and $r \in \mathbf S$, $r \leq p$ such that 
	$r \Vdash d(m) = z \text{ for each } m \in U$. 
	Thus necessarily $[m, m+\ell) \subset H_c(r)$ for each $m \in U$, 
	and consequently $r \in B^\ell_\ell$.

	Next we want to get set up for a fusion construction of the desired tree $q$. 
	We will say that a Sacks tree $p$ is \emph{complete} if for every $k, n \in \omega$ 
	the following condition holds:
	\begin{itemize}
		\item[$\star(p,k,n)$] For every set $\os p_j \leq p \mid j \in k\cs$ of conditions in $\mathbf S$ 
		with mutually incompatible stems there exist 
		$\ell \in \omega$ and $\os q_j \leq p_j \mid j \in k\cs$ such that 
		$\bigcup \os q_j \mid j \in k\cs \in B^\ell_n$.
	\end{itemize}
	We will show that there exists a complete Sacks tree.

	If $2^{{<}\omega}$ is not complete, let $k \in \omega$ be minimal for which 
	there is $n \in \omega$ such that $\star(2^{{<}\omega},k,n)$ fails. 
	Let $\os p_j \leq p \mid j \in k\cs$ be the set of conditions witnessing this failure. 
	Note that $k > 1$ since $B_n^n$ is dense. 
	We will prove that $p_0$ has to be complete. 
	Fix $k',n' \in \omega$ and $\os r_j \leq p_0 \mid j \in k'\cs$ 
	for verifying  $\star(p_0,k',n')$. 
	The minimality of $k$ implies $\star(2^{{<}\omega},k-1,n'+nk')$, 
	in particular there exist $\ell \in \omega$ and 
	$\os q_j \leq p_j \mid j \in k \setminus 1 \cs$ such that 
	$q' = \bigcup \os q_j \mid j \in k \setminus 1\cs \in B^\ell_{n'+nk'}$. 
	This means that there exist $b \in {[\ell]}^{n'+nk'}$, $e \colon b \to 2$ 
	and $A \in {[\omega]}^\omega$ such that 
	$c(t) = e(i)$ for every $t \in q' \restriction \os a + i\cs$ such that
	$a \in A$ and $i \in b$.

	Suppose that $g$ is a Sacks generic real. 
	In $V[g]$ define a map $d\colon \omega \to 2^b$ by
	$d(m)(i) = c(g(m+i))$. 
	Assume again that $\mathcal U$ is a Ramsey ultrafilter such that $A \in \mathcal U$. 
	Since $\mathcal U$ generates an ultrafilter in every Sacks extension, 
	we can find $U \in \mathcal U$, $U \subset A$, conditions $r'_j \leq r_j$, and functions
	$z_j \in 2^b$ such that for each $j \in k'$ we have 
	$r'_j \Vdash d(m) = z_j \text{ for each } m \in U$.
	For $j \in k'$ let $x_j = \os i \in b \mid z_j(i) = e(i) \cs$. 
	Since $\os p_j \mid j \in k\cs$ witnesses the failure of $\star(2^{{<}\omega},k,n)$, 
	it has to be the case that $\card{x_j} < n$ for each $j \in k'$, 
	otherwise $r'_j$ would work as $q_0$ together with $\os q_j \mid j \in k \setminus 1 \cs$ 
	for $\star(2^{{<}\omega},k,n)$. 
	Let $y = b \setminus \bigcup \os x_j \mid j \in k' \cs$. 
	We have $\card{y} \geq n'+nk' - nk' = n'$. 
	Note that if $i \in y$, and $j, j' \in k'$ then $z_j(i) \neq e(i) \neq z_{j'}(i)$, 
	i.e.\ $z_j(i) = z_{j'}(i)$. 
	Thus $\ell$ and $\os r'_j \mid j \in k'\cs$ are as required in $\star(p_0,k',n')$.

	We proved that there is a complete Sacks tree $p$. 
	Using a standard fusion construction we can now build $q$ as a perfect subtree of $p$ 
	by infinitely refining $p$ and making sure that for each $n \in \omega$ there 
	exists $\ell$ such that $q \in B^\ell_n$.
\end{proof}

We need a couple of auxiliary results before we can deal with the ideal $\tr({\mathscr N})$.
The following proposition is a characterization of the Sacks property, see~\cite{Miller-measure&category,Bartosz-Judah-reals}.

\begin{proposition}\label{prop:sacks-preserv-measure}
	A forcing $\mathbf P$ has the Sacks property if and only if it strongly preserves outer Lebesgue measure. 
	That is if $V[G]$ is a generic extension via $\mathbf P$ and $O \in V[G]$ is an open subset of $2^\omega$
	such that $\muup(O) < \varepsilon$, then there exists an open set 
	$U \in V$, $\muup(U) < \varepsilon$ such that $O \subseteq U$.
\end{proposition}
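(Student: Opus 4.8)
This is a known characterization (the forward implication being the substantive one), so the plan is to recall the two directions rather than to develop new machinery.

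For the direction \textbf{Sacks property $\Rightarrow$ strong preservation of outer measure}, suppose $\mathbf P$ has the Sacks property, let $\dot O$ be a name, and let $p$ be a condition forcing $\dot O$ to be an open subset of $2^\omega$ with $\muup(\dot O) < \varepsilon$; fix a rational $r$ with $p \Vdash \muup(\dot O) < r < \varepsilon$. The idea is to approximate $\dot O$ from the inside, level by level, by ground model clopen sets, and then to bound the measure of the union of these approximations. Concretely, one performs a fusion-style construction --- the Sacks property is, in combinatorial form, the statement that every $V[G]$-function taking values in a ground model sequence of finite sets is captured by a narrow ($|S(n)| \leq 2^n$, say) ground model slalom $S$ --- producing $q \leq p$ together with, for each $n$, the finitely many possible values of a clopen inner approximation of $\dot O$ up to some height $m_n$; the ground model open set $U$ obtained as the union of all of these then satisfies $q \Vdash \dot O \subseteq \check U$. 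The step I expect to be the main obstacle is precisely the measure estimate: one must interleave the choice of the heights $m_n$, the amount of $\dot O$ revealed at stage $n$, and the coherence of consecutive stages so that the number of candidates at stage $n$ and their measures combine to keep $\muup(U) < \varepsilon$ --- the naive bound blows up, and getting it right is exactly the content of the arguments in~\cite{Miller-measure&category,Bartosz-Judah-reals}, which I would simply cite.

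For the converse, \textbf{strong preservation of outer measure $\Rightarrow$ Sacks property}, it is enough to verify the Sacks property in the form: for every name $\dot x$ with $p \Vdash \dot x \in \prod_n 2^n$ there are $q \leq p$ and $S \in V$ with $|S(n)| \leq 2^n$ and $q \Vdash \dot x(n) \in \check S(n)$ for all $n$ (a general name bounded by a ground model function reduces to this by a routine re-coding). First fix, in $V$, a partition of $\omega$ into consecutive finite intervals $J_n$ with $|J_n| = 2n+2$, and for each $k < 2^n$ let $\langle k\rangle_n \in 2^{J_n}$ be the length-$n$ binary expansion of $k$ padded by $n+2$ zeros; put $C_{n,k} = \set{y \in 2^\omega \mid y \restriction J_n = \langle k\rangle_n}$, a clopen set of measure $2^{-(2n+2)}$. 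In $V[G]$ let $O_{\dot x} = \bigcup_n C_{n,\dot x(n)}$; this is open and $\muup(O_{\dot x}) \leq \sum_n 2^{-(2n+2)} < 1$, so by the hypothesis (applied with $\varepsilon = 1$) there are $q \leq p$ and an open set $U \in V$ with $\muup(U) < 1$ and $q \Vdash O_{\dot x} \subseteq \check U$. Now set $S(n) = \set{k < 2^n \mid C_{n,k} \subseteq U}$, which lies in $V$ and has $|S(n)| \leq 2^n$; since $q \Vdash C_{n,\dot x(n)} \subseteq O_{\dot x} \subseteq \check U$, we obtain $q \Vdash \dot x(n) \in \check S(n)$ for every $n$, as required. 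Here the padding serves only to force $\muup(O_{\dot x}) < 1$, so that the hypothesis applies; no subtlety enters this direction.
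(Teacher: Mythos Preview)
The paper does not prove this proposition at all; it is stated as a known characterization with references to~\cite{Miller-measure&category,Bartosz-Judah-reals}. Your forward direction follows the same path: you sketch the idea and defer the delicate measure estimate to exactly those references, which is appropriate.

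Your converse direction, however, has a genuine gap. The reformulation you aim to verify --- ``for every $\dot x \in \prod_n 2^n$ there is a ground model slalom $S$ with $|S(n)| \leq 2^n$ capturing $\dot x$'' --- is vacuously true: since $\dot x(n)$ already ranges over a set of size $2^n$, the slalom $S(n) = 2^n$ works without any hypothesis on $\mathbf P$. The ``routine re-coding'' you invoke cannot rescue this, because reducing a general bounded name to one in $\prod_n 2^n$ and then allowing $|S(n)| \leq 2^n$ discards all the content of the Sacks property. Correspondingly, your measure argument never actually uses the bound $\muup(U) < 1$ to constrain $|S(n)|$: the sets $C_{n,k}$ for a fixed $n$ are disjoint, but there are only $2^n$ of them and each has measure $2^{-(2n+2)}$, so even $|S(n)| = 2^n$ contributes total measure $2^{-n-2}$, and across different $n$ the blocks $J_n$ are independent, so nothing forces $|S(n)|$ to be small.

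To make this direction work you need the slalom width to be genuinely small relative to the range of $\dot x(n)$ --- e.g.\ capture $\dot x \in \prod_n H(n)$ for \emph{arbitrary} $H \in V$ by a slalom of width $n{+}1$ --- and the coding has to be arranged so that the measure bound on $U$ really limits how many of the $C_{n,k}$ it can swallow at each level. That bookkeeping is exactly the substance of the argument in~\cite{Bartosz-Judah-reals}; it is not a one-line coding trick.
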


\begin{lemma}\label{lem:improve_tree}
	Let $B \subset 2^\omega$ be a Borel set of positive measure. 
	There exists a tree $r$ such that $[r] \subseteq B$ and 
	$\muup(r_s) > 0$ for each $s \in r$.
\end{lemma}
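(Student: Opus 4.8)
The plan is to replace $B$ by a closed set and then prune it from the top down. By inner regularity of the Lebesgue measure, fix a closed set $C \subseteq B$ with $\muup(C) > 0$, and set
\[r = \os s \in 2^{{<}\omega} \mid \muup([s] \cap C) > 0 \cs,\]
where throughout we abbreviate $\muup([r_s])$ by $\muup(r_s)$. First I would check that $r$ is a tree in the sense of this paper, that is, an initial subset of $2^{{<}\omega}$ with no maximal node. Downward closure is immediate: if $t \subseteq s$ then $[t] \supseteq [s]$, hence $\muup([t] \cap C) \geq \muup([s] \cap C)$. The empty sequence belongs to $r$ since $\muup(C) > 0$. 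And $r$ has no maximal node, because $[s] = [s\conc{0}] \cup [s\conc{1}]$ is a disjoint union, so $\muup([s] \cap C) > 0$ forces $\muup([s\conc{i}] \cap C) > 0$ for some $i \in 2$.

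The substance of the argument is in two claims. First, $[r] \subseteq C$: if $x \in [r]$, then each set $[x \restriction n] \cap C$ has positive measure and so is nonempty; these sets form a decreasing sequence of compact subsets of $2^\omega$, hence $\set{x} \cap C = \bigcap_{n} ([x\restriction n] \cap C) \neq \emptyset$, i.e.\ $x \in C$. In particular $[r] \subseteq C \subseteq B$. Second, $\muup(C \setminus [r]) = 0$: a point $x \in C$ lies outside $[r]$ exactly when $x\restriction n \notin r$ for some $n$, so $C \setminus [r] = \bigcup \os [s] \cap C \mid s \in 2^{{<}\omega} \setminus r \cs$ is a countable union of $\muup$-null sets.

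It remains to put the pieces together. For $s \in r$ one has $[r_s] = [r] \cap [s]$, and therefore
\[\muup(r_s) = \muup([r] \cap [s]) = \muup(C \cap [s]) - \muup\bigl((C \setminus [r]) \cap [s]\bigr) = \muup(C \cap [s]) > 0\]
by the second claim and the definition of $r$. So $r$ satisfies $[r] \subseteq B$ and $\muup(r_s) > 0$ for all $s \in r$, as required. I do not anticipate a genuine difficulty here; the only point needing care is that pruning $C$ down to $r$ does not annihilate the measure below any surviving node, which is precisely what the second claim provides.
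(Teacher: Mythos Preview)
Your proof is correct and follows essentially the same approach as the paper. The paper phrases the construction by first defining the closed set $D = C \setminus \bigcup\set{[s] \mid \muup([s]\cap C) = 0}$ and then taking $r$ to be the tree of $D$; your $r$ is exactly this tree, and your second claim $\muup(C \setminus [r]) = 0$ is the content of the paper's observation that $D$ and $C$ differ by a null set.
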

\begin{proof}
	Let $C \subseteq B$ be a closed such that $\muup(C) > 0$.
	Define \[X = \os s \in 2^{{<}\omega} \mid \muup([s] \cap C) = 0 \cs\]
	and let $D = C \setminus \bigcup \os [s] \mid s \in X \cs$. 
	$D$ is a closed set and for every $t \in 2^{{<}\omega}$ either 
	$D \cap [t] = \emptyset$ or $\muup(D \cap [t] )> 0$.
	The tree $r = \os x \restriction n \mid x \in D, n \in \omega \cs$ is as desired.
\end{proof}

\begin{theorem}
	The ideal $\tr({\mathscr N})$ is a HL ideal.
\end{theorem}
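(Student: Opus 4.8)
The plan is to invoke Theorem~\ref{thm:HL-char}: by the equivalence of~(\ref{thm:HL:cond-HL}) and~(\ref{thm:HL:cond-Sacksind}) it suffices to check that $\tr(\mathscr N)^+ \cap V$ remains a reaping family in $V[g]$ whenever $g$ is Sacks generic over $V$, and by the usual density argument this reduces to: for every $p \in \mathbf S$ and every $\mathbf S$-name $\dot X$ for a subset of $2^{{<}\omega}$ there are $q \leq p$ and $A \in \tr(\mathscr N)^+ \cap V$ with $q \Vdash A \subseteq \dot X$ or $q \Vdash A \cap \dot X = \emptyset$. Since $\piup(\dot X) \cup \piup(2^{{<}\omega} \setminus \dot X) = 2^\omega$ is forced, either some $p' \leq p$ forces $\muup(\piup(\dot X)) > 0$, or $p$ forces $\muup(\piup(\dot X)) = 0$ and hence $\muup(\piup(2^{{<}\omega} \setminus \dot X)) = 1$; replacing $\dot X$ by its complement in the second case, I may assume $p \Vdash \muup(\piup(\dot X)) > 0$ and aim for $A$ with $q \Vdash A \subseteq \dot X$. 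From now on I work in $W = V[g]$ with $g \in [p]$ generic, writing $X = \dot X[g]$, so $\muup(\piup(X)) > 0$.

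First I would transfer a positive measure portion of $\piup(X)$ back to $V$. The set $2^\omega \setminus \piup(X)$ is a countable union of closed sets and has measure $< 1$, hence by outer regularity it is covered in $W$ by an open set of measure $< 1$; by Proposition~\ref{prop:sacks-preserv-measure} (Sacks forcing has the Sacks property and therefore strongly preserves outer measure) there is an open $U \in V$ of measure $< 1$ covering it, so $C = 2^\omega \setminus U \in V$ is a closed set of positive measure contained in $\piup(X)$. Applying Lemma~\ref{lem:improve_tree} to $C$ produces a tree $r \in V$ with $[r] \subseteq \piup(X)$ and $\muup(r_s) > 0$ for every $s \in r$; thus $[r]$ is perfect of positive measure, every branch of $r$ has infinitely many initial segments in $X$, and consequently $\overline\varphi(r \cap X) \geq \muup([r]) > 0$.

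The core of the proof is to replace $r \cap X \in W$ by a genuinely ground model set: I would build, by a fusion, a perfect $q \leq p$ together with $A \in V$ such that $\overline\varphi(A) > 0$ and $q \Vdash A \subseteq \dot X$. The everywhere-positivity of $r$ is exactly what lets the fusion proceed without losing mass: at stage $k$, working below each of the finitely many current subtrees, one uses the compactness of the relevant piece of $[r]$ together with $[r] \subseteq \piup(X)$ to pin down a finite antichain of $r$-nodes that lie in $X$ and cover a fixed fraction of that piece, and the Sacks property (in slalom form) to read these antichains off inside ground model windows; the fused condition $q$ forces the union $A$ of all these antichains into $\dot X$, while the fixed-fraction requirement yields $\overline\varphi(A) > 0$ via the inequality $\muup(\limsup_k E_k) \geq \limsup_k \muup(E_k)$ applied to the successive disjoint-window pieces. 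An essentially equivalent route is to first read $\dot X$ continuously off the generic on a perfect subtree $q_0 \leq p$, set $D_s = \{\, y \in [q_0] : s \in F(y) \,\}$ (clopen), observe by a Fubini computation that $\{\, s : \muup(D_s) \geq \varepsilon \,\}$ is $\tr(\mathscr N)$-positive for a suitable $\varepsilon > 0$, and then run the fusion inside $q_0$ so as to force $[q] \subseteq D_s$ for $s$ ranging over a $\tr(\mathscr N)$-positive subset of this set.

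The hard part is precisely this fusion. The naive candidate for $A$ is the set of nodes forced into $\dot X$ by $p$ itself (equivalently $\bigcap_{y \in [p]} F(y)$), but this set can be $\tr(\mathscr N)$-small for every cone $p_s$; the gain comes from shrinking the condition at every level rather than merely restricting to a cone, which can push the "forced part" of $\dot X$ up to a $\tr(\mathscr N)$-positive set. Making this precise — choosing the antichains at each stage so that every clopen set $D_s$ involved almost contains all of the current subtrees, which is where Lebesgue density points and the everywhere-positive tree supplied by Lemma~\ref{lem:improve_tree} enter, and verifying that the fusion converges to a perfect tree — is the delicate bookkeeping. Once $q$ and $A$ are obtained, $A \in \tr(\mathscr N)^+ \cap V$ reaps $X$ below $q$, and an appeal to Theorem~\ref{thm:HL-char} completes the proof.
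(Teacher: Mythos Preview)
Your overall strategy—use Proposition~\ref{prop:sacks-preserv-measure} and Lemma~\ref{lem:improve_tree} to locate a ground-model tree, then run a fusion—matches the paper's. The gap is in the case split and, consequently, in the fusion.

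Your dichotomy is only ``$\muup(\piup(\dot X)) > 0$ somewhere vs.\ $=0$ everywhere'', and after the reduction you commit to producing $A$ with $q \Vdash A \subseteq \dot X$. The inductive step of the fusion (passing from $n$ conditions to $n+1$) does not go through with this hypothesis alone. Concretely: having found antichains $b_k \subseteq r$ with $\varphi(b_k)$ close to $\muup([r])$, each forced into $\dot X$ by a descending chain below the first condition, you set $b^* = \bigcup_k b_k$ and now need a large antichain inside $b^* \cap \dot X$ below the next condition~$p'$. But from $p' \Vdash [r] \subseteq \piup(\dot X)$ you only get that branches of $r$ meet $\dot X$ infinitely often and (for a.e.\ branch) meet $b^*$ infinitely often; nothing forces these occurrences to coincide, so $\muup(\piup(b^* \cap \dot X))$ may well be $0$ and the step fails. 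Your ``slalom'' and ``Fubini/$D_s$'' variants share the same defect: neither produces a \emph{single} ground-model antichain that \emph{all} of the finitely many current conditions simultaneously force into $\dot X$.

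The paper's fix is a sharper dichotomy. Case~1 is: there exist $q \leq p$ and $b \subseteq 2^{{<}\omega}$ with $q \Vdash \muup(\piup(\dot a \cap b)) < \muup(\piup(b))$; here Proposition~\ref{prop:sacks-preserv-measure} and Lemma~\ref{lem:improve_tree} yield a positive set almost disjoint from~$\dot a$. Case~2 is the negation, i.e.\ $p \Vdash \muup(\piup(\dot a \cap b)) = \muup(\piup(b))$ for \emph{every} $b$. This universal quantifier is exactly what drives the induction on~$n$: once $b^*$ is built from the first $n$ conditions, applying the Case~2 hypothesis with $b = b^*$ gives $p_n \Vdash \muup(\piup(\dot a \cap b^*)) = \muup(\piup(b^*)) = 1$, so a large antichain $c \subseteq b^*$ can be forced into $\dot a$ below $p_n$ as well. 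Your weaker hypothesis does not supply this, and in fact under ``$\muup(\piup(\dot X))>0$'' alone the correct reaping set may have to lie on the disjoint side rather than inside~$\dot X$.
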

\begin{proof}
	Let $\dot a$ be an $\mathbf S$-name for a subset of $2^{{<}\omega}$. 
	We will prove that $\dot a$ is forced to be reaped by an element of ${\tr({\mathscr N})}^+ \cap V$. 
	Let $p \in \mathbf S$ be any condition. 
	
	\begin{case}
		There is $q \in \mathbf S$, $q \leq p$ and $b \subseteq 2^{{<}\omega}$
		such that $q \Vdash \muup(\piup(\dot a \cap b)) < \muup(\piup(b))$.
	\end{case} 
	The condition $q$ forces that there is an open set $O$ such that
	$\piup(\dot a \cap b) \subseteq \dot O$ and $\muup(\dot O) < \muup(\piup(b))$. 
	Proposition~\ref{prop:sacks-preserv-measure} gives us an open set $U \in V$, 
	$\muup(U) < \muup(\piup(b))$ such that 
	there is a condition $q_0 \leq q$ such that $q_0 \Vdash \dot O \subseteq U$. 
	Using Lemma~\ref{lem:improve_tree} we can find $r$ such that $[r] \subset \piup(b) \setminus U$ 
	and $\muup(r_s) > 0$ for each $s \in r$.
	Find $q_1 < q_0$ and $s \in r$ such that $q_1 \Vdash r_s \cap (\dot a \cap b) \text{ is finite}$. 
	We have that $r_s \cap b \in {\tr(\mathscr N)}^+$ and $q_1$ 
	forces this set to be almost disjoint with $\dot a$.
	
	\begin{case}
		$p \Vdash \muup(\piup(\dot a \cap b)) = \muup(\piup(b))$ for every $b \subseteq 2^{{<}\omega}$.
	\end{case}
	In particular, $p$ forces that $\piup(\dot a)$ has full measure.
	\begin{claim*}
		For every $m,n \in \omega$ and $\os p_i \leq p \mid i \in n \cs$ 
		there exist $\os q_i \leq p_i \mid i \in n \cs$ and a finite antichain 
		$b \in 2^{{<}\omega} \setminus 2^{{<}m}$ such that 
		$1 - \frac{1}{m} < \sum \os 2^{-\card{s}} \mid s \in b\cs$ and 
		$q_i \Vdash b \subseteq \dot a$ for every $i \in n$.
	\end{claim*}
	We prove the claim by induction on $n \in \omega$. 
	For $n = 1$ the claim holds since $\piup(\dot a)$ is forced to have full measure. 
	Suppose the claim holds for $n$, we will prove it for $n+1$. 
	Fix $m \in \omega$ and $\os p_i \leq p \mid i \in n+1 \cs$. 
	We can recursively find descending sequences $\os p_i^k \mid k \in \omega \cs$ 
	for $i \in n$ and finite antichains 
	$\os b_k \mid k \in \omega \cs$ 
	such that
	\begin{enumerate}
		\item $p_i^0 = p_i$ for $i \in n$,
		\item $b_k \subseteq 2^{{<}\omega} \setminus 2^{{<}k}$ for $k \in \omega$,
		\item $1 - \frac{1}{k} < \sum \os 2^{-\card{s}} \mid s \in b_k\cs$ for $k \in \omega$, and
		\item $p_i^k \Vdash b_k \subset \dot a$ for $i \in n$.
	\end{enumerate}
	Let $b = \bigcup \os b_k \mid k \in \omega\cs$ and note 
	that $\muup(\piup(b)) = 1$, thus
	$p_n \Vdash \muup(\piup(\dot a \cap b)) = 1$. 
	There is $q_n \leq p_n$ and a finite antichain $c \subset b \setminus 2^{{<}m}$ such that 
	$q_n \Vdash c \subset \dot a$ and $1 - \frac{1}{m} < \sum \os 2^{-\card{s}} \mid s \in c\cs$. 
	Since $c$ is finite, there is $k \in \omega$ such that $p^k_i \Vdash c \subseteq \dot a$ for each
	$i \in n$ and the claim is proved.
	\claimdone

	Using the claim we can run a standard fusion construction; 
	construct $q \leq p$, $q \in \mathbf S$ and finite antichains $\os b_k \mid k \in \omega \cs$
	such that
	\begin{enumerate}
		\item $b_k \subseteq 2^{{<}\omega} \setminus 2^{{<}k}$ for $k \in \omega$,
		\item $1 - \frac{1}{k} < \sum \os 2^{-\card{s}} \mid s \in b_k\cs$ for $k \in \omega$, and
		\item $q \Vdash b_k \subseteq \dot a$ for each $k \in \omega$.
	\end{enumerate}
	Finally $b = \bigcup \os b_k \mid k \in \omega\cs \in {\tr(\mathscr N)}^+$ and 
	$q \Vdash b \subseteq \dot a$.
\end{proof}

We know only one notable example of an ideal without the HL property.

\begin{theorem}\label{thm:Z-isHL}
	The ideal $\mathcal Z$ of sets of asymptotic density $0$ is not HL\@.
\end{theorem}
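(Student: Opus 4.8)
The plan is to apply Observation~\ref{obs:Ic}: it suffices to construct a single colouring $c\colon 2^{{<}\omega}\to 2$ with $\mathcal I_c\subseteq\mathcal Z$, i.e.\ with $H_c(p)\in\mathcal Z$ for every perfect tree $p$. The first step is a reduction to the ``fat tail'' of $p$. Writing $m_n(p)=|p\cap 2^n|$, every perfect tree satisfies $m_n(p)\to\infty$, and $n\mapsto m_n(p)$ is non-decreasing. Hence it is enough to produce, for some sequence $\delta_M\to 0$, a colouring $c$ with the property $(\ast)$: for every $M$ and every perfect $p$, if $n_0$ is least with $m_{n_0}(p)\ge M$ then the upper density of $H_c(p)\setminus n_0$ is at most $\delta_M$. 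Indeed, granting $(\ast)$, fix a perfect $p$ and $\varepsilon>0$, pick $M$ with $\delta_M<\varepsilon$ and then $n_0$ with $m_{n_0}(p)\ge M$; since $H_c(p)\cap n_0$ is finite and $H_c(p)\setminus n_0$ has upper density $<\varepsilon$, we get that $H_c(p)$ has upper density $<\varepsilon$, and as $\varepsilon$ was arbitrary, $H_c(p)\in\mathcal Z$.

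Next one would build $c$ by recursion along a fast-growing sequence $n_0<n_1<\dots$ with bounded ratios $n_{k+1}\le Cn_k$, so that asymptotic density is controlled block by block on the blocks $B_k=[n_k,n_{k+1})$. Having defined $c$ on $2^{{<}n_k}$, one extends it to $2^{{<}n_{k+1}}$ so that the colouring of $B_k$ is ``hard to track'': for every $M\ge 2$ (with $M\le k$, say) and every forest $F$ whose roots are $M$ distinct nodes of level $n_k$ and in which every node has a successor, at most $\delta_M|B_k|$ of the levels $n\in B_k$ are $c$-monochromatic on the set of level-$n$ nodes of $F$; forests with more than $M$ roots are at least as good, the worst case being that of $M$ pairwise disjoint branches navigating $B_k$ arbitrarily. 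Since $p\restriction B_k$ is exactly such a forest (with $\ge m_{n_k}(p)$ roots) for every perfect $p$, summing over the blocks $B_k\subseteq[n_0,\omega)$ yields $(\ast)$, and the verification is then a routine density estimate.

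The heart of the matter is producing, at stage $k$, a block colouring that is hard to track for all $M$ simultaneously. One cannot let the colour of a node of $B_k$ depend only on its ancestor at level $n_k$: by the Plotkin bound no family of $2^{n_k}$ binary words is pairwise far, so some two roots would carry almost-identical colour sequences, and since $M$ may be as small as $2$ this alone would make a whole block monochromatic. So the block colouring must use all the coordinates of a node inside $B_k$, and it must be designed --- through a delicate recursive construction of mutually ``repelling'' sub-patterns on geometrically decreasing scales, arranged so that a perfect tree staying inside one pattern is nonetheless split among sub-patterns whose colourings conflict --- so that any $M\ge 2$ freely steered branches are forced to disagree on all but a $\delta_M$-fraction of the levels of $B_k$. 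Making this work for every $M$ at once, robustly against the adversary's complete freedom in choosing $p$ and steering its branches (in particular against the fact that $m_n(p)$ may grow arbitrarily slowly), is the step I expect to be the main obstacle; by Theorem~\ref{thm:HL-char} it could equivalently be phrased as exhibiting a single Sacks-generic real and a set it computes that is reaped by no ground-model density-positive set, but that reformulation carries the same difficulty.
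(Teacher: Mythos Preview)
Your proposal does not actually contain a proof: you correctly isolate the combinatorial core --- producing a block colouring that is ``hard to track'' for every $M$ simultaneously --- but then explicitly label it ``the main obstacle'' and leave it unresolved, offering only a vague sketch of ``mutually repelling sub-patterns on geometrically decreasing scales''. The difficulty you describe is genuine if one insists on colouring the full binary tree, since at level $n_k$ there are $2^{n_k}$ possible roots and, as you yourself note via the Plotkin bound, one cannot assign them pairwise-far colour words of length $|B_k|$.

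The paper sidesteps this obstacle entirely by using the \emph{localized} form of Observation~\ref{obs:Ic} (recorded in the paragraph immediately following it): it suffices to exhibit $c$ together with a \emph{single} condition $p_0\in\mathbf S$ such that $H_c(q)\in\mathcal Z$ for every $q\le p_0$. One takes $p_0$ to be a slowly branching tree with exactly $n$ branches throughout the block $(2^n,2^{n+1}]$, and defines $c$ on $p_0$ so that the map $i\mapsto\bigl(j\mapsto c(s^n_j\restriction i)\bigr)$ is a bijection from $(2^n,2^{n+1}]$ onto $2^n$ --- that is, each of the $2^n$ possible colour patterns on the $n$ branches occurs on exactly one level of the block. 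Then if $q\le p_0$ has $k$ branches at level $2^{n+1}$, exactly $2\cdot 2^{n-k}$ of the $2^n$ levels in the block are $c$-monochromatic on $q$, so $|H_c(q)\cap(2^n,2^{n+1}]|/2^n=2^{1-k}\to 0$ because $q$ is perfect. Your ``hard to track for all $M$'' requirement becomes a triviality once the number of nodes per level is fixed in advance to match the block length, rather than being $2^n$; restricting to a thin $p_0$ is precisely the idea you are missing.
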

A related result was proved by Stepr\={a}ns~\cite{Steprans-density}.
\begin{proof}
	Choose a slowly branching tree $p \in \mathbf S$,
	in particular $\card{ p \cap 2^{n+1}} = n$
	 for $n \in \omega \setminus 1$.
	I.e.\ at levels in the interval $(2^n, 2^{n+1}]$ the tree $p$
	has exactly $n$ branches. 
	Given $n \in \omega \setminus 1$ enumerate $p \cap 2^{2^{n+1}}$ as $\os s_j^n \mid j \in n\cs$. 
	Choose a function $c \colon p \to 2$ such that for each $n \in \omega \setminus 1$
	the function $b^n \colon (2^n, 2^{n+1}] \to 2^n$ defined by  
	$b^n (i) = (j \mapsto c(s_j^n \restriction i))$ is a bijection.
	(When we track what the function $c$ does on levels in the interval $(2^n, 2^{n+1}]$, 
	we find each combination of assigning $0$ and $1$ to the $n$ branches on exactly one level.)

	Notice that for $q \leq p$, $q \in \mathbf S$ and $n \in \omega\setminus 1$, 
	if $\card{q \cap {2^{n+1}}} = k$, then 
	$\card{H_c(q) \cap (2^n, 2^{n+1}]} = 2^{n-k+1}$. 
	Since $q$ is perfect, the number of branches $\card{q \cap {2^{n+1}}}$ 
	diverges to infinity with increasing $n$, and consequently $H_c(q) \in \mathcal Z$. 
	We have $\mathcal I_c(p) \subseteq \mathcal Z$.
\end{proof}

\section{Halpern--Läuchli ultrafilters}

Our main interest is the existence of Sacks indestructible ultrafilters. 
The typical approach to constructing these ultrafilters is to attempt a recursive construction, 
starting with a filter (or a dual ideal) and enlarging it to construct an ultrafilter while 
trying to get the control over the~HL property. 
In fact, even our motivation for looking into HL ideals was this approach. 

The following is an immediate corollary of Theorem~\ref{thm:Z-isHL}.
\begin{corollary}
	Every Halpern--Läuchli ultrafilter is a $\mathcal Z$-ultrafilter.
\end{corollary}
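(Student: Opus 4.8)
The plan is to argue by contradiction, exploiting that the HL property is preserved downwards in the Katětov order. Suppose $\mathcal U$ is a Halpern--Läuchli ultrafilter but not a $\mathcal Z$-ultrafilter. By Baumgartner's characterization recalled in the introduction, failing to be a $\mathcal Z$-ultrafilter means $\mathcal Z \leq_{\mathrm K} \mathcal U^*$. Since $\mathcal U$ is an ultrafilter, its dual ideal satisfies $(\mathcal U^*)^+ = \mathcal U$, so the hypothesis that $\mathcal U$ is a Halpern--Läuchli family is precisely the statement that $\mathcal U^*$ is a HL ideal. Applying Lemma~\ref{lem:katetov-HLideals} to the relation $\mathcal Z \leq_{\mathrm K} \mathcal U^*$ then forces $\mathcal Z$ to be a HL ideal, which directly contradicts Theorem~\ref{thm:Z-isHL}. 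Hence $\mathcal U$ must be a $\mathcal Z$-ultrafilter.

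The only point that needs a moment's care is the translation between the two phrasings of ``HL'': a family $\mathcal R \subseteq [\omega]^\omega$ being a Halpern--Läuchli family, versus an ideal $\mathcal I$ being a HL ideal (which by our convention means that $\mathcal I^+$ is a Halpern--Läuchli family). For an ultrafilter these coincide through the identity $\mathcal U = (\mathcal U^*)^+$, so no genuine obstacle arises; the corollary is indeed immediate from Theorem~\ref{thm:Z-isHL} together with Lemma~\ref{lem:katetov-HLideals}.
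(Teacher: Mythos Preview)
Your proof is correct and follows essentially the same route as the paper: both argue via the contrapositive using $\mathcal Z \leq_{\mathrm K} \mathcal U^*$, Lemma~\ref{lem:katetov-HLideals}, and Theorem~\ref{thm:Z-isHL}. The paper's version is simply the one-line contrapositive (``$\mathcal Z$ not HL implies $\mathcal U^*$ not HL''), while you unpack the same implication as a contradiction; your added remark on the $(\mathcal U^*)^+ = \mathcal U$ identification is a helpful clarification.
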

\begin{proof}
	If an ultrafilter $\mathcal U$ is not a $\mathcal Z$-ultrafilter, then $\mathcal Z \leq_{\mathrm K} \mathcal U^*$. 
	As $\mathcal Z$ is not HL, neither is $\mathcal U^*$.
\end{proof}

The existence of $\mathcal Z$-ultrafilters is an open question. 
\begin{question}[Hrušák~\cite{hrusak-combinatorics}]
	Do $\mathcal Z$-ultrafilters exist in $\mathsf{ZFC}$?
\end{question}
A related result was proved by Gryzlov~\cite{Gryzlov} who showed that in $\mathsf{ZFC}$ 
there is an ultrafilter $\mathcal U$ such that for every injective function $f \colon \omega \to \omega$ 
there is $U \in \mathcal U$ such that $f[U] \in \mathcal Z$. 
This result was improved by Flašková~\cite{Flaskova-0-point} who proved that 
the same holds true for the summable ideal $\mathcal I_{\nicefrac{1}{n}}$ 
in place of $\mathcal Z$.

The situation is most favorable for constructing a given type of ultrafilter 
when any given filter generated by ${<} \mathfrak c$ many sets can be extended to an ultrafilter of this type. 
This phenomenon was studied by several authors, explicit treatment is in~\cite{Bendle-Flaskova}.

\begin{definition}[Brendle--Flašková~\cite{Bendle-Flaskova}]
	A class $\mathscr C$ of ultrafilters exists generically if every filter base of
    size ${<} \mathfrak c$ can be extended to an ultrafilter in~$\mathscr C$.
\end{definition}

Let us define a cardinal characterizing the generic existence of Halpern--Läuchli ultrafilters.

\begin{definition}
	\[\mathfrak{hl} = \min\os \cof(\mathcal I) \mid \mathcal I \text{ is a non-HL ideal}\cs\]
\end{definition}

Ketonen proved that P-ultrafilters exist if the dominating number $\mathfrak d = \mathfrak c$. 
In fact, since every ideal generated by~${<}\mathfrak d$ sets is a P$^+$-ideal,
Proposition~\ref{prop:P+} has the following corollary.

\begin{corollary}
	$\mathfrak d \leq \mathfrak{hl}$
\end{corollary}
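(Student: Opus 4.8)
The goal is to prove $\mathfrak d \leq \mathfrak{hl}$, i.e.\ that every ideal generated by strictly fewer than $\mathfrak d$ sets is HL\@. The plan is to reduce this to Proposition~\ref{prop:P+} by showing that any ideal $\mathcal I$ with $\cof(\mathcal I) < \mathfrak d$ is automatically a P$^+$-ideal; then Proposition~\ref{prop:P+} immediately gives that $\mathcal I$ is HL, so every non-HL ideal must have $\cof(\mathcal I) \geq \mathfrak d$, which is exactly the claimed inequality $\mathfrak d \leq \mathfrak{hl}$.

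So the heart of the matter is the (presumably folklore) fact that every ideal with a generating family of size ${<}\mathfrak d$ is P$^+$. First I would fix a cofinal family $\os B_\alpha \mid \alpha < \kappa\cs \subseteq \mathcal I$ with $\kappa < \mathfrak d$, and take an arbitrary sequence $\os X_n \in \mathcal I^+ \mid n \in \omega\cs$; the task is to produce finite sets $y_n \in {[X_n]}^{<\omega}$ with $\bigcup_n y_n \in \mathcal I^+$. For each $\alpha$, since $X_n \notin \mathcal I$ we have $X_n \setminus B_\alpha$ infinite, so I can define $f_\alpha \in \omega^\omega$ by letting $f_\alpha(n)$ be large enough that $X_n \setminus B_\alpha$ meets the interval $[0, f_\alpha(n))$ in at least one point past everything already committed — more carefully, $f_\alpha(n)$ witnesses that $X_n \cap [0,f_\alpha(n)) \setminus B_\alpha \neq \emptyset$. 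Because $\kappa < \mathfrak d$, the family $\os f_\alpha \mid \alpha < \kappa\cs$ is not dominating, so there is $g \in \omega^\omega$ not dominated by any $f_\alpha$: for every $\alpha$ there are infinitely many $n$ with $f_\alpha(n) \leq g(n)$.

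Then I would set $y_n = X_n \cap [0, g(n))$ (a finite subset of $X_n$) and $Y = \bigcup_n y_n$. To check $Y \in \mathcal I^+$, it suffices to check $Y \not\subseteq B_\alpha$ for every $\alpha$ (since the $B_\alpha$ are cofinal in $\mathcal I$, any set in $\mathcal I$ is contained in some $B_\alpha$). Fix $\alpha$; choose $n$ with $f_\alpha(n) \leq g(n)$. By the choice of $f_\alpha$, there is a point of $X_n$ below $f_\alpha(n) \leq g(n)$ that is not in $B_\alpha$; this point lies in $y_n \subseteq Y$ and witnesses $Y \not\subseteq B_\alpha$. Hence $Y \in \mathcal I^+$, so $\mathcal I$ is P$^+$.

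The only subtlety — and the step I would be most careful about — is the definition of the functions $f_\alpha$: one must make sure that "$X_n \cap [0,f_\alpha(n))$ contains a point outside $B_\alpha$" is the clause that survives, and that this is possible for \emph{every} $n$ simultaneously, which it is precisely because each $X_n \setminus B_\alpha$ is infinite, hence nonempty, so some finite initial segment already captures a witness. Everything else is bookkeeping. Once P$^+$ is established, Proposition~\ref{prop:P+} finishes the argument, and the corollary follows.
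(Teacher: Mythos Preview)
Your proposal is correct and follows exactly the paper's route: the paper simply states (as the sentence preceding the corollary) that every ideal generated by ${<}\mathfrak d$ sets is a P$^+$-ideal and invokes Proposition~\ref{prop:P+}. You supply the (folklore) details of that fact, and your argument via undominated $g$ and $y_n = X_n \cap g(n)$ is the standard one and is sound.
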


\begin{question}
	Is $\mathfrak d = \mathfrak{hl}$ a theorem of $\mathsf{ZFC}$?
\end{question}

\begin{question}
	Is $\cov(\mathscr N) \leq \mathfrak{hl}$ a theorem of $\mathsf{ZFC}$?
\end{question}

\begin{proposition}
	Halpern--Läuchli ultrafilters exist generically if and only if $\mathfrak{hl} = \mathfrak{c}$.
\end{proposition}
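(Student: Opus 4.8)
The plan is to prove the two implications separately, the engine in both directions being the definition of $\mathfrak{hl}$ together with Observation~\ref{obs:Ic}. The elementary remark underlying everything is this: if a filter $\mathcal F$ on $\omega$ is generated by fewer than $\mathfrak c$ sets, then its dual ideal $\mathcal F^*$ has cofinality $<\mathfrak c$ — a cofinal subfamily is obtained by taking finite unions of the complements of a generating base. Hence, if in addition $\mathfrak{hl}=\mathfrak c$, then $\mathcal F^*$ cannot be a non-HL ideal, i.e.\ the coideal $\mathcal F^+$ is a Halpern--Läuchli family. Recall also that $A\in\mathcal F^+$ holds exactly when $A$ is compatible with $\mathcal F$, that is, when $\mathcal F\cup\{A\}$ generates a proper filter.

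Assume $\mathfrak{hl}=\mathfrak c$ and let $\mathcal F_0$ be a filter base with $\card{\mathcal F_0}<\mathfrak c$; we may assume it generates a free filter, since a Halpern--Läuchli ultrafilter, being a subfamily of ${[\omega]}^\omega$, is free. I would build an increasing chain $\langle \mathcal F_\alpha \mid \alpha<\mathfrak c\rangle$ of free filters, each with a base of size $<\mathfrak c$, and along the way enumerate all subsets of $\omega$ and all colorings $c\colon 2^{{<}\omega}\to 2$ in $\mathfrak c$ steps. At a step handling a set $X$, add $X$ to the current filter if it is compatible, and otherwise note that $\omega\setminus X$ is already in the filter. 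At a step handling a coloring $c$, apply the remark to $\mathcal F_\alpha$: since $\mathcal F_\alpha^+$ is a Halpern--Läuchli family there are $p\in\mathbf S$ and $A\in\mathcal F_\alpha^+$ with $c$ constant on $p\restriction A$; this $A$ is infinite (as $\mathcal F_\alpha$ is free) and compatible with $\mathcal F_\alpha$, so put $\mathcal F_{\alpha+1}=\langle \mathcal F_\alpha\cup\{A\}\rangle$. Take unions at limits; since each successor step adjoins at most one generator, every $\mathcal F_\alpha$ with $\alpha<\mathfrak c$ keeps a base of size $<\mathfrak c$. Then $\mathcal U=\bigcup_{\alpha<\mathfrak c}\mathcal F_\alpha$ is an ultrafilter extending $\mathcal F_0$, and it is a Halpern--Läuchli family by construction, since for each coloring $c$ we placed a witnessing pair $p\in\mathbf S$, $A\in\mathcal U$ into the build. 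Hence Halpern--Läuchli ultrafilters exist generically.

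For the converse, suppose $\mathfrak{hl}<\mathfrak c$ and fix a non-HL ideal $\mathcal I$ with $\cof(\mathcal I)=\mathfrak{hl}$. Choose a cofinal $\mathcal C\subseteq\mathcal I$ of size $\mathfrak{hl}<\mathfrak c$; then $\set{\omega\setminus C\mid C\in\mathcal C}$ is a filter base of size $<\mathfrak c$ generating the free filter $\mathcal I^*$. If $\mathcal U\supseteq\mathcal I^*$ is any ultrafilter, then $\mathcal U^*\supseteq\mathcal I$, and by Observation~\ref{obs:Ic} there is $c\colon 2^{{<}\omega}\to 2$ with $\mathcal I_c\subseteq\mathcal I\subseteq\mathcal U^*$; thus $\mathcal U^*$ is non-HL and $\mathcal U$ is not a Halpern--Läuchli ultrafilter. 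So this particular filter base of size $<\mathfrak c$ admits no Halpern--Läuchli ultrafilter extension, and Halpern--Läuchli ultrafilters do not exist generically.

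The routine parts are the transfinite bookkeeping in the second paragraph: maintaining $<\mathfrak c$ generators through limit stages and arranging a surjective enumeration of sets and colorings. The one step that deserves real attention, and which I regard as the crux, is recognizing that the set $A$ delivered by the Halpern--Läuchli family $\mathcal F_\alpha^+$ is automatically an infinite set compatible with $\mathcal F_\alpha$, so that adjoining it both preserves the inductive hypotheses and permanently records the witness needed for the coloring~$c$.
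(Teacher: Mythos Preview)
Your proof is correct and follows essentially the same approach as the paper. The paper's argument is terse—it sketches the recursion in one sentence and declares the converse ``immediate''—while you spell out the bookkeeping and make the converse explicit via Observation~\ref{obs:Ic}; but the underlying idea (small cofinality $\Rightarrow$ HL, then recurse through all colorings) is identical.
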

\begin{proof}
	Suppose that $\mathfrak{hl} = \mathfrak{c}$ and $\mathcal I$ is an ideal generated by ${<} \mathfrak c$ 
	many sets. Then $\mathcal I$ is HL, and given any $c \colon 2^{{<}\omega} \to 2$ there is 
	$U \in \mathcal I^+$ and $p \in \mathbf S$ such that $p \restriction U$ is $c$-monochromatic. 
	Using this observation, we can extend any small filter in $\mathfrak c$ many steps into an 
	ultrafilter while making sure that the Halpern--Läuchli condition is fulfilled for every $c$.
	The other implication is immediate.
\end{proof}

\begin{corollary}~\label{cor:HLexist}
	If $\mathfrak u \leq \mathfrak{hl}$, then there is a Halpern--Läuchli ultrafilter.
\end{corollary}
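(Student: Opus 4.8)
The plan is to distinguish whether $\mathfrak u<\mathfrak{hl}$ or $\mathfrak u=\mathfrak{hl}$.

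If $\mathfrak u<\mathfrak{hl}$, fix an ultrafilter $\mathcal U$ of minimal character, so that $\chi(\mathcal U)=\mathfrak u$. Then $\cof(\mathcal U^*)=\chi(\mathcal U)=\mathfrak u<\mathfrak{hl}$, so the dual ideal $\mathcal U^*$ has cofinality strictly below the least cofinality of a non-HL ideal; by the very definition of $\mathfrak{hl}$ this forces $\mathcal U^*$ to be HL, i.e.\ $\mathcal U$ is a Halpern--Läuchli ultrafilter. This case needs nothing beyond the cofinality characterization of $\mathfrak{hl}$.

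The substantive case is $\mathfrak u=\mathfrak{hl}=:\kappa$, where an ultrafilter of character $\kappa$ has dual of cofinality exactly $\mathfrak{hl}$ and the easy argument is unavailable. Here I would construct a Halpern--Läuchli ultrafilter $\mathcal U$ by a transfinite recursion of length $\mathfrak c$. Enumerate all colourings $c\colon 2^{{<}\omega}\to 2$ as $\langle c_\xi:\xi<\mathfrak c\rangle$ and build an increasing chain of filters $\langle\mathcal F_\xi:\xi\le\mathfrak c\rangle$, with $\mathcal F_0$ the Fréchet filter and unions taken at limit stages, so that at each successor step $\mathcal F_{\xi+1}=\langle\mathcal F_\xi\cup\{A_\xi\}\rangle$ for some $A_\xi$ admitting a tree $p_\xi\in\mathbf S$ with $c_\xi$ constant on $p_\xi\restriction A_\xi$. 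Such an $A_\xi\in\mathcal F_\xi^+$ exists whenever $\mathcal F_\xi^+$ is a Halpern--Läuchli family, which by the definition of $\mathfrak{hl}$ holds as soon as $\cof(\mathcal F_\xi^*)<\kappa$. If the recursion goes through, then $\mathcal F:=\mathcal F_{\mathfrak c}$ is a filter that handles every colouring, hence a Halpern--Läuchli family; since every Halpern--Läuchli family is a reaping family, the filter $\mathcal F$ is a reaping family and therefore an ultrafilter, and so $\mathcal F$ is the desired Halpern--Läuchli ultrafilter.

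The main obstacle is keeping the invariant $\cof(\mathcal F_\xi^*)<\kappa$ alive through all $\mathfrak c$ stages, since $\mathfrak c$ may be much larger than $\kappa=\mathfrak{hl}$ and naively adjoining one new generator per colouring makes the character reach $|\xi|$. This is where the hypothesis $\mathfrak u\le\mathfrak{hl}$ is genuinely used: one fixes an ultrafilter $\mathcal W$ with $\chi(\mathcal W)=\mathfrak u=\kappa$ together with a base $\{W_\alpha:\alpha<\kappa\}$, and interleaves the colouring steps with consolidation steps in which the filter constructed so far is re-based over a small subfamily of $\{W_\alpha:\alpha<\kappa\}$ perturbed by a bounded amount of the accumulated data, exploiting the maximality of $\mathcal W$ to absorb the excess and keep every $\mathcal F_\xi$ of cofinality $<\kappa$. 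Verifying that the colouring requirement and this cofinality invariant are jointly satisfiable is the heart of the matter; note that when $\kappa=\mathfrak c$ the scheme collapses into the generic-existence proposition proved above, and an alternative for the general case would be to aim instead for a $\mathrm P$-point, whose dual is $\mathrm P^+$ and hence HL by Proposition~\ref{prop:P+}, at the price of having to produce such a point under the present hypothesis.
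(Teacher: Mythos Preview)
Your first case is fine, but your second case ($\mathfrak u=\mathfrak{hl}=:\kappa$) contains a genuine gap: the ``consolidation'' scheme you sketch---re-basing the partial filter over fragments of a fixed ultrafilter $\mathcal W$ to keep $\cof(\mathcal F_\xi^*)<\kappa$ while still hitting all $\mathfrak c$ many colourings---is not an argument, only a hope. You yourself flag that ``verifying that the colouring requirement and this cofinality invariant are jointly satisfiable is the heart of the matter,'' and then do not verify it. There is no evident mechanism by which maximality of $\mathcal W$ would let you absorb $\mathfrak c>\kappa$ many independent constraints into a filter of cofinality $<\kappa$, and the fallback suggestion of producing a P-point under the hypothesis $\mathfrak u\le\mathfrak{hl}$ is not known to be possible either.

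The paper sidesteps this entirely by splitting on a different dichotomy: $\mathfrak u<\mathfrak c$ versus $\mathfrak u=\mathfrak c$. If $\mathfrak u<\mathfrak c$, take an ultrafilter base of size $\mathfrak u$; this is a reaping family of size $<\mathfrak c$, hence Halpern--L\"auchli by Corollary~\ref{cor:small->ind} (which uses only that $[p]$ has size $\mathfrak c$, not the definition of $\mathfrak{hl}$). If $\mathfrak u=\mathfrak c$, then $\mathfrak c=\mathfrak u\le\mathfrak{hl}\le\mathfrak c$ (the upper bound holds since e.g.\ $\mathcal Z$ is non-HL with $\cof(\mathcal Z)\le\mathfrak c$), so $\mathfrak{hl}=\mathfrak c$ and the generic-existence proposition applies. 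The case that defeats your argument, namely $\mathfrak u=\mathfrak{hl}<\mathfrak c$, falls under the first branch of the paper's split and is handled in one line.
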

\begin{proof}
	If $\mathfrak u < \mathfrak c$, then the ultrafilter witnessing 
	this is Halpern--Läuchli by Corollary~\ref{cor:small->ind}. 
	Otherwise $\mathfrak u = \mathfrak{hl} = \mathfrak c$.
\end{proof}

Our original hope was that it might be the case that $\mathfrak u \leq \mathfrak{hl}$ is just 
a theorem of $\mathsf{ZFC}$ and Corollary~\ref{cor:HLexist} could give an absolute 
result on the existence of Halpern--Läuchli ultrafilters.
However, this turns out not to be the case.

Brendle and Flašková~\cite{Bendle-Flaskova}, and independently Hong and Zhang~\cite{Hong-Zhang-invariants} 
introduced the cardinal characteristic $\mathfrak{ge}(\mathcal I)$ 
of an ideal $\mathcal I$ called the \emph{generic existence number}.
They observed that $\mathfrak{ge}(\mathcal I) = \mathfrak{c}$ is equivalent 
to the generic existence of $\mathcal I$-ultrafilters.
\begin{definition}
	Let $\mathcal I$ be an ideal on $\omega$. 
	\[\mathfrak{ge}(\mathcal I) = \min\os \cof(\mathcal J) \mid \mathcal I \subseteq \mathcal J, 
	\mathcal J \text{ is a proper ideal on } \omega  \cs \]
\end{definition}
The cardinal has been also studied in~\cite{Guzman-Hrusak-pospisil} where it was called 
the \emph{exterior cofinality}.

The definition gives us that $\non^*(\mathcal I) \leq \mathfrak{ge}(\mathcal I) \leq \cof(\mathcal I)$ 
for each ideal $\mathcal I$. 
Note also that if $\mathcal I$ is not a HL ideal, then $\mathfrak{hl} \leq \mathfrak{ge}(\mathcal I)$. 
We can combine Theorem~\ref{thm:Z-isHL} with a result of Fremlin~\cite{Fremlin} 
that $\cof(\mathcal Z) = \cof( \mathscr N)$ to get:

\begin{corollary}
	\[\mathfrak{hl} \leq \mathfrak{ge}(\mathcal Z) \leq \cof(\mathscr N) \]
\end{corollary}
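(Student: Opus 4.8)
The plan is to read the asserted chain as three links and verify each, all of which are light given what has been assembled. The right-hand part, $\mathfrak{ge}(\mathcal Z) \leq \cof(\mathscr N)$, needs no new work: plugging $\mathcal J = \mathcal Z$ into the definition of $\mathfrak{ge}$ (a proper ideal extends itself) gives $\mathfrak{ge}(\mathcal Z) \leq \cof(\mathcal Z)$, and $\cof(\mathcal Z) = \cof(\mathscr N)$ is Fremlin's theorem~\cite{Fremlin}. So this link is just a matter of quoting these two facts.

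For the left-hand inequality $\mathfrak{hl} \leq \mathfrak{ge}(\mathcal Z)$, the idea is to show that \emph{every} proper ideal $\mathcal J$ with $\mathcal Z \subseteq \mathcal J$ is non-HL, so each such $\mathcal J$ contributes a cardinal $\cof(\mathcal J) \geq \mathfrak{hl}$ to the minimum defining $\mathfrak{ge}(\mathcal Z)$. To see that $\mathcal J$ is non-HL one observes that the inclusion $\mathcal Z \subseteq \mathcal J$ makes the identity map $\omega \to \omega$ a Katětov morphism witnessing $\mathcal Z \leq_{\mathrm K} \mathcal J$; were $\mathcal J$ a HL ideal, Lemma~\ref{lem:katetov-HLideals} would force $\mathcal Z$ to be HL too, contradicting Theorem~\ref{thm:Z-isHL}. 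Taking the minimum of $\cof(\mathcal J)$ over all such $\mathcal J$ then yields $\mathfrak{ge}(\mathcal Z) \geq \mathfrak{hl}$ — this is exactly the general remark recorded just before the corollary, specialized to $\mathcal I = \mathcal Z$.

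Assembling the links gives $\mathfrak{hl} \leq \mathfrak{ge}(\mathcal Z) \leq \cof(\mathcal Z) = \cof(\mathscr N)$, which is the statement. I do not anticipate a genuine obstacle: all the mathematical weight lies in results already in hand — Theorem~\ref{thm:Z-isHL} ($\mathcal Z$ is non-HL), Lemma~\ref{lem:katetov-HLideals} (downward transfer of HL along $\leq_{\mathrm K}$), and Fremlin's computation of $\cof(\mathcal Z)$ — and the corollary is their routine combination. The one point worth a sentence of care is that $\mathfrak{ge}(\mathcal Z)$ is a minimum over \emph{all} proper ideals $\mathcal J \supseteq \mathcal Z$, so one should note that the non-HL argument applies uniformly to each of them, which it does because the identity witnesses $\mathcal Z \leq_{\mathrm K} \mathcal J$ for every such $\mathcal J$.
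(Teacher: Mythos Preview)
Your proposal is correct and matches the paper's approach: the paper derives the corollary directly from the remark that $\mathfrak{hl} \leq \mathfrak{ge}(\mathcal I)$ for any non-HL ideal $\mathcal I$ (applied to $\mathcal I = \mathcal Z$ via Theorem~\ref{thm:Z-isHL}) together with Fremlin's $\cof(\mathcal Z) = \cof(\mathscr N)$. Your explicit unpacking of that remark via Lemma~\ref{lem:katetov-HLideals} and the identity map is exactly the argument behind it.
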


It is also known that consistently $\mathfrak{ge}(\mathcal Z) < \cof(\mathscr N)$, 
see~\cite{Bendle-Flaskova}.
Notice that $\omega_1 = \mathfrak{hl} = \cof(\mathscr N) < \mathfrak u$ does hold in the Silver model
and there are no P-ultrafilters~\cite{silver-model}. 

\begin{question}
	Do Halpern--Läuchli ultrafilters exist in the Silver model?
\end{question}

The existence of P-points in the random is currently an open 
question~\cite{silver-model,Dow-random,FB-Hrusak-corrigendum}. 
What about Sacks indestructible ultrafilters?

\begin{question}
	Do Halpern--Läuchli ultrafilters always exist in the random model?
\end{question}

Let us overview the properties of Halpern--Läuchli ultrafilters and compare them with P-ultrafilters.

\begin{itemize}
	\item Halpern--Läuchli ultrafilters exist generically iff $\mathfrak{hl} = \mathfrak{c}$.
	\item Let $\mathcal U$ be an ultrafilter such that $\chi(\mathcal U) < \mathfrak c$. 
	Then $\mathcal U$ is Halpern--Läuchli.
	\item If $\mathfrak u \leq \mathfrak{hl}$, then Halpern--Läuchli ultrafilters exist.
\end{itemize}

And for P-ultrafilters we have:

\begin{itemize}
	\item P-ultrafilters exist generically iff $\mathfrak{d} = \mathfrak{c}$~\cite{Bendle-Flaskova}.
	\item Let $\mathcal U$ be an ultrafilter such that $\chi(\mathcal U) < \mathfrak d$. 
	Then $\mathcal U$ is a P-ultra\-filter~\cite{Ketonen}.
\end{itemize}

A set $\mathcal X \subset 2^\omega$ is said to have property (s) (Marczewski (Szpilrajn)) 
if for every perfect tree $p \in \mathbf S$ there exists $q \in \mathbf S$, $q < p$ 
such that either $[q] \subseteq \mathcal X$ or $[p] \cap \mathcal X = \emptyset$. 
Ultrafilters with property~(s) were studied by Miller~\cite{Miller(s)}. 
Condition~(\ref{thm:HL:cond-Sacksind}) of Theorem~\ref{thm:HL-char} directly 
implies that every Halpern--Läuchli ultrafilter does have property~(s). 
Miller showed~\cite{Miller(s)} that property~(s) is (consistently) really weaker than Sacks indestructibility 
and under $\cov(\mathscr M) = \mathfrak c$ constructed a non-Halpern--Läuchli ultrafilter 
with property~(s).

\begin{question}[Miller~\cite{Miller(s)}]
	Do ultrafilters with property~(s) exist in~$\mathsf{ZFC}$?
\end{question}

Miller also asked about ultrafilters indestructible with iterated Sacks $\mathbf S \iter \mathbf S$ forcing 
and countable product of Sacks forcing $\mathbf S^\omega$. 

\begin{question}[Miller~\cite{Miller(s)}]
Is there any difference between Halpern--Läuchli ultrafilters and ultrafilters indestructible 
with $\mathbf S \iter \mathbf S$? What about $\mathbf S^\omega$?
\end{question}

Finally there are examples of destructible ultrafilters which do have some nice properties. 
Various $\nwd$-ultrafilters which are not $\mathcal Z$-ultrafilters, and hence not Halpern--Läuchli were 
constructed assuming $\mathsf{CH}$ e.g.\ by Hong and Zhang~\cite{Hong-Zhang-Relations}. 
This demonstrates that unlike being a P-point, these properties do not imply indestructibility.

\begin{theorem}
	If $\mathfrak p = \mathfrak d$, then there exists a non-Halpern--Läuchli Q-ultrafilter.
\end{theorem}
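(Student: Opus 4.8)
The plan is to construct a Q-ultrafilter $\mathcal U$ which is not a $\mathcal Z$-ultrafilter, that is, with $\mathcal Z\leq_{\mathrm K}\mathcal U^*$. By Theorem~\ref{thm:Z-isHL} the ideal $\mathcal Z$ is not HL, so Lemma~\ref{lem:katetov-HLideals} forces $\mathcal U^*$ to fail the HL property; this is exactly the (proof of the) corollary that every Halpern--Läuchli ultrafilter is a $\mathcal Z$-ultrafilter, read as a contrapositive, so such a $\mathcal U$ is automatically a non-Halpern--Läuchli Q-ultrafilter. Recall that $\mathcal Z\leq_{\mathrm K}\mathcal U^*$ means there is $f\colon\omega\to\omega$ with $f[U]\notin\mathcal Z$ for every $U\in\mathcal U$.

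The key structural point is the choice of $f$. A finite-to-one $f$ is useless if $\mathcal U$ is to be rare: one can then pick an interval partition whose blocks each absorb exponentially many $f$-fibres, and every selector $S$ of that partition satisfies $f[S]\in\mathcal Z$, so no selector of it can lie in $\mathcal U$. Instead I would fix a partition $\omega=\bigsqcup_{n}C_n$ into infinite \emph{spread-out} pieces — each $C_n$ meeting every sufficiently long dyadic interval $[2^k,2^{k+1})$, hence of positive lower density — and set $f(x)=n$ for $x\in C_n$. With this $f$, for any interval partition $E$ one can still build a selector $S$ with $f[S]=\omega$ (on each long block of $E$ let the single representative of $S$ fall into the next not-yet-hit $C_n$), so rarity is compatible with the requirement ``$f[F]\notin\mathcal Z$ for every $F$ in the filter base''.

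I would build $\mathcal U$ by transfinite recursion of length $\mathfrak c$ as an increasing chain of filter bases $\mathcal F_\xi$, closed under finite intersections and obeying the invariant $(\dagger)$: $f[F]\notin\mathcal Z$ for every $F\in\mathcal F_\xi$. Invariant $(\dagger)$ keeps each $\mathcal F_\xi$ a proper filter base and in the end yields $f^{-1}[\mathcal Z]\subseteq\mathcal U^*$. Three kinds of tasks are handled. \emph{(i) Deciding a set $X$}: since $\mathcal Z$ is an ideal, for each $F\in\mathcal F_\xi$ one of $f[F\cap X]$, $f[F\setminus X]$ is outside $\mathcal Z$, and a short argument using closure under finite intersections shows that in fact either $f[F\cap X]\notin\mathcal Z$ for all $F$, or $f[F\setminus X]\notin\mathcal Z$ for all $F$, so $X$ or $\omega\setminus X$ can be added preserving $(\dagger)$. \emph{(ii) Killing a density-zero set $A$}: add $\omega\setminus f^{-1}[A]$; since $f[F\setminus f^{-1}[A]]\supseteq f[F]\setminus A\notin\mathcal Z$, this preserves $(\dagger)$. \emph{(iii) Absorbing a selector for an interval partition $E$}: produce a selector $S$ of $E$ with $f[F\cap S]\notin\mathcal Z$ for every $F\in\mathcal F_\xi$. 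Tasks (i) and (ii) preserve $(\dagger)$ with no cardinal hypothesis and survive unions, so I would schedule them after running, in the first $\mathfrak d$ stages, all of task (iii) for a family $\{E_\alpha:\alpha<\mathfrak d\}$ of interval partitions cofinal in the coarsening ordering (every interval partition refines some $E_\alpha$, so its selectors include a selector of $E_\alpha$, which then lies in $\mathcal U$). The resulting $\mathcal U$ is a Q-ultrafilter with $\mathcal Z\leq_{\mathrm K}\mathcal U^*$.

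The hypothesis $\mathfrak p=\mathfrak d$ is used precisely to execute task (iii): since it is scheduled in the first $\mathfrak d$ stages, the base $\mathcal F_\xi$ is generated by fewer than $\mathfrak d=\mathfrak p$ sets, so that generating family has a pseudo-intersection; combining this with a $\leq^*$-dominating scale of length $\mathfrak d$ (available since $\mathfrak b=\mathfrak d$) to diagonalise against $E$, and using the spread-out structure of $f$ to route the representatives of $S$ through $F\cap C_n$ for slowly growing $n$, one arranges $f[F\cap S]$ to have positive upper density for all of the ${<}\mathfrak p$ many $F$ simultaneously. Carrying out this simultaneous bookkeeping — the pseudo-intersection, the domination against $E$, and the density requirement against every member of $\mathcal F_\xi$ at once — is the technical heart of the argument and the step I expect to be the main obstacle; once it is in place, the remaining $\mathfrak c$ stages (tasks (i) and (ii)) are routine and require no further combinatorics.
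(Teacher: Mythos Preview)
Your outline differs substantially from the paper's argument, and the step you flag as ``the main obstacle'' is a genuine gap that I do not see how to close along the lines you indicate.

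The paper does \emph{not} try to push $\mathcal Z$ Katětov-below the dual ideal. Instead it follows Yiparaki: it builds a single explicit coloring $c\colon 2^{<\omega}\to 2$ (using a system of pairwise disjoint infinite sets $\{A_x : x\in X\}$ indexed by the countable set $X$ of pair-partitions of the levels), and then constructs a rare filter $\mathcal F$ with the much softer invariant that every $A_x$ stays $\mathcal F$-positive. That invariant is easy to maintain: at each stage one needs a selector $S$ for the next interval partition with $S\cap F\cap A_x$ infinite for all $F$ in the current base and all of the \emph{countably many} $x$. Taking, for each $x$, a pseudo-intersection of $\{F\cap A_x : F\in\mathcal F_\xi\}$ and diagonalising over $x$ suffices. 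Finally one checks that for any finite $P\subset\mathbf S$ there is $x$ with $A_x\cap\bigcup_{p\in P}H_c(p)=\emptyset$, so $\mathcal F\cup\mathcal I_c^*$ generates a proper filter; any ultrafilter extending it is a Q-point disjoint from $\mathcal I_c$, hence not HL.

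Your route replaces this finitary, countable bookkeeping by a density condition, and that is exactly where task~(iii) breaks down. You propose to take a pseudo-intersection $P$ of the ${<}\mathfrak p$ generators and then build a selector $S$ with $f[P\cap S]\notin\mathcal Z$; since $P\cap S\subseteq^* F\cap S$ one would get $f[F\cap S]\notin\mathcal Z$ for every $F$. But nothing prevents $f[P]\in\mathcal Z$: if the generators $F_k$ are $\subseteq$-decreasing with $\bar d\bigl(f[F_k]\bigr)\to 0$ (a situation your earlier stages do not rule out), then every pseudo-intersection $P$ satisfies $f[P]\subseteq^* f[F_k]$ for all $k$, hence $f[P]\in\mathcal Z$, and then $f[P\cap S]\subseteq f[P]\in\mathcal Z$ for every selector $S$. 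So the pseudo-intersection trick does not deliver the selector you need, and the vague ``route the representatives of $S$ through $F\cap C_n$ for slowly growing $n$'' does not explain how to meet ${<}\mathfrak p$ many density requirements \emph{simultaneously} when the sets $f[F]$ may have upper densities tending to $0$ along the base. Strengthening the invariant to a fixed lower bound on $\bar d\bigl(f[F]\bigr)$ does not survive task~(i), since upper density is only subadditive.

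In short: tasks~(i) and~(ii) are fine, but task~(iii) as stated is not just a technicality --- it is the whole argument, and your sketch does not supply the missing idea. The paper's use of a hand-crafted $\mathcal I_c$ rather than $\mathcal Z$ is precisely what makes the inductive step go through.
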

\begin{proof}
	We will show that the construction of a non-Halpern--Läuchli ultrafilter due to 
	Yiparaki~\cite{Yiparaki} can be also used to produce a Q-ultrafilter. 
	For $n\in \omega$ let $X_n$ be the set of all partitions of $2^n$ into sets of size $2$ 
	and let $X = \bigcup \os X_n \mid n \in \omega\cs$. 
	Choose a system of pairwise disjoint infinite sets $\os A_x \mid x \in X \cs \subset {[\omega]}^\omega$, 
	such that $\min A_x \geq n$ for $x \in X_n$. 
	Define $c \colon 2^{{<}\omega} \to 2$ as follows. 
	If $t \in 2^k$, $k \in A_x$, $x \in X_n$, $t \restriction n = s_i \in  b = \os s_0, s_1 \cs \in x$, 
	and $s_0 <_{\mathrm{lex}} s_1$, then
	$c(t) = i$.
	Otherwise define $c(t)$ arbitrarily.

	Next we fix a system $\os D_\alpha \mid \alpha \in \mathfrak d \cs$ of interval partitions of $\omega$ 
	which is dominating, i.e.\ for every interval partition $E$ of $\omega$ there is $\alpha \in \mathfrak d$
	such that each element of $E$ intersects at most two intervals of $D_\alpha$, see e.g.~\cite{Blass-cardinals}. 
	Using the fact that $\mathfrak p = \mathfrak d$ we can recursively in $\mathfrak p$ many steps 
	construct a filter $\mathcal F$ which contains a selector for every $D_\alpha$, $\alpha \in \mathfrak d$ 
	and such that $\os A_x \mid x \in X \cs \subseteq \mathcal F^+$. 
	The filter $\mathcal F$ is a rare filter and every ultrafilter extending $\mathcal F$ 
	is a Q-ultrafilter. 

	We claim that $\mathcal F \cup \mathcal I_c^*$ generates proper filter, 
	i.e.\ for every for 
	$P \in {\mathbf S}^{{<}\omega}$
	is $\bigcup \os H_c(p) \mid p \in P \cs \notin \mathcal F$.
	Since $P$ is finite, there is $n \in \omega$ and $x \in X_n$ 
	such that for each $p \in P$ there is $b \in x$, $b \subset p$.
	Notice that the way $c$ was defined guarantees that 
	for every $k \in A_x$, if $b \in x \in X_n$, $b \subset p$, 
	then there are $t_0, t_1 \in p \cap 2^k$, $b = \os t_0\restriction n, t_1 \restriction n\cs$ 
	and consequently $c(t_0) \neq c(t_1)$, and in particular $k \notin H_c(p)$.
	We got $A_x \cap \bigcup \os H_c(p) \mid p \in P \cs = \emptyset$, 
	and $A_x \in \mathcal F^+$ gives us the desired conclusion.

	Every ultrafilter extending $\mathcal F \cup \mathcal I_c^*$ is a non-Halpern--Läuchli 
	Q-ultrafilter as it is disjoint with $\mathcal I_c$.
\end{proof}

\newpage
\bibliography{references}
\bibliographystyle{plain}

\end{document}